\theoremstyle{definition}
\newtheorem{definition}{Definition}[section]
\newtheorem{proposition}{Proposition}[definition]
\newtheorem{corollary}{Corollary}[definition] 
\newtheorem{lemma}{Lemma}[definition]
\newtheorem{oss}{Remark}[definition]
\newlist{nicenum}{enumerate}{1}
\setlist[nicenum]{%
  label=\textbf{\arabic*.},%
  left=1em,%
  itemsep=0.5ex,%
  topsep=1ex%
}
\begin{document}
\begin{center}
\begin{Large}
On generalized metric structures
\end{Large}
\end{center}
\begin{center}
Andrea Ricciarini
\end{center}
\vspace{0.2cm}
\begin{center}
\textbf{Abstract}
\end{center}
 Let $M$ be a smooth manifold, let $TM$ be its tangent bundle and $T^{*}M$ its cotangent bundle. This paper investigates integrability conditions for generalized metrics, generalized almost para-complex structures, and generalized Hermitian structures on the generalized tangent bundle of $M$, $E=TM \oplus T^{*}M$. In particular, two notions of integrability are considered: integrability with respect to the Courant bracket and integrability with respect to the bracket induced by an affine connection. We give sufficient criteria that guarantee the integrability for the aforementioned generalized structures, formulated in terms of properties of the associated $2$-form and connection. Extensions to the pseudo-Riemannian setting and consequences for generalized Hermitian and K\"ahler  structures are also discussed.
We also describe relationship between generalized metrics and weak metric structures studied in \cite{12}.
\\ \\ \begin{scriptsize}Keywords: generalized geometry, generalized metrics and connections, generalized K\"ahler structures, weak metric structures. \end{scriptsize}
\\ \begin{scriptsize}2020 MSC:  53C05   53C15   53D18 \end{scriptsize}
\section{Introduction}
Generalized geometry, introduced by Hitchin in \cite{8} and subsequently developed by Gualtieri in \cite{7}, naturally combines the structures of the tangent and cotangent bundles and provides a unified language to study geometric structures that unite symplectic and complex geometry. In this paper we concentrate primarily on three fundamental objects in generalized geometry: generalized metrics, generalized almost para-complex structures, and generalized Hermitian structures. We introduce the concept of integrability of a generalized metric and we analyze their integrability not only with respect to the standard Courant bracket $[\cdot,\cdot]_{C}$ but also with respect to the bracket induced by an affine connection $[\cdot,\cdot]_{\nabla}$.
\\ Chapters $5,6,7,8$ contain the original contributions of the paper, where we introduce sufficient conditions and structural criteria that relate properties of the $2$-form and connection associated to a generalized metric $G$ to the integrability of the structures under consideration, both with respect to $[\cdot,\cdot]_{C}$ and $[\cdot,\cdot]_{\nabla}$.
In particular, the paper is organized as follows.
\\ Chapter $2$ introduces the fundamental notions of generalized geometry, following the basic definitions and principal results given in \cite{7,8,9}.
Chapters $3$ and $4$ develop an in-depth and original study of generalized metrics and generalized connections. After a systematic analysis of the principal properties of the connection induced by a generalized metric, these results are applied in Chapter $5$ to the study of integrability of generalized metrics with respect to both the Courant bracket and the bracket induced by the connection associated with the generalized metric (the main statements appear as Propositions 5.1.1, 5.1.2, 5.2.1, 5.2.2). In Chapter $6$ we consider generalized metrics induced by pseudo-Riemannian metrics: in this context we analyze the principal properties of generalized almost para-complex structures and prove that their integrability does not depend on the signature of the underlying metric; we also study the integrability of the generalized almost complex structure induced by a generalized metric arising from a pseudo-Riemannian metric.
In Chapter $7$ we examine certain weak structures associated to generalized metrics, such as weak nearly Kähler structures. Finally Chapter $8$ is devoted to the original study of integrability for generalized Hermitian structures, both with respect to $[\cdot,\cdot]_{C}$ and $[\cdot,\cdot]_{\nabla}$. We then derive sufficient conditions under which a generalized Hermitian structure is a generalized Kähler structure.
\\ This article is partially based upon the author's master's thesis \cite{11}.

\section{Preliminaries}
Let $M$ be a smooth manifold, let $TM$ be its tangent bundle and $T^{*}M$ its cotangent bundle. In what follows we shall denote the smooth sections of $TM$ by $C^{\infty}(TM)$, of $T^{*}M$ by $C^{\infty}(T^{*}M)$.
\subsection{Geometry of the generalized tangent bundle}
\begin{definition}
\cite{7,8} Let $M$ be a smooth manifold. The \emph{generalized tangent bundle} is the vector bundle over $M$
$$E:=TM \oplus T^{*}M.$$
Sections of $E$, $C^{\infty}(E)$, are written as $X+\xi $ with $X \in C^{\infty}(TM)$, $\xi \in C^{\infty}(T^{*}M)$.
\\ $E$ carries the natural symmetric pairing of neutral signature
$$\langle X+\xi ,Y+\eta \rangle :=\iota _{X}\eta +\iota _{Y}\xi .$$
The projection $\pi:E\rightarrow TM $ is the projection on the first summand. 
\\ The subbundles $TM$ and $T^{*}M$ are isotropic for $\langle \cdot ,\cdot \rangle $.
\end{definition}
\begin{definition}
\cite{7,8} The \emph{Dorfman bracket} is the bilinear operation on $C^{\infty}(E)$, $[\cdot ,\cdot ]_{D}$, defined by:
$$[X+\xi ,Y+\eta ]_{D}:=[X,Y]+\mathcal{L}_{X}\eta -\iota _{Y}d\xi ,$$
where $[X,Y]$ is the Lie bracket and $\mathcal{L}$ the Lie derivative. 
\\ The Dorfman bracket satisfies the Jacobi rule:
$$[e_{1},[e_{2},e_{3}]_{D}]_{D}=[[e_{1},e_{2}]_{D},e_{3}]_{D}+[e_{2},[e_{1},e_{3}]_{D}]_{D}$$
$\forall e_{1},e_{2},e_{3} \in C^{\infty}(E)$. The Dorfman bracket is not skew-symmetric.
\end{definition}
\begin{definition}
\cite{7,8} The \emph{Courant bracket}, $[\cdot ,\cdot ]_{C}$, is the skew-symmetrization of the Dorfman bracket:
$$[e_{1},e_{2}]_{C}:=\frac{1}{2}([e_{1},e_{2}]_{D}-[e_{2},e_{1}]_{D}).$$
It is skew and satisfies the Jacobi identity up to an exact term controlled by the pairing:
$$\sum _{cyc}[[e_{1},e_{2}]_{C},e_{3}]_{C}=\frac{1}{3}d\left(\sum _{cyc} \langle [e_{1},e_{2}]_{C},e_{3}\rangle \right) ,$$
where cyc means cyclic permutations.
\end{definition}
\begin{definition} 
$\cite{1,7}$ Let $b \in \Omega^{2}(TM)$ be a $2$-form. The \emph{$b$-transform} of $E$ is the $\langle \cdot ,\cdot \rangle $-orthogonal automorphism of $E$ defined as:
$$e^{b}(X+\xi )=X+\xi +\iota _{X}b$$
for all $X+\xi \in C^{\infty}(E)$. 
\\ If, in addition, $b$ is $d$-closed, then $e^{b}$ preserves the Courant bracket and is referred to as \emph{$b$-field transform}.
\end{definition}
\begin{definition} 
\cite{7,8} Let $H \in \Omega ^{3}(TM)$ be a given $3$-form. The \emph{H-twisted Dorfman bracket} on $C^{\infty}(E)$, $[\cdot ,\cdot ]_{D}^{H}$, is defined by:
$$[X+\xi ,Y+\eta ]_{D}^{H}=[X,Y]+\mathcal{L}_{X}\eta -\iota _{Y}d\xi +\iota _{X}\iota _{Y}H,$$
for $X+\xi ,Y+\eta \in C^{\infty}(E)$.
\\ The $H$-twisted Courant bracket, $[\cdot ,\cdot ]_{C}^{H}$, is the skew-symmetrization of the $H$-twisted Dorfman bracket:
$$[X+\xi ,Y+\eta ]_{C}^{H}=\frac{1}{2}([X+\xi ,Y+\eta ]_{D}^{H}-[Y+\eta ,X+\xi ]_{D}^{H})=$$
$$=[X,Y]+\frac{1}{2}(\mathcal{L}_{X}\eta -\mathcal{L}_{Y}\xi )-\frac{1}{2}d(\iota _{X}\eta -\iota _{Y}\xi )+\iota _{X}\iota _{Y}H.$$
The twisted brackets preserve the canonical pairing $\langle \cdot ,\cdot \rangle $ and are equivariant under $b$-field transforms.
\end{definition}
\begin{definition}
\cite{9} Given an affine connection $\nabla $ on $TM$, it is possible to define the \emph{connection-induced bracket} on $C^{\infty}(E)$, $[\cdot ,\cdot ]_{\nabla}$, as:
$$[X+\xi ,Y+\eta ]_{\nabla}=[X,Y]+\nabla _{X}\eta -\nabla _{Y}\xi .$$
This bracket is skew-symmetric and it satisfies the Jacobi identity if and only if the curvature of $\nabla $ vanishes.
\end{definition}

\subsection{Generalized complex structures}
\begin{definition}
\cite{7,8} A \emph{generalized almost complex structure à la Hitchin} is an endomorphism $\mathfrak{J}:E \rightarrow E$ such that:
$$ \mathfrak{J}^{2}=-Id, \hspace{0.3cm} \langle \mathfrak{J}v,\mathfrak{J}w \rangle =\langle v,w \rangle \hspace{0.25cm} \forall v,w \in C^{\infty}(E).$$
Equivalently, the $+i$-eigenbundle $V_{i}\subset E\otimes \mathbb{C}$ is a maximal isotropic subbundle. The structure $\mathfrak{J}$ is integrable if and only if $V_{i}$ is closed under the Courant bracket (i.e. $V_{i}$ is involutive). 
\\ In the pure spinor formulation, integrability is equivalent, locally, to the existence of a nonzero pure spinor $\rho$ generating $(detV_{i})^{\perp}$ with $d\rho =0$. A generalized almost complex structure integrable with respect to $[\cdot,\cdot]_{C}$ is called a \emph{generalized complex structure}.
\end{definition}
\begin{definition}
Let $\mathfrak{J}$ be an endomorphism of $E$ and let $[\cdot,\cdot]$ be a bilinear bracket on $C^{\infty}(E)$. We define the \emph{Nijenhuis operator} of $\mathfrak{J}$ with respect to $[\cdot,\cdot]$ as the $\mathbb{R}$-bilinear map:
$$N_{\mathfrak{J}}:C^{\infty}(E)\times C^{\infty}(E) \rightarrow C^{\infty}(E)$$
given by the formula:
$$N_{\mathfrak{J}}(v,w)=[\mathfrak{J}v,\mathfrak{J}w]-\mathfrak{J}[\mathfrak{J}v,w]-\mathfrak{J}[v,\mathfrak{J}w]+\mathfrak{J}^{2}[v,w]$$
for all $v,w \in C^{\infty}(E)$.
\end{definition}
\begin{oss}
$N_{\mathfrak{J}}$ defined above is an operator. Whether one calls it a tensor depends on extra properties of the bracket and of $\mathfrak{J}$.
\end{oss}
\begin{definition}
\cite{9} In the most general sense, a \emph{generalized almost complex structure} on $M$ is an almost complex structure $\mathsf{J}$ on $E$, that is, an endomorphism $\mathsf{J}:E \rightarrow E$ such that $\mathsf{J}^{2}=-Id$.
\end{definition}
\begin{oss}
\cite{4,5} Generalized almost complex structures à la Hitchin are referred to in the literature as \emph{strong}, whereas those just introduced are called \emph{weak}.
\end{oss}
\begin{oss}
In general, one cannot speak of the Nijenhuis tensor with respect to the Courant bracket for weak generalized almost complex structures, since such structures are not a priori compatible with the natural pairing $\langle \cdot ,\cdot \rangle $.
\\ However, one can still speak of the Nijenhuis tensor with respect to the bracket $[\cdot,\cdot]_{\nabla}$. 
\\ For strong generalized almost complex structures, the Nijenhuis operator is indeed a tensor with respect to both the Courant bracket $[\cdot,\cdot]_{C}$ and the bracket $[\cdot,\cdot]_{\nabla}$, because in this case the structures are compatible with the pairing $\langle \cdot ,\cdot \rangle$.
\\ So a weak (or strong) generalized almost complex structure $\mathsf{J}$ is said to be \emph{$[\cdot,\cdot]_{\nabla }$-integrable} if its Nijenhuis tensor with respect to $[\cdot,\cdot]_{\nabla }$, $N_{\mathsf{J}}^{[\cdot,\cdot]_{\nabla }}$, vanishes.
\end{oss}
\begin{proposition}
Let $\mathfrak{J}$ be a strong generalized almost complex structure. Then the following results hold:
\begin{nicenum}[label=\roman*.]
\item The Nijenhuis tensor of $\mathfrak{J}$ with respect  to $[\cdot,\cdot]_{C}$, $N_{\mathfrak{J}}$, vanishes if and only if the $+i$-eigenspace $V_{i}$ of $\mathfrak{J}$ is $[\cdot,\cdot]_{C}$-involutive.
\item The Nijenhuis tensor of $\mathfrak{J}$ with respect  to $[\cdot,\cdot]_{\nabla}$, $N_{\mathfrak{J}}^{[\cdot,\cdot]_{\nabla}}$, vanishes if and only if the $\pm i$-eigenspace $V_{\pm i}$ of $\mathfrak{J}$ are both $[\cdot,\cdot]_{\nabla }$-involutive (the same holds for weak structures).
\end{nicenum}
\end{proposition}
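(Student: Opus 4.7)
The strategy I would use is the standard eigenspace decomposition argument adapted to the generalized setting. First I would extend $\mathfrak{J}$ and the bracket $[\cdot,\cdot]$ (standing for either $[\cdot,\cdot]_{C}$ in (i) or $[\cdot,\cdot]_{\nabla}$ in (ii)) complex-bilinearly to $E\otimes\mathbb{C}$, and decompose any section according to $E\otimes\mathbb{C}=V_{i}\oplus V_{-i}$. The projectors onto the eigenbundles are $\pi_{\pm}=\tfrac{1}{2}(\mathrm{Id}\mp i\mathfrak{J})$, so that involutivity of $V_{\pm i}$ is equivalent to $\pi_{\mp}[u,v]=0$ for all sections $u,v$ of $V_{\pm i}$.

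The next step is to evaluate $N_{\mathfrak{J}}$ on pure pairs. Substituting $\mathfrak{J}u=\epsilon u$ and $\mathfrak{J}v=\delta v$ with $\epsilon,\delta\in\{+i,-i\}$ into the defining formula gives
$$N_{\mathfrak{J}}(u,v)=(\epsilon\delta-1)[u,v]-(\epsilon+\delta)\,\mathfrak{J}[u,v],$$
which simplifies to $-4\,\pi_{-\epsilon}[u,v]$ when $\epsilon=\delta$ and to $0$ when $\epsilon=-\delta$. Consequently $N_{\mathfrak{J}}$ vanishes identically if and only if both $V_{i}$ and $V_{-i}$ are closed under $[\cdot,\cdot]$. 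This already proves (ii); the computation only uses $\mathfrak{J}^{2}=-\mathrm{Id}$ together with $\mathbb{R}$-bilinearity of $[\cdot,\cdot]_{\nabla}$, so it makes no appeal to compatibility of $\mathfrak{J}$ with $\langle\cdot,\cdot\rangle$, which is exactly why the statement persists for weak structures.

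For (i) I would then invoke the fact that the Courant bracket is real, i.e.\ commutes with complex conjugation on $E\otimes\mathbb{C}$, and that $V_{-i}=\overline{V_{i}}$. Taken together these imply that $V_{i}$ is $[\cdot,\cdot]_{C}$-involutive if and only if $V_{-i}$ is, so the one-sided formulation in (i) is equivalent to the two-sided characterization derived above, and the conclusion follows.

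The one delicate point is not the algebra but the passage from the section-level identity to a statement about the subbundles $V_{\pm i}$: this requires $N_{\mathfrak{J}}$ to be tensorial, so that its vanishing at a point is well-defined and involutivity can be read off fibrewise. For (i) this is secured precisely by the \emph{strong} hypothesis, since (as noted in the preceding remark) the orthogonality of $\mathfrak{J}$ with respect to $\langle\cdot,\cdot\rangle$ is what promotes $N_{\mathfrak{J}}$ from an operator to a tensor with respect to $[\cdot,\cdot]_{C}$; for (ii) the corresponding tensoriality for $[\cdot,\cdot]_{\nabla}$ is automatic and does not require the strong hypothesis, justifying the parenthetical extension to weak structures.
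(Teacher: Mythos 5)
Your proposal is correct and takes essentially the same route as the paper: substitute eigensections into the Nijenhuis formula to obtain the pure-pair identities (the case $\epsilon=\delta=i$ is exactly the paper's computation $N_{\mathfrak{J}}(v,w)=-2[v,w]_{C}-2i\mathfrak{J}[v,w]_{C}$), and then use the symmetry between $V_{i}$ and $V_{-i}$ to handle the Courant case, the eigenbundle decomposition with the projectors $\tfrac{1}{2}(\mathrm{Id}\mp i\mathfrak{J})$ making the paper's ``direct computations'' for the converse explicit. The only minor deviations are that you deduce involutivity of $V_{-i}$ from that of $V_{i}$ via reality of the Courant bracket and $V_{-i}=\overline{V_{i}}$, whereas the paper appeals to compatibility of $[\cdot,\cdot]_{C}$ with $\langle\cdot,\cdot\rangle$ (both work), and your closing tensoriality caveat is harmless but not actually needed, since the equivalence is a statement about sections on both sides.
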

\begin{proof}
First of all, observe that the Courant bracket $[\cdot,\cdot]_{C}$ is compatible with the natural pairing $\langle \cdot , \cdot \rangle$ whereas the bracket $[\cdot,\cdot]_{\nabla }$ is not. Therefore, if $V_{i}$ and $V_{-i}$ denote the orthogonal eigenbundles with respect to $\langle \cdot ,\cdot \rangle $ one can conclude that if $V_{i}$ is $[\cdot,\cdot]_{C}$-involutive, then $V_{-i}$ is also $[\cdot,\cdot]_{C}$-involutive. However, the same conclusion does not hold in general when $V_{i}$ is $[\cdot,\cdot]_{\nabla}$-involutive.
\\ $i$ $"\rightarrow "$ Let $v,w \in C^{\infty}(V_{i})$, and observe that 
$$0=N_{\mathfrak{J}}(v,w)=-2[v,w]_{C}-2i \mathfrak{J}[v,w]_{C}.$$
Hence, we have that $[v,w]_{C}=i[v,w]_{C}$ and so $[v,w]_{C} \in C^{\infty}(V_{i})$.
\\ $i$ $"\leftarrow "$ Since $V_{i}$ is $[\cdot,\cdot]_{C}$-involutive, $V_{-i}$ is also $[\cdot,\cdot]_{C}$-involutive, and this allows us to conclude, through direct computations, that $N_{\mathfrak{J}}$ vanishes.
\\ Analogous arguments hold for $ii$, taking into account the hypothesis and the observations made at the beginning of the proof.
\end{proof}
\hspace{-0.6cm} From now on, we will omit the term 'weak'.
\begin{proposition}
\cite{9} Let $\mathsf{J}=\left[ \begin{matrix} H && \alpha \\ \beta && K \end{matrix}\right]$ be an endomorphism of $E$ where \\ $H:TM\rightarrow TM$, $\alpha :T^*M\rightarrow TM$, $\beta :TM\rightarrow T^*M$, $K:T^*M\rightarrow TM$. Then $\mathsf{J}$ is a generalized almost complex structure if and only if the following conditions hold: 
$$\begin{cases} \beta \alpha =-(I+K^2) \\ \alpha \beta =-(I+H^2) \\ H\alpha =-\alpha K \\ \beta H=-K\beta \end{cases}.$$ 
Moreover $\mathsf{J}$ is invariant with respect to the scalar product $\langle \cdot ,\cdot \rangle $, i.e. it is strong, if and only if, in addition to the above conditions, the following also hold:
$$\begin{cases} K=-H^*\\ \alpha =-\alpha ^* \\ \beta =-\beta ^* \end{cases}$$
where $H^*:T^*M\rightarrow T^*M$ is the dual of $H$ defined by $H^*(\xi )(X)=\xi (H(X))$. The condition $\alpha =-\alpha ^*$ means that $\alpha (\xi )(\eta )=-\alpha (\eta )(\xi )$ for all $\xi ,\eta \in T^*M$ while $\beta = -\beta ^*$ means that $\beta (X)(Y)=-\beta (Y)(X)$ for all $X,Y \in C^{\infty}(TM)$. 
\end{proposition}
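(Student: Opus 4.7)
The plan is to verify both assertions by direct block-matrix computation, with the endomorphism $\mathsf{J}$ written in the fixed splitting $E=TM\oplus T^{*}M$.

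For the first claim I would simply expand
$$\mathsf{J}^{2}=\begin{pmatrix} H & \alpha \\ \beta & K \end{pmatrix}\begin{pmatrix} H & \alpha \\ \beta & K \end{pmatrix}=\begin{pmatrix} H^{2}+\alpha\beta & H\alpha+\alpha K \\ \beta H+K\beta & \beta\alpha+K^{2} \end{pmatrix}$$
and impose $\mathsf{J}^{2}=-\mathrm{Id}$. Reading off each of the four blocks produces exactly the four identities listed, and since every step is reversible this takes care of both directions at once.

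For the second claim my approach is to encode the natural pairing as a bilinear form with Gram matrix $B=\begin{pmatrix} 0 & I \\ I & 0 \end{pmatrix}$ under the same splitting, which recovers $\langle X+\xi,Y+\eta\rangle=\iota_{X}\eta+\iota_{Y}\xi$. Preservation of the pairing then becomes the matrix identity $\mathsf{J}^{*}B\mathsf{J}=B$. Since the first part of the proposition is already assumed, $\mathsf{J}^{2}=-\mathrm{Id}$ and so $\mathsf{J}^{-1}=-\mathsf{J}$; substituting this gives the equivalent condition $\mathsf{J}^{*}B=-B\mathsf{J}$. Expanding both sides blockwise, using the canonical identification $T^{**}M\cong TM$, one gets
$$\mathsf{J}^{*}B=\begin{pmatrix} \beta^{*} & H^{*} \\ K^{*} & \alpha^{*} \end{pmatrix},\qquad -B\mathsf{J}=\begin{pmatrix} -\beta & -K \\ -H & -\alpha \end{pmatrix},$$
and matching the four entries yields $\beta^{*}=-\beta$, $H^{*}=-K$, $K^{*}=-H$ and $\alpha^{*}=-\alpha$. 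The third identity is the dual of the second, so the independent content is precisely $K=-H^{*}$, $\alpha=-\alpha^{*}$, $\beta=-\beta^{*}$. The converse is the same calculation run in the other direction: from the three skew/adjoint identities the blockwise matching above gives $\mathsf{J}^{*}B=-B\mathsf{J}$, and multiplying on the right by $\mathsf{J}$ together with $\mathsf{J}^{2}=-\mathrm{Id}$ recovers $\mathsf{J}^{*}B\mathsf{J}=B$.

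The only genuine point of care is bookkeeping: the four blocks of $\mathsf{J}^{*}B$ live in distinct Hom-spaces, so each identity must be checked to be well typed, and one should verify that the new skew-symmetry conditions are compatible with the constraints already produced by $\mathsf{J}^{2}=-\mathrm{Id}$. A quick consistency test is that $\beta H=-K\beta$ combined with $K=-H^{*}$ gives $\beta H=H^{*}\beta$, which is exactly what one obtains by dualising $\beta=-\beta^{*}$ and inserting $H$; analogous checks hold for $\alpha$ and for $K$. Beyond this routine checking there is no real obstacle.
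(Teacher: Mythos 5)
Your proof is correct. Note that the paper itself gives no argument for this proposition --- it is quoted from reference [9] without proof --- so your direct block-matrix computation is exactly the kind of verification that is missing, and it checks out: expanding $\mathsf{J}^{2}=-\mathrm{Id}$ blockwise gives the four algebraic relations, and encoding the pairing by the Gram matrix $B=\left(\begin{smallmatrix}0 & I\\ I & 0\end{smallmatrix}\right)$, using $\mathsf{J}^{-1}=-\mathsf{J}$ to rewrite $\mathsf{J}^{*}B\mathsf{J}=B$ as $\mathsf{J}^{*}B=-B\mathsf{J}$, and matching blocks yields $\beta^{*}=-\beta$, $\alpha^{*}=-\alpha$, $H^{*}=-K$ together with its dual $K^{*}=-H$, which is redundant as you say; the converse is the same identity multiplied back by $\mathsf{J}$. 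Two small remarks: the overall normalization of the pairing (with or without the factor $\tfrac12$ the paper uses elsewhere) is irrelevant, as your argument only uses $B$ up to scale; and the typing issue you flag is real but lies in the paper's statement rather than in your proof --- for the block decomposition to make sense one must read $K:T^{*}M\rightarrow T^{*}M$ (the statement's ``$K:T^{*}M\rightarrow TM$'' is a typo), and your computation implicitly and correctly uses this.
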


\section{Generalized metrics}
\begin{definition}
\cite{7} Let $E$ be the generalized tangent bundle of $M$. A \emph{generalized metric} $V$ on $E$ is a positive-definite subbundle of $E$ with respect to the natural pairing $\langle \cdot ,\cdot \rangle$, of rank equal to $dim_{\mathbb{R}}M$.
\end{definition}
\begin{oss}
Given a generalized metric $V$ on $E$, we define $V^{+}:=V$ and $V^{-}:=V^{\perp}$, the orthogonal complement of $V$ in $E$ with respect to $\langle \cdot , \cdot \rangle $.
\end{oss}
\begin{oss}
A generalized metric $V^{+}$ can also be obtained by means of an endomorphism $G:E \rightarrow E$ defined as $G|_{V^{+}}=Id$ and $G|_{V^{-}}=-Id$. So $G$ satisfies:
\begin{nicenum}[label=\roman*.]
\item $G^{2}=Id.$
\item $\langle Gu,Gv \rangle =\langle u,v \rangle \hspace{0.3cm} \forall u,v \in C^{\infty}(E)$
\item The bilinear form $(u,v)\mapsto \langle Gu,v \rangle $ is positive definite on $V^{+}$ and negative definite on $V^{-}$.
\item Its $\pm 1$-eigenbundles have the same rank.
\end{nicenum}
In this way, $V^{\pm}$ are the $\pm 1$-eigenbundles of $G$, and therefore defining $V^{+}$ is equivalent to specifying $G$.
\end{oss}
\begin{oss}
\cite{1,7} A generalized metric is determined by a pair $(g,b)$ consisting of a Riemannian metric $g$ and a 2-form $b$ on $M$. Indeed, $V^{+}$ is the graph of $g+b$: $$ V^{+}=\{X+(g+b)X|X\in C^{\infty}(TM)\}$$ and the endomorphism $G$ can be written as follows:
$$G=e^{-b}G_{g}e^{b}=e^{-b}\left( 
\begin{matrix} 0 && g^{-1} \\ g && 0 \end{matrix}\right) e^{b} =
\left( \begin{matrix} 
-g^{-1}b && g^{-1} \\ 
g-bg^{-1}b && bg^{-1} 
\end{matrix}\right)$$
where $e^{b}: X+\xi \mapsto X+\xi +\iota _{X}b$ and $G_{g}=\left( \begin{matrix} 0 && g^{-1} \\ g && 0 \end{matrix} \right).$
\end{oss}
\hspace{-0.6cm} In the following, we will refer to a generalized metric indifferently as $(g,b)$, $V^{+}$ or $G$.

\section{Generalized connections}
\begin{definition}
\cite{6} A \emph{generalized connection} $\tilde{\nabla}$ on $E$ is an $\mathbb{R}$-linear map
$$\tilde{\nabla}: \Gamma(E) \rightarrow \Gamma(E^{*}\otimes E)$$ 
satisfying the following conditions:
\begin{nicenum}[label=\roman*.]
\item $\tilde{\nabla}_{v}(fw)=\pi (v)(f)w+f\tilde{\nabla}_{v}w \hspace{0.3cm} \forall v,w \in \Gamma (E), \forall f \in C^{\infty}(TM)$.
\item $\pi (u)\langle v,w \rangle =\langle \tilde{\nabla}_{u}v,w\rangle +\langle v,\tilde{\nabla}_{u}w \rangle $.
\end{nicenum} 
\end{definition}
\begin{lemma}
The generalized connection $\tilde{\nabla}_{X+\xi}(Y+\eta )=\nabla _{X}Y+\nabla _{X} \eta \hspace{0.4cm}$ \\  $\forall X+\xi ,Y+\eta \in C^{\infty}(E)$, induced by an affine connection $\nabla $, is well-defined.
\end{lemma}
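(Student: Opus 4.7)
The plan is to verify directly that the two defining axioms of a generalized connection in Definition 4.1 hold for $\tilde\nabla$. Throughout, we set $u=X+\xi$, $v=Y+\eta$, $w=Z+\zeta$, and we use that $\pi(X+\xi)=X$. A preliminary observation: the formula $\nabla_X\eta$ must be interpreted as the dual connection on $T^*M$ canonically induced by $\nabla$, namely the unique $\mathbb{R}$-linear operator on $C^\infty(T^*M)$ satisfying $(\nabla_X\eta)(Y):=X(\eta(Y))-\eta(\nabla_X Y)$. This is an affine connection on $T^*M$ (it obeys the Leibniz rule in $\eta$ and is $C^\infty$-linear in $X$), so $\tilde\nabla$ is well-defined as an $\mathbb{R}$-linear map $\Gamma(E)\to\Gamma(E^*\otimes E)$.

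For axiom (i), I would expand
\[
\tilde\nabla_{X+\xi}\bigl(f(Y+\eta)\bigr)=\tilde\nabla_{X+\xi}(fY+f\eta)=\nabla_X(fY)+\nabla_X(f\eta),
\]
and apply the Leibniz rule on $TM$ and on $T^*M$ to obtain $X(f)Y+f\nabla_X Y+X(f)\eta+f\nabla_X\eta$. Regrouping yields $X(f)(Y+\eta)+f\,\tilde\nabla_{X+\xi}(Y+\eta)$, which is $\pi(u)(f)w+f\tilde\nabla_u w$ since $\pi(X+\xi)(f)=X(f)$.

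For axiom (ii), I would unpack the pairing as $\langle v,w\rangle=\eta(Z)+\zeta(Y)$ and compute
\[
\pi(u)\langle v,w\rangle=X(\eta(Z))+X(\zeta(Y)).
\]
On the other hand,
\[
\langle \tilde\nabla_u v,w\rangle+\langle v,\tilde\nabla_u w\rangle=\zeta(\nabla_X Y)+(\nabla_X\eta)(Z)+(\nabla_X\zeta)(Y)+\eta(\nabla_X Z).
\]
The two expressions coincide because, by the very definition of the dual connection, $X(\eta(Z))=(\nabla_X\eta)(Z)+\eta(\nabla_X Z)$ and $X(\zeta(Y))=(\nabla_X\zeta)(Y)+\zeta(\nabla_X Y)$. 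Adding these two identities gives the desired equality.

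There is no real obstacle: the only conceptual point is recognising that $\nabla_X\eta$ refers to the canonical dual connection, after which both axioms reduce to the Leibniz rule for $\nabla$ together with the duality identity. Once that interpretation is in place, the verification is routine algebra on the components $TM$ and $T^*M$ of $E$.
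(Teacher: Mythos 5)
Your proof is correct and follows essentially the same route as the paper: verify the Leibniz axiom componentwise and check compatibility with the pairing by expanding it and invoking the defining identity of the dual connection $(\nabla_X\eta)(Z)=X(\eta(Z))-\eta(\nabla_X Z)$. You are merely a bit more explicit than the paper about axiom (i) and about $\nabla_X\eta$ being the canonically induced connection on $T^{*}M$, which is a welcome clarification but not a different argument.
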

\begin{proof}
Condition i. follows from the definition of $\tilde{\nabla}$ in terms of the affine connection $\nabla $. We verify condition ii., or compatibility of $\tilde{\nabla}$ with $\langle \cdot ,\cdot  \rangle$ , by observing that:
$$2X\langle Y+\eta ,Z+\zeta \rangle =X(\eta (Z)+\zeta (Y)),$$
$$2 \langle \nabla _{X}Y+\nabla _{X}\eta ,Z+\zeta \rangle +\langle Y+\eta ,\nabla _{X}Z+\nabla _{X}\zeta \rangle =$$
$$=(\nabla _{X}\eta )(Z)+\zeta ( \nabla _{X}Y)+\eta (\nabla _{X}Z)+(\nabla _{X}\zeta )(Y)=X(\eta (Z)+\zeta (Y)).$$
Thus, we have obtained the required compatibility:
$$X\langle Y+\eta ,Z+\zeta \rangle = \langle \nabla _{X}Y+\nabla _{X}\eta ,Z+\zeta \rangle +\langle Y+\eta ,\nabla _{X}Z+\nabla _{X}\zeta \rangle .$$
\end{proof}
\begin{oss}
\cite{1} Definition 4.1 can be extended to subbundles of $E$. Moreover, let be $v,w \in C^{\infty}(V^{+})$ and $X \in C^{\infty}(TM)$. If we set $u=X^{-}$, i.e., the extension of $X$ to $V^{-}$, $X^{-}=X-gX+bX$, then the following equality holds:
$$X\langle v,w \rangle =\langle [X^{-},v]_{C},w \rangle +\langle v, [X^{-}, w]_{C}\rangle .$$
\end{oss}
\begin{definition}
\cite{1} We define the \emph{generalized connection associated} with a generalized metric $V^{+}$ as: $$\nabla _{X}^{+}v=([X^{-},v]_{C})_{+},$$
where the superscript $\pm $ for a vector field denotes the extension to $V^{\pm}$, \\ $X^{\pm}=X\pm gX+bX$, while the subscript denotes the projection from $E$ to $V^{\pm}$, respectively.
\end{definition}
\begin{oss}
\cite{1} By using the projection $\pi $, it is possible to identify $V^{+}$ with $TM$ and thus obtain the following affine connection on the tangent bundle:
$$ \nabla _{X}^{+}Y=\pi (([X^{-},Y^{+}]_{C})_{+}).$$
Therefore, we have $(\nabla _{X}^{+}Y)^{+}=\nabla _{X}^{+}Y^{+}$.
\end{oss}
\begin{proposition}
\cite{1} The connection $\nabla ^{+}$ on $M$ induced by a generalized metric $V^{+}$ is compatible with the Riemannian metric $g$ associated with $V^{+}$. Moreover, the torsion $3$-tensor is given by:
$$T^{\nabla ^{+}}(X,Y,Z)=g(T^{\nabla ^{+}}(X,Y),Z)=-db(X,Y,Z).$$
\end{proposition}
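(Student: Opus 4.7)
The statement splits into two independent claims, both of which I would reduce to a single Courant-bracket computation using the extension identity of Remark 4.1.1 together with the elementary facts $\langle Y^+, Z^+\rangle = 2g(Y, Z)$ and $V^+ \perp V^-$.

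For metric compatibility, I would apply Remark 4.1.1 with $v := Y^+$ and $w := Z^+$:
\[
X \langle Y^+, Z^+\rangle = \langle [X^-, Y^+]_C, Z^+\rangle + \langle Y^+, [X^-, Z^+]_C\rangle.
\]
The left-hand side equals $2 X g(Y, Z)$. On the right, since $Z^+\in V^+$ is orthogonal to $V^-$, only the $V^+$-projection of $[X^-, Y^+]_C$ contributes to the pairing with $Z^+$; by the definition of $\nabla^+$ this projection is $(\nabla^+_X Y)^+$, whose pairing with $Z^+$ equals $2 g(\nabla^+_X Y, Z)$. Treating the second term symmetrically and dividing by $2$ produces $X g(Y, Z) = g(\nabla^+_X Y, Z) + g(Y, \nabla^+_X Z)$.

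For the torsion, observe first that $T^{\nabla^+}(X, Y) := \nabla^+_X Y - \nabla^+_Y X - [X, Y]$ is automatically tensorial. Using the same pairing trick together with the fact that $[X, Y]^+$ is the $V^+$-extension of $[X, Y]$, I would reduce the claim to
\[
2 g\bigl(T^{\nabla^+}(X, Y), Z\bigr) = \bigl\langle [X^-, Y^+]_C - [Y^-, X^+]_C - [X, Y]^+, \, Z^+\bigr\rangle.
\]
Substituting $X^\pm = X \pm gX + bX$ into both Courant brackets and expanding with the Cartan identities $\mathcal{L}_U\omega = d\iota_U\omega + \iota_U d\omega$ and $\mathcal{L}_U(\iota_V\omega) = \iota_{[U,V]}\omega + \iota_V\mathcal{L}_U\omega$, evaluation at $Z$ splits the expression cleanly into a purely-$g$ block and a purely-$b$ block. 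The $g$-block reproduces the Koszul combination for the Levi-Civita connection of $g$ applied to the skew-symmetric difference, and therefore vanishes identically. The $b$-block collects into the cyclic coboundary expansion of $b$ on $(X, Y, Z)$, producing $-db(X, Y, Z)$.

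The main obstacle is the bookkeeping in this second step: the asymmetry between $\iota_X(gY) = g(X, Y)$ (symmetric in $X,Y$) and $\iota_X(bY) = -b(X, Y)$ (antisymmetric), together with the numerous Lie-derivative and $d$-terms from the Courant formula, produces an expression whose signs are easy to confuse. Once the algebra is correctly organized, identifying the surviving $b$-terms with the classical formula $db(X, Y, Z) = X b(Y, Z) - Y b(X, Z) + Z b(X, Y) - b([X, Y], Z) + b([X, Z], Y) - b([Y, Z], X)$ is immediate and yields the stated torsion formula.
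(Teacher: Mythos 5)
The paper itself gives no proof of this proposition --- it is imported from \cite{1} --- so there is no internal argument to compare your route against; your reduction is the natural one. The compatibility half is fine: the identity of Remark 4.1.1 together with $V^{-}=(V^{+})^{\perp}$ and $\langle Y^{+},Z^{+}\rangle =2g(Y,Z)$ (the factor $2$ only reflects the normalization of $\langle \cdot ,\cdot \rangle$ and cancels) gives $Xg(Y,Z)=g(\nabla ^{+}_{X}Y,Z)+g(Y,\nabla ^{+}_{X}Z)$, and the reduction $2g(T^{\nabla ^{+}}(X,Y),Z)=\langle [X^{-},Y^{+}]_{C}-[Y^{-},X^{+}]_{C}-[X,Y]^{+},Z^{+}\rangle$ is correct.

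The gap is precisely in the step you defer to ``bookkeeping''. Carrying out the expansion with the conventions fixed in this paper ($X^{\pm}=X\pm gX+bX$, the untwisted Courant bracket of Definition 2.3, $(bY)(Z)=b(Y,Z)$), the $g$-terms cancel outright (no Koszul identity is even needed), and what survives is
$$2\bigl(b(Z,[X,Y])+(\mathcal{L}_{X}(bY))(Z)-(\mathcal{L}_{Y}(bX))(Z)+Z(b(X,Y))\bigr),$$
which, after expanding the Lie derivatives, is exactly the combination you quote for the exterior derivative, i.e. $+2\,db(X,Y,Z)$, not $-2\,db(X,Y,Z)$. A coordinate check confirms the sign: on $\mathbb{R}^{3}$ with $g$ Euclidean and $b=x^{1}dx^{2}\wedge dx^{3}$ one finds $[X^{-},Y^{+}]_{C}=dx^{3}$ for $X=\partial _{1}$, $Y=\partial _{2}$, hence $\nabla ^{+}_{\partial _{1}}\partial _{2}=\tfrac{1}{2}\partial _{3}$, $\nabla ^{+}_{\partial _{2}}\partial _{1}=-\tfrac{1}{2}\partial _{3}$, and $g(T^{\nabla ^{+}}(\partial _{1},\partial _{2}),\partial _{3})=1=db(\partial _{1},\partial _{2},\partial _{3})$. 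So your outline, executed as stated, proves $T^{\nabla ^{+}}=+db$ in this paper's conventions; the minus sign in the displayed formula comes from the conventions of \cite{1} (e.g.\ an opposite sign of $b$ in the graph defining $V^{\pm}$, or in the $b$-transform). Since the entire content of the torsion claim is this sign, asserting that the algebra ``produces $-db(X,Y,Z)$'' without performing it, and without identifying the convention responsible for the minus, is a genuine gap: as written, that final step fails.
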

\begin{oss}
The torsion determined in previous proposition refers (as do all other results) to untwisted Courant bracket. In the case where one considers Courant bracket twisted by a $3$-form $H$, the torsion becomes $T^{\nabla ^{+}}=-db-H$. Moreover, the same reasoning applied to $V^{+}$ also holds for $V^{-}$, yielding a connection $\nabla ^{-}$ on $M$ compatible with $g$ and with torsion $T^{\nabla ^{-}}=db+H$.
\end{oss}
\begin{lemma}
Let $G$ be a generalized metric on $E$ defined by $(g,b)$. Let $\nabla ^{+}$ be the affine connection on $M$ induced by $G$. Then, the following results hold:
\begin{nicenum}[label=\roman*.]
\item If $db=0$ then $(\nabla ^{+}_{X}b)(Y,Z)+(\nabla _{Y}^{+}b)(Z,X)+(\nabla _{Z}^{+}b)(X,Y)=0$.
\item If $\nabla ^{+}b=0$ then $b(T^{\nabla ^{+}}(X,Y),Z) -b(T^{\nabla ^{+}}(Z,X),Y)-b(T^{\nabla ^{+}}(Y,Z),X)=$ \\ $=db(X,Y,Z).$
\end{nicenum}
\end{lemma}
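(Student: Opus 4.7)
The plan is to derive a single master identity valid for any affine connection $\nabla$ on $TM$ and any $2$-form $b$, namely
\begin{equation*}
db(X,Y,Z) \;=\; \sum_{\mathrm{cyc}} (\nabla_X b)(Y,Z) \;+\; \sum_{\mathrm{cyc}} b\bigl(T^{\nabla}(X,Y),\, Z\bigr),
\end{equation*}
where $\sum_{\mathrm{cyc}}$ denotes the cyclic sum over $(X,Y,Z)$ and $T^{\nabla}$ is the torsion of $\nabla$. Once this identity is in hand, both statements are essentially one-line consequences, so the real work is concentrated in proving it.

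To establish the master identity I would expand each covariant derivative as $(\nabla_X b)(Y,Z) = X(b(Y,Z)) - b(\nabla_X Y, Z) - b(Y, \nabla_X Z)$ and form the cyclic sum. Using the skew-symmetry of $b$ the terms involving $\nabla$ group in pairs of the form $-b(\nabla_X Y, Z) + b(\nabla_Y X, Z) = -b(\nabla_X Y - \nabla_Y X,\, Z) = -b([X,Y], Z) - b(T^{\nabla}(X,Y), Z)$, and the remaining derivative-on-scalar terms $X(b(Y,Z))$ together with the Lie-bracket contributions reassemble, via the invariant formula for the exterior derivative, into exactly $db(X,Y,Z)$. The only nontrivial bookkeeping is keeping track of signs under the skew-symmetry of $b$ and $T^{\nabla}$, which is where I expect most of the care to be needed.

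For part (i), Proposition 4.3 yields $g(T^{\nabla^+}(X,Y), Z) = -db(X,Y,Z)$. Under the hypothesis $db = 0$ the nondegeneracy of $g$ forces $T^{\nabla^+} \equiv 0$, so substituting both $T^{\nabla^+} = 0$ and $db = 0$ into the master identity leaves
\begin{equation*}
0 \;=\; \sum_{\mathrm{cyc}} (\nabla^+_X b)(Y,Z),
\end{equation*}
which is exactly the Bianchi-type assertion in (i).

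For part (ii), the assumption $\nabla^+ b = 0$ makes each $(\nabla^+_{\,\cdot}\,b)$ term vanish, reducing the master identity to $db(X,Y,Z) = \sum_{\mathrm{cyc}} b(T^{\nabla^+}(X,Y), Z)$; a routine rewrite using the skew-symmetry of $T^{\nabla^+}$ (e.g.\ $T^{\nabla^+}(Z,X) = -T^{\nabla^+}(X,Z)$) puts the three cyclic summands into the signed form displayed in the statement. Thus the main obstacle is really the verification of the master identity; once that is done, (i) and (ii) follow by straightforward substitution.
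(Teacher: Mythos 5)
Your overall route is the same as the paper's: both rest on a master identity relating $db$, the cyclic sum of $\nabla^{+}b$, and the torsion, combined with $g(T^{\nabla^{+}}(X,Y),Z)=-db(X,Y,Z)$. Part (i) of your argument is correct and is essentially the paper's proof: $db=0$ forces $T^{\nabla^{+}}=0$ by nondegeneracy of $g$, and the identity then reduces to the cyclic condition on $\nabla^{+}b$. Note, however, that your identity carries the torsion term with a $+$ sign, while the paper quotes $db(X,Y,Z)=\sum_{cyc}\bigl((\nabla^{+}_{X}b)(Y,Z)-b(T^{\nabla^{+}}(X,Y),Z)\bigr)$; under the standard (unnormalized) invariant formula for $db$ your sign is the correct one, and for part (i) the discrepancy is harmless since the torsion term drops out anyway.

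For part (ii) there is a genuine gap at the final step. With $\nabla^{+}b=0$ your identity gives $db(X,Y,Z)=b(T(X,Y),Z)+b(T(Y,Z),X)+b(T(Z,X),Y)$, whereas the statement asks for $b(T(X,Y),Z)-b(T(Z,X),Y)-b(T(Y,Z),X)$. You assert these agree after a "routine rewrite using the skew-symmetry of $T$", but skew-symmetry of $T$ in its two slots only lets you replace $b(T(Z,X),Y)$ by $-b(T(X,Z),Y)$; it cannot flip the sign of that entire summand inside the cyclic sum. The two expressions differ by $2\bigl(b(T(Y,Z),X)+b(T(Z,X),Y)\bigr)$, and writing $b(V,W)=g(AV,W)$ with $A=g^{-1}b$ together with $g(T(Y,Z),W)=-db(Y,Z,W)$ shows this difference equals $2\bigl(db(Y,Z,AX)+db(Z,X,AY)\bigr)$, which has no reason to vanish in general. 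So the passage from the cyclic sum to the signed combination in the statement fails as written; the mismatch is a sign/convention issue (it also affects the formula the paper itself quotes), and you should resolve it by fixing the conventions for $db$ and $T^{\nabla^{+}}$ and deriving the resulting signed identity honestly, rather than appealing to skew-symmetry of the torsion.
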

\begin{proof}
The proof follows from direct computations, observing that $T^{\nabla ^{+}}=-db$ and using the following formula:
$$db(X,Y,Z)=\sum _{cyc}((\nabla _{X}^{+}b)(Y,Z)-b(T^{\nabla ^{+}}(X,Y),Z)).$$
\end{proof}
\begin{lemma}
Let $G$ be a generalized metric on $E$ defined by $(g,b)$. Let $\nabla ^{+}$ be the affine connection on $M$ induced by $G$ and $\tilde{\nabla}$ the generalized connection induced by $\nabla^{+}$. Then $\nabla ^{+}b=0 \iff \tilde{\nabla}G=0$.
\end{lemma}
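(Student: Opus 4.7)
The plan is to compute $\tilde{\nabla} G$ directly via the block decomposition of $G$ from Remark 3.3 and show that every block of $\tilde{\nabla}G$ is tensorially built from $\nabla^{+} b$, so that vanishing of $\tilde{\nabla} G$ is forced by, and forces, $\nabla^{+} b = 0$.

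First I would extend $\tilde{\nabla}$ to $\mathrm{End}(E)$ by the usual derivation rule $(\tilde{\nabla}_u G)(v) = \tilde{\nabla}_u(Gv) - G(\tilde{\nabla}_u v)$. Since $\tilde{\nabla}_{X+\xi}$ acts as $\nabla^{+}_X$ on both the $TM$- and the $T^{*}M$-component (by the definition given in Lemma 4.1), this induced action on $\mathrm{End}(E) \cong (TM \oplus T^{*}M) \otimes (T^{*}M \oplus TM)$ decomposes blockwise as the componentwise application of $\nabla^{+}$, using the Leibniz rule and the dual connection on $T^{*}M$ induced by $\nabla^{+}$.

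Next I would invoke Proposition 4.1 to obtain $\nabla^{+} g = 0$, whence also $\nabla^{+} g^{-1} = 0$. Using the block form
$$G = \begin{pmatrix} -g^{-1}b & g^{-1} \\ g - bg^{-1}b & bg^{-1} \end{pmatrix}$$
from Remark 3.3 and applying $\nabla^{+}$ blockwise, while repeatedly exploiting $\nabla^{+} g = \nabla^{+} g^{-1} = 0$, the four block-derivatives evaluate respectively to $-g^{-1}(\nabla^{+} b)$, $0$, $-(\nabla^{+} b)g^{-1}b - bg^{-1}(\nabla^{+} b)$, and $(\nabla^{+} b)g^{-1}$, each of which depends only on $\nabla^{+} b$.

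The conclusion is then immediate: if $\nabla^{+} b = 0$, all four blocks vanish and $\tilde{\nabla}G = 0$; conversely, if $\tilde{\nabla}G = 0$, then in particular the $(2,2)$-block $(\nabla^{+} b) g^{-1}$ vanishes, and composing with $g$ on the right gives $\nabla^{+} b = 0$. The main bookkeeping step is justifying the blockwise action of $\tilde{\nabla}$ on $\mathrm{End}(E)$ together with the correct use of the dual connection on $T^{*}M$; once that identification is in place the remaining computation is routine algebra.
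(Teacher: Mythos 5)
Your proposal is correct and follows essentially the same route as the paper: both use the block form $G=\left(\begin{smallmatrix}-g^{-1}b & g^{-1}\\ g-bg^{-1}b & bg^{-1}\end{smallmatrix}\right)$ together with $\nabla^{+}g=0$ (hence $\nabla^{+}g^{-1}=0$) to reduce $\tilde{\nabla}G$ to expressions in $\nabla^{+}b$, the paper simply carrying out the computation on sections $Y+\eta$ rather than packaging it as the induced connection on $\mathrm{End}(E)$. The converse in the paper extracts $\nabla^{+}b=0$ from the $(1,1)$-block instead of your $(2,2)$-block, which is an immaterial difference.
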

\begin{proof}
"$\rightarrow $" We have already noted that $\nabla ^{+}g=0$, so:
$$\tilde{\nabla} _{X+\xi }(G(Y+\eta ))=\nabla _{X}^{+}((-g^{1}b)(Y)+g^{-1}\eta )+\nabla _{X}^{+}((g-bg^{-1}b)(X)+bg^{-1}(\eta ))=$$
$$=-g^{-1}b\nabla _{X}^{+}Y+g^{-1}\nabla _{X}^{+}\eta +(g-bg^{-1}b)\nabla _{X}^{+}Y+bg^{-1}\nabla _{X}^{+}\eta =G(\tilde {\nabla} _{X+\xi}(Y+\eta)).$$
"$\leftarrow$" If $\tilde{\nabla}G=0$, then $g^{-1}\nabla _{X} ^{+}(bY)=g^{-1}b(\nabla _{X}^{+}Y)$   $\forall X,Y \in C^{\infty}(M)$. So $\nabla ^{+}b$ vanishes.
\end{proof}
\begin{definition}
\cite{6} The \emph{torsion} $T^{\tilde{\nabla}} \in C^{\infty}(\bigwedge ^{2}E \otimes E)$ of a generalized connection $\tilde{\nabla}$ on $E$ with respect to the Courant bracket, is defined as:
$$T^{\tilde{\nabla}}(v,w,u)=\langle \tilde{\nabla} _{v}w-\tilde{\nabla} _{w}v-[v,w]_{C},u\rangle + \frac{1}{2} (\langle \tilde{\nabla}_{u}v,w \rangle -\langle \tilde{\nabla}_{u}w,v\rangle ).$$
The \emph{torsion} $T^{\tilde{\nabla}}_{[\cdot,\cdot]_{\nabla}}$ of a generalized connection $\tilde{\nabla}$ on $E$ with respect to $[\cdot,\cdot]_{\nabla}$, where $\nabla$ is an affine connection on $TM$, is defined as:
$$T_{[\cdot,\cdot]_{\nabla}}^{\tilde{\nabla}}(v,w)=\tilde{\nabla}_{v}w-\tilde{\nabla}_{w}v-[v,w]_{\nabla}.$$
\end{definition}
\begin{oss}
\cite{3}The torsion $T^{\tilde{\nabla}}$ of a generalized connection $\tilde{\nabla}$ can be expressed equivalently in terms of $[\cdot,\cdot]_{D}$ as follows:
$$T^{\tilde{\nabla}}(v,w,u)=\langle \tilde{\nabla}_{v}w-\tilde{\nabla}_{w} v-[v,w]_{D},u\rangle +\langle \tilde{\nabla}_{u}v,w\rangle ;$$
$$T^{\tilde{\nabla}}(v,w)=\tilde{\nabla}_{v}w-\tilde{\nabla}_{w}v-[v,w]_{D}+(\tilde{\nabla}v)^{*}w$$
where $(\tilde{\nabla}v)^{*}$ is defined by: $\langle u,(\tilde{\nabla}v)^{*}w\rangle =\langle \tilde{\nabla}_{u}v,w \rangle$.
\end{oss}
\begin{lemma}
Let $\nabla$ be a torsion-free affine connection on $M$ and let $\tilde{\nabla}$ be the induced generalized connection. Then $T^{\tilde{\nabla}}=T_{[\cdot,\cdot]_{\nabla}}^{\tilde{\nabla}}=0$.
\end{lemma}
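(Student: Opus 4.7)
The statement splits into two independent claims; I would attack them in the opposite order from how they appear, since the $[\cdot,\cdot]_{\nabla}$-torsion is essentially immediate and will warm up the notation.

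First, for $T^{\tilde{\nabla}}_{[\cdot,\cdot]_{\nabla}}$, I write generic sections as $v=X+\xi$, $w=Y+\eta$. By the definition of $\tilde{\nabla}$ and of $[\cdot,\cdot]_{\nabla}$,
$$\tilde{\nabla}_{v}w-\tilde{\nabla}_{w}v = (\nabla_{X}Y-\nabla_{Y}X)+(\nabla_{X}\eta-\nabla_{Y}\xi), \qquad [v,w]_{\nabla}=[X,Y]+\nabla_{X}\eta-\nabla_{Y}\xi.$$
Subtracting, the $T^{*}M$-components cancel and what remains is precisely the classical torsion $T^{\nabla}(X,Y)=\nabla_{X}Y-\nabla_{Y}X-[X,Y]$, which vanishes by hypothesis. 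Hence $T^{\tilde{\nabla}}_{[\cdot,\cdot]_{\nabla}}=0$.

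For the Courant-bracket torsion I would use the Dorfman-bracket rewriting provided in the Remark immediately preceding the lemma, since it collapses the symmetric correction $\tfrac12(\langle \tilde{\nabla}_{u}v,w\rangle-\langle \tilde{\nabla}_{u}w,v\rangle)$ into the single term $\langle \tilde{\nabla}_{u}v,w\rangle$ and replaces the skew Courant bracket by the Dorfman bracket $[X+\xi,Y+\eta]_{D}=[X,Y]+\mathcal{L}_{X}\eta-\iota_{Y}d\xi$. Setting $u=Z+\zeta$, the vector part of $\tilde{\nabla}_{v}w-\tilde{\nabla}_{w}v-[v,w]_{D}$ is again $T^{\nabla}(X,Y)=0$, so I only need to handle the $T^{*}M$-component paired against $Z$. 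That reduces everything to proving
$$\bigl(\nabla_{X}\eta-\nabla_{Y}\xi-\mathcal{L}_{X}\eta+\iota_{Y}d\xi\bigr)(Z)+\langle \tilde{\nabla}_{u}v,w\rangle=0,$$
where $\langle \tilde{\nabla}_{u}v,w\rangle=(\nabla_{Z}\xi)(Y)+\eta(\nabla_{Z}X)$.

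The key algebraic identity I will invoke twice is the torsion-free rewriting $(\nabla_{X}\eta)(Z)-(\mathcal{L}_{X}\eta)(Z)=-\eta(\nabla_{Z}X)$, which follows from $[X,Z]=\nabla_{X}Z-\nabla_{Z}X$; and the analogous computation
$$-(\nabla_{Y}\xi)(Z)+(\iota_{Y}d\xi)(Z)=\xi(\nabla_{Z}Y-[Y,Z])-Z(\xi(Y))=-(\nabla_{Z}\xi)(Y),$$
again using $T^{\nabla}=0$ and the formula $d\xi(Y,Z)=Y(\xi(Z))-Z(\xi(Y))-\xi([Y,Z])$. Adding these two reductions the bracketed expression evaluated at $Z$ becomes $-\eta(\nabla_{Z}X)-(\nabla_{Z}\xi)(Y)$, which exactly cancels $\langle \tilde{\nabla}_{u}v,w\rangle$, giving $T^{\tilde{\nabla}}=0$.

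The only real obstacle is bookkeeping: one must track the $TM$- versus $T^{*}M$-components of each expression and remember that the pairing $\langle\cdot,\cdot\rangle$ couples the vector component of one side with the form component of the other. Once the torsion-free identity relating $\nabla\eta$ and $\mathcal{L}\eta$ is isolated, the cancellation is mechanical; adopting the Dorfman-form of the torsion (rather than the skew Courant form with its $\tfrac12$-factors) is what prevents the calculation from becoming unwieldy.
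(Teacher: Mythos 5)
Your proposal is correct and takes essentially the same route as the paper: both use the Dorfman-bracket form of the torsion from the preceding Remark, reduce the $TM$-component to $T^{\nabla}(X,Y)=0$, and cancel the $T^{*}M$-component against $\langle \tilde{\nabla}_{u}v,w\rangle$ via the torsion-free identities relating $\nabla$, $\mathcal{L}$ and $d$, with the $[\cdot,\cdot]_{\nabla}$-torsion collapsing immediately to $T^{\nabla}(X,Y)=0$ exactly as in the paper. (One transcription slip: in your second identity the intermediate expression should be $\xi(\nabla_{Y}Z-[Y,Z])=\xi(\nabla_{Z}Y)$ rather than $\xi(\nabla_{Z}Y-[Y,Z])$; the stated end result $-(\nabla_{Z}\xi)(Y)$ is correct.)
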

\begin{proof}
We begin by computing the torsion $T^{\tilde{\nabla}}$; let $v=X+\xi$, $w=Y+\eta$, $u=Z+\zeta $ $\in C^{\infty}(E)$:
$$T^{\tilde{\nabla}}(v,w,u)=\langle \tilde{\nabla}_{v}w-\tilde{\nabla}_{w}v-[v,w]_{D},u \rangle + \langle u,(\tilde{\nabla} v)^{*}w\rangle =$$
$$=\langle \nabla _{X}\eta -\nabla _{Y}\xi +\iota _{Y}d\xi -\mathcal{L}_{X}\eta ,Z+\zeta \rangle +\langle \tilde{\nabla}_{u}v,w\rangle =$$
$$=\frac{1}{2}((\nabla _{X}\eta )(Z)-(\nabla _{Y}\xi )(Z)+d\xi (Y,Z)-(\mathcal{L}_{X}\eta )(Z)+\eta (\nabla _{Z}X)+(\nabla _{Z}\xi )(Y))=$$
$$= \frac{1}{2}(\nabla _{X}(\eta (Z))-\eta (\nabla _{X}Z)-\nabla _{Y}(\xi (Z))+\xi (\nabla _{Y}Z)+\nabla _{Y}(\xi (Z))+$$
$$-\nabla _{Z}(\xi (Y))-\xi ([Y,Z])-d\eta (X,Z)-d(\eta (X))(Z)+\eta (\nabla _{Z}X)+(\nabla _{Z}\xi )(Y))=$$
$$= \frac{1}{2}((\nabla _{X}(\eta (Z))-\eta (\nabla _{X}Z)-\nabla _{Y}(\xi (Z))+\xi (\nabla _{Y}Z)+\nabla _{Y}(\xi (Z))+$$
$$-\nabla _{Z}(\xi (Y))-\xi ([Y,Z])-\nabla _{X}(\eta (Z))+\nabla _{Z}(\eta (X))+\eta ([X,Z])-\nabla _{Z}(\eta (X))+$$
$$+\eta (\nabla _{Z}X)+(\nabla _{Z}\xi )(Y))=$$
$$=\frac{1}{2}(-\eta (\nabla _{X}Z)-\nabla _{Y}(\xi (Z))+\xi (\nabla _{Y}Z)+\nabla_{Y}(\xi (Z))-\nabla _{Z}(\xi (Y))+$$
$$-\xi (\nabla _{Y}Z)+\xi (\nabla _{Z}Y)+\eta ([X,Z])+\eta (\nabla _{Z}X)+(\nabla _{Z}\xi )(Y))=0.$$
We now compute the other torsion $T_{[\cdot,\cdot]_{\nabla}}^{\tilde{\nabla}}$:
$$T_{[\cdot,\cdot]_{\nabla}}^{\tilde{\nabla}}(X+\xi ,Y+\eta )=\nabla _{X}Y+\nabla _{X}\eta -\nabla _{Y}X-\nabla _{Y}\xi -[X+\xi ,Y+\eta ]_{\nabla}=T^{\nabla}(X,Y)=0.$$
\end{proof}
\begin{lemma}
Let $\tilde{\nabla}$ be a generalized connection on $E$ and let $\nabla $ be an affine connection on $TM$. The torsions $T^{\tilde{\nabla}}$ and $T^{\tilde{\nabla}}_{[\cdot,\cdot]_{\nabla}}$ coincide if and only if:
$$\frac{1}{2}(\eta (T^{\nabla}(X,Z)+\nabla _{Z}X)-\xi (T^{\nabla}(Y,Z)+\nabla _{Z}Y)+Z(\xi (Y)))=\langle \tilde{\nabla}_{Z+\zeta}(X+ \xi ),Y+\eta \rangle $$
$$\forall v=X+\xi ,w=Y+\eta ,u=Z+\zeta \in C^{\infty}(E).$$
\end{lemma}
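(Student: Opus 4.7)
The plan is to compare the two torsion definitions directly. Since $T^{\tilde{\nabla}}_{[\cdot,\cdot]_{\nabla}}(v,w)\in C^{\infty}(E)$ while $T^{\tilde{\nabla}}(v,w,u)$ is a trilinear scalar, the assertion that the torsions coincide has to be read as $\langle T^{\tilde{\nabla}}_{[\cdot,\cdot]_{\nabla}}(v,w),u\rangle = T^{\tilde{\nabla}}(v,w,u)$ for every $u\in C^{\infty}(E)$. Subtracting the two expressions makes the common piece $\langle \tilde{\nabla}_{v}w-\tilde{\nabla}_{w}v,u\rangle$ cancel, so the identity to analyze is
$$T^{\tilde{\nabla}}(v,w,u)-\langle T^{\tilde{\nabla}}_{[\cdot,\cdot]_{\nabla}}(v,w),u\rangle = \langle [v,w]_{\nabla}-[v,w]_{C},u\rangle + \frac{1}{2}\bigl(\langle \tilde{\nabla}_{u}v,w\rangle - \langle \tilde{\nabla}_{u}w,v\rangle\bigr).$$

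First I would expand the bracket difference with $v=X+\xi$, $w=Y+\eta$, $u=Z+\zeta$. Using $[X,Z]=\nabla_{X}Z-\nabla_{Z}X-T^{\nabla}(X,Z)$ one gets the identity $(\mathcal{L}_{X}\eta)(Z)=(\nabla_{X}\eta)(Z)+\eta(\nabla_{Z}X+T^{\nabla}(X,Z))$, which together with the symmetric formula for $\mathcal{L}_{Y}\xi$ lets one trade the Lie-derivative part of $[v,w]_{C}$ for $\nabla$-derivatives modulo torsion and connection-twisted contributions. After substitution, the contracted difference $\langle [v,w]_{C}-[v,w]_{\nabla},u\rangle$ collapses to a combination of $\eta(T^{\nabla}(X,Z))$, $\xi(T^{\nabla}(Y,Z))$, $\eta(\nabla_{Z}X)$, $\xi(\nabla_{Z}Y)$ and residual $\nabla$-derivatives of the $1$-forms, plus an exact piece coming from $\frac{1}{2}d(\iota_{X}\eta-\iota_{Y}\xi)$ which is responsible for the $Z(\xi(Y))$-type term appearing in the statement.

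Second, I would invoke the compatibility condition $ii.$ of Definition $4.1$, written as $\langle \tilde{\nabla}_{u}w,v\rangle = \pi(u)\langle v,w\rangle -\langle \tilde{\nabla}_{u}v,w\rangle$, so that the symmetric combination becomes $\frac{1}{2}(\langle \tilde{\nabla}_{u}v,w\rangle -\langle \tilde{\nabla}_{u}w,v\rangle)=\langle \tilde{\nabla}_{u}v,w\rangle-\frac{1}{2}\pi(u)\langle v,w\rangle$. Setting the overall difference to zero and solving for $\langle \tilde{\nabla}_{Z+\zeta}(X+\xi),Y+\eta\rangle$ isolates precisely the right-hand side of the stated identity, yielding the forward implication. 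The converse is obtained by reversing the same algebra, the non-degeneracy of $\langle\cdot,\cdot\rangle$ promoting the pointwise identity back to the equality of torsions.

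The hard part will be the bookkeeping in the first step: one must keep track of which $\pi(v)$-, $\pi(w)$- and $\pi(u)$-derivatives of pairings are produced by $\mathcal{L}_{X}\eta$, $\mathcal{L}_{Y}\xi$ and by $d(\iota_{X}\eta-\iota_{Y}\xi)$, and verify that after the rewriting the only surviving derivative of the pairing is $\pi(u)\langle v,w\rangle$, exactly matching the compatibility contribution coming from $\tilde{\nabla}_{u}$. The Courant-specific exact term, which has no counterpart in $[\cdot,\cdot]_{\nabla}$, is the piece most susceptible to sign or factor errors and must be handled carefully so that the final answer collapses to the single clean expression on the left of the stated equivalence.
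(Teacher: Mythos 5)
Your proposal is correct and follows essentially the same route as the paper: both interpret the equality of torsions by pairing $T^{\tilde{\nabla}}_{[\cdot,\cdot]_{\nabla}}(v,w)$ with $u$, reduce the difference to the bracket-comparison term plus the connection term, and use the identity $(\mathcal{L}_{X}\eta)(Z)=(\nabla_{X}\eta)(Z)+\eta(\nabla_{Z}X+T^{\nabla}(X,Z))$ to produce the stated expression. The only cosmetic difference is that the paper first rewrites $T^{\tilde{\nabla}}$ via the Dorfman bracket (Remark 4.5.1), which absorbs the exact term and the $\tfrac{1}{2}\pi(u)\langle v,w\rangle$ contribution that you instead track explicitly through the compatibility axiom; your bookkeeping does indeed collapse to the same condition $\langle [v,w]_{D}-[v,w]_{\nabla},u\rangle=\langle \tilde{\nabla}_{u}v,w\rangle$.
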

\begin{proof}
Since a generalized connection is compatible with $\langle \cdot ,\cdot \rangle $, we can consider:
$$T_{[\cdot,\cdot]_{\nabla}}^{\tilde{\nabla}}(v,w,u)=\langle \tilde{\nabla}_{v}w-\tilde{\nabla}_{w}v-[v,w]_{\nabla},u\rangle .$$
We can thus say that:
$$T_{[\cdot,\cdot]_{\nabla}}^{\tilde{\nabla}}(v,w,u)=T^{\tilde{\nabla}}(v,w,u)\iff \langle [v,w]_{D}-[v,w]_{\nabla},u\rangle = \langle \tilde{\nabla}_{u}v,w\rangle .$$
Note now that, since $(\mathcal{L}_{X}\eta )(Y)=X(\eta (Y))-\eta ([X,Y])$ and $(\nabla _{X}\eta )(Y)=$ \\ $=X(\eta (Y))-\eta (\nabla _{X}Y)$, we obtain:
$$\langle [v,w]_{D}-[v,w]_{\nabla},u\rangle =\langle \mathcal{L}_{X}\eta -\mathcal{L}_{Y}\xi +d(\xi (Y))-\nabla _{X}\eta +\nabla _{Y}\xi , Z+\zeta \rangle=$$
$$=\frac{1}{2}(\eta (T^{\nabla}(X,Z)+\nabla _{Z}X)-\xi (T^{\nabla}(Y,Z)+\nabla _{Z}Y)+Z(\xi (Y)).$$
From the above, the thesis follows.
\end{proof}
\begin{corollary} 
Let $\tilde{\nabla}$ be a generalized connection induced by an affine connection $\nabla $. The torsions $T^{\tilde{\nabla}}$ and $T_{[\cdot,\cdot]_{\nabla}}^{\tilde{\nabla}}$ are equal if and only if they both vanish (i.e. if and only if $\nabla$ is torsion-free).
\end{corollary}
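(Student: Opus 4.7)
The plan is to derive the corollary as a direct combination of Lemma 4.4 and Lemma 4.5. One direction is essentially given: if $\nabla$ is torsion-free, then by Lemma 4.4 both torsions $T^{\tilde{\nabla}}$ and $T^{\tilde{\nabla}}_{[\cdot,\cdot]_{\nabla}}$ vanish simultaneously, hence they are trivially equal. So the content of the corollary is in the forward direction: if $T^{\tilde{\nabla}}=T^{\tilde{\nabla}}_{[\cdot,\cdot]_{\nabla}}$, then $\nabla$ is torsion-free.

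For this, I would start from the equivalent condition provided by Lemma 4.5, namely
\[
\tfrac{1}{2}\bigl(\eta(T^{\nabla}(X,Z)+\nabla_{Z}X)-\xi(T^{\nabla}(Y,Z)+\nabla_{Z}Y)+Z(\xi(Y))\bigr)=\langle \tilde{\nabla}_{Z+\zeta}(X+\xi),Y+\eta\rangle,
\]
and then plug in the explicit form of the induced generalized connection, $\tilde{\nabla}_{Z+\zeta}(X+\xi)=\nabla_{Z}X+\nabla_{Z}\xi$. The right-hand side becomes $\tfrac{1}{2}(\eta(\nabla_{Z}X)+(\nabla_{Z}\xi)(Y))$, which by the Leibniz identity $(\nabla_{Z}\xi)(Y)=Z(\xi(Y))-\xi(\nabla_{Z}Y)$ equals $\tfrac{1}{2}(\eta(\nabla_{Z}X)+Z(\xi(Y))-\xi(\nabla_{Z}Y))$.

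Subtracting this from the left-hand side, the terms $\eta(\nabla_{Z}X)$, $\xi(\nabla_{Z}Y)$ and $Z(\xi(Y))$ all cancel, leaving the requirement
\[
\eta(T^{\nabla}(X,Z))-\xi(T^{\nabla}(Y,Z))=0 \qquad \forall X,Y,Z\in C^{\infty}(TM),\ \forall \xi,\eta\in C^{\infty}(T^{*}M).
\]
Taking $\xi=0$ and letting $\eta$ and $X,Z$ vary freely forces $T^{\nabla}(X,Z)=0$ identically, so $\nabla$ is torsion-free. Finally, applying Lemma 4.4 to the torsion-free connection $\nabla$ shows that both $T^{\tilde{\nabla}}$ and $T^{\tilde{\nabla}}_{[\cdot,\cdot]_{\nabla}}$ vanish, which gives the stated equivalence.

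I do not anticipate any real obstacle: once Lemma 4.5 is available, the argument is an algebraic simplification together with a genericity argument on $(\xi,\eta,X,Y,Z)$. The only subtlety is verifying that the induced connection $\tilde{\nabla}$ really does satisfy the hypothesis of Lemma 4.5 (which holds automatically since it is a generalized connection by Lemma 4.1), and carefully tracking the signs when rewriting $(\nabla_{Z}\xi)(Y)$ so that the cancellation isolates the torsion terms cleanly.
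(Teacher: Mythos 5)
Your proposal is correct and follows essentially the same route as the paper: substitute the explicit form of the induced generalized connection into the criterion of the preceding lemma, cancel the $\nabla_{Z}X$, $\nabla_{Z}Y$ and $Z(\xi(Y))$ terms to isolate $\eta(T^{\nabla}(X,Z))-\xi(T^{\nabla}(Y,Z))=0$, deduce $T^{\nabla}=0$ by varying $\xi,\eta$ freely, and invoke the earlier lemma for the torsion-free direction. The only difference is that you spell out the Leibniz rewriting of $(\nabla_{Z}\xi)(Y)$ explicitly, which the paper leaves implicit.
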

\begin{proof}
Since $\tilde{\nabla}_{X+\xi }(Y+\eta )=\nabla _{X}Y+\nabla _{X}\eta $ we obtain:
$$\langle \tilde{\nabla}_{Z+\zeta}(X+ \xi ),Y+\eta \rangle =\frac{1}{2}(\eta (\nabla _{Z}X)+(\nabla _{Z}\xi )(Y)).$$
Substituting into previous lemma, $T^{\tilde{\nabla}}$ and $T_{[\cdot,\cdot]_{\nabla}}^{\tilde{\nabla}}$ are the same if and only if:
$$\eta (T^{\nabla}(X,Z))-\xi (T^{\nabla}(Y,Z))=0 \hspace{0.3cm} \forall v=X+\xi ,w=Y+\eta ,u=Z+\zeta \in C^{\infty}(E).$$ 
This equality holds for all $\xi $ and for all $\eta$ in $C^{\infty}(T^{*}M)$ and so $T^{\nabla}=0$. Therefore, by Lemma 4.3.1 $T^{\tilde{\nabla}}$ and $T_{[\cdot,\cdot]_{\nabla}}^{\tilde{\nabla}}$ are equal if and only if $\nabla $ is torsion-free.
\end{proof}

\section{Integrability of generalized metrics}
\subsection{Courant integrability}
\begin{definition}
A generalized metric $G$ on $E$ is said to be \emph{$[\cdot,\cdot]_{C}$-integrable} if its $+1$-eigenbundle, $V^{+}$, is involutive with respect to $[\cdot,\cdot]_{C}$.
\end{definition}
\begin{lemma}
A generalized metric is $[\cdot,\cdot]_{C}$-integrable if and only if its Nijenhuis tensor with respect to the Courant bracket, $N_{G}$, vanishes:
$$ N_{G}(v,w)=[Gv,Gw]_{C}-G[Gv,w]_{C}-G[v,Gw]_{C}+[v,w]_{C}=0 \hspace{0.2cm} \forall v,w \in C^{\infty}(E).$$
\end{lemma}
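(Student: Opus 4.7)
The proof follows the same template as Proposition 2.2.1 in the generalized almost complex case: exploit $\mathbb{R}$-bilinearity of $N_G$ and the splitting $E = V^+ \oplus V^-$, on which $G$ acts as $\pm \mathrm{Id}$.

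First I would verify the ``$\Leftarrow$'' direction. For $v, w \in C^{\infty}(V^+)$, substituting $Gv = v$ and $Gw = w$ into $N_G(v,w)$ gives $N_G(v,w) = 2([v,w]_C - G[v,w]_C)$. Since $\mathrm{Id} - G = 2\pi_-$ (twice the projection onto $V^-$ along the decomposition), this equals $4([v,w]_C)_-$. Hence $N_G = 0$ forces $([v,w]_C)_- = 0$, i.e.\ $[v,w]_C \in V^+$, so $V^+$ is Courant-involutive.

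For ``$\Rightarrow$'', I would decompose $v = v_+ + v_-$, $w = w_+ + w_-$ and use $\mathbb{R}$-bilinearity of $N_G$. A direct substitution shows $N_G(v_+, w_-) = N_G(v_-, w_+) = 0$, since the prefactors of $[v_\pm, w_\mp]_C$ and $G[v_\pm, w_\mp]_C$ that arise from the four terms of the Nijenhuis expression both vanish when the eigenvalues of $G$ have opposite signs. The pure-eigenspace contributions are handled as above: $N_G(v_+, w_+) = 4([v_+,w_+]_C)_-$ and an analogous computation yields $N_G(v_-, w_-) = 4([v_-,w_-]_C)_+$. Collecting the four pieces, $N_G(v,w) = 4([v_+,w_+]_C)_- + 4([v_-,w_-]_C)_+$, so $N_G \equiv 0$ is equivalent to involutivity of both $V^+$ and $V^-$.

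The remaining and most delicate step, which reconciles this with the paper's definition (only $V^+$-involutivity), is the implication $V^+$-involutive $\Rightarrow$ $V^-$-involutive. I would establish it using the derivation identity $\pi(s_1)\langle s_2, s_3\rangle = \langle [s_1, s_2]_D, s_3\rangle + \langle s_2, [s_1, s_3]_D\rangle$ of the Dorfman bracket, applied to $w_1, w_2 \in V^-$ and a test element $v \in V^+$, together with the orthogonality $\langle V^+, V^-\rangle = 0$ and the substitution $[e_1, e_2]_D - [e_1, e_2]_C = \frac{1}{2} d\langle e_1, e_2\rangle$ to exchange Dorfman brackets for Courant ones. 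This is the main obstacle of the proof: unlike Proposition 2.2.1, where complex conjugation supplies an automatic symmetry $V_i \leftrightarrow V_{-i}$, here $V^-$ is only the orthogonal complement of $V^+$, so the transfer of involutivity must rely exclusively on the pairing-compatibility of the bracket rather than on an involution exchanging the two eigenbundles.
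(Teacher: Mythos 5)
Your reduction of the statement to involutivity of the two eigenbundles is correct and is exactly the route the paper intends: its proof is a one-line appeal to Proposition 2.9.1, whose argument is ``compute the Nijenhuis operator on eigensections, then transfer involutivity to the orthogonal eigenbundle via compatibility of the bracket with $\langle\cdot,\cdot\rangle$''. Your identities $N_G(v_+,w_+)=4([v_+,w_+]_C)_-$, $N_G(v_-,w_-)=4([v_-,w_-]_C)_+$ and $N_G(v_\pm,w_\mp)=0$ are all right, so $N_G\equiv 0$ is indeed equivalent to involutivity of both $V^+$ and $V^-$, and you correctly isolate the crux: ``$V^+$ involutive $\Rightarrow V^-$ involutive''.

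That last step is where your proposal has a genuine gap, and the sketch you give does not close it. Applying the Dorfman derivation identity to $w_1,w_2\in C^{\infty}(V^-)$ and a test section $v\in C^{\infty}(V^+)$, orthogonality gives $\langle[w_1,w_2]_D,v\rangle=-\langle w_2,[w_1,v]_D\rangle$ with $[w_1,v]_D=[w_1,v]_C$; but this is a \emph{mixed} bracket of sections of different eigenbundles, about whose $V^-$-component the involutivity of $V^+$ says nothing --- that component is generically nonzero (it is precisely what the paper discards when defining $\nabla^+_Xv=([X^-,v]_C)_+$), and chasing the identity further only reproduces the symmetrization $[w_1,w_2]_D+[w_2,w_1]_D=d\langle w_1,w_2\rangle$, i.e.\ a tautology. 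Structurally the transfer cannot follow from orthogonality and pairing-compatibility alone: writing $V^{\pm}$ as the graphs of $\pm g+b$, a direct computation shows that involutivity of $V^+$ is the condition $g(\nabla^{LC}_ZX,Y)-g(\nabla^{LC}_ZY,X)=-db(X,Y,Z)$, while involutivity of $V^-$ is the same left-hand side $=+db(X,Y,Z)$; these are different conditions, and (as you yourself note) in Proposition 2.9.1 it is complex conjugation, not the pairing, that exchanges $V_i$ and $V_{-i}$ --- here no such involution exists. Worse, since $V^+$ is definite rather than isotropic, the Leibniz defect $[v,fw]_C=f[v,w]_C+(\pi(v)f)w-\tfrac12\langle v,w\rangle\,df$ shows that involutivity of $V^+$ would force $df\in V^+$ for every $f$, which is impossible because $T^*M$ is isotropic and $V^+$ is positive definite; so the transfer implication holds only vacuously, and no manipulation of the derivation identity can produce it non-vacuously. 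To make your write-up complete you would have to replace the proposed pairing argument by such a direct graph computation (or an appeal to the vacuity); be aware that the paper's own proof, by citing Proposition 2.9.1, leaves exactly the same claim unproved.
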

\begin{proof}
The proof follows the same arguments as those used in Proposition 2.9.1.
\end{proof}
\begin{oss}
A generalized metric $G$ always induces a connection $\nabla ^{+}$ on $M$. Thus, one can define the generalized connection $\tilde{\nabla}$ induced by $\nabla ^{+}$ and rewrite the Nijenhuis tensor with respect to the Courant bracket as follows:
$$N_{G}(v,w)=\tilde{\nabla}_{Gv}(Gw)-\tilde{\nabla}_{Gw}(Gv)-T^{\tilde{\nabla}}(Gv,Gw)+(\tilde{\nabla}Gv)^{*}(Gw)+$$
$$-G(\tilde{\nabla}_{Gv}w-\tilde{\nabla}_{w}(Gv)-T^{\tilde{\nabla}}(Gv,w)+(\tilde{\nabla}Gv)^{*}w)-G(\tilde{\nabla}_{v}(Gw)-\tilde{\nabla}_{Gw}V+$$
$$-T^{\tilde{\nabla}}(v,Gw)+(\tilde{\nabla}v)^{*}Gw)+\tilde{\nabla}_{v}w-\tilde{\nabla}_{v}w-T^{\tilde{\nabla}}(v,w)+(\tilde{\nabla}v)^{*}w.$$
\end{oss}
\begin{lemma}
Let $G_{g}$ be the generalized metric induced by a Riemannian metric $g$ and with $b=0$. Let $\tilde{\nabla}$ be the generalized connection induced by $\nabla ^{+}$ obtained from $G_{g}$. If:
$$ \langle \tilde{\nabla}_{u}v,w\rangle =\langle \tilde{\nabla}_{G_{g}u}v,G_{g}w\rangle \hspace{0.2cm} \forall u,v,w \in C^{\infty}(E)$$
then $G_{g}$ is $[\cdot,\cdot]_{C}$-integrable.
\end{lemma}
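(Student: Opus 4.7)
The plan is to show directly that $V^+$ is involutive with respect to the Courant bracket, which by Definition 5.1.1 is exactly what $[\cdot,\cdot]_C$-integrability of $G_g$ means. The key preliminary observation is that with $b=0$ Proposition 4.2.1 gives $T^{\nabla^+}=-db=0$, so the induced affine connection $\nabla^+$ is torsion-free (it is the Levi-Civita connection of $g$). Lemma 4.3.1 then implies that the generalized torsion $T^{\tilde{\nabla}}$ of the induced generalized connection also vanishes, which will be the main computational tool.

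Next I would decode the hypothesis in terms of the orthogonal decomposition $E=V^+\oplus V^-$. Because $G_g$ is self-adjoint with respect to $\langle \cdot,\cdot\rangle$ and acts as $\pm\mathrm{Id}$ on $V^{\pm}$, specialising $u\in V^{\pm}$ and $w\in V^{\mp}$ in the identity $\langle \tilde{\nabla}_u v,w\rangle=\langle \tilde{\nabla}_{G_g u}v,G_g w\rangle$ forces $\langle \tilde{\nabla}_u v,w\rangle=-\langle \tilde{\nabla}_u v,w\rangle$, hence $\langle \tilde{\nabla}_u v,w\rangle=0$. Since $V^+$ and $V^-$ are mutually orthogonal and span $E$, this is equivalent to the statement that $\tilde{\nabla}_u v$ belongs to the same eigenbundle as $u$: if $u\in V^{\pm}$, then $\tilde{\nabla}_u v\in V^{\pm}$ for every $v\in C^{\infty}(E)$.

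Combining these two inputs, I would then evaluate the torsion-freeness identity $\langle \tilde{\nabla}_v w-\tilde{\nabla}_w v-[v,w]_C,u\rangle+\tfrac{1}{2}(\langle \tilde{\nabla}_u v,w\rangle-\langle \tilde{\nabla}_u w,v\rangle)=0$ on triples with $v,w\in V^+$ and $u\in V^-$. By the preceding step $\tilde{\nabla}_v w$ and $\tilde{\nabla}_w v$ lie in $V^+$, so they pair to zero against $u\in V^-$; similarly $\tilde{\nabla}_u v$ and $\tilde{\nabla}_u w$ lie in $V^-$, so they pair to zero against $v,w\in V^+$. What remains is $-\langle [v,w]_C,u\rangle=0$, and since $u\in V^-$ was arbitrary this forces $[v,w]_C\in(V^-)^{\perp}=V^+$. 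Hence $V^+$ is involutive and $G_g$ is $[\cdot,\cdot]_C$-integrable.

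The step I expect to be the most delicate is the second one: correctly translating the symmetric-looking identity in the hypothesis into the statement that $\tilde{\nabla}$ preserves the $V^{\pm}$-splitting in its derivation slot. Once this translation is in place, neither the full Nijenhuis expansion from Remark 5.1.1 nor the relation $\tilde{\nabla}G_g=0$ from Lemma 4.2.2 is needed; the pairing form of $T^{\tilde{\nabla}}=0$ alone closes the argument.
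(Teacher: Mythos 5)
Your proof is correct, and it takes a genuinely different route from the paper. The paper stays inside the Nijenhuis formalism: it invokes $T^{\tilde{\nabla}}=0$ and also $\tilde{\nabla}G_{g}=0$ (via $\nabla^{+}b=0$ and Lemma on $\nabla^{+}b=0\iff\tilde{\nabla}G=0$), plugs these into the expansion of $N_{G_{g}}$ from Remark 5.1.1, and shows $\langle N_{G_{g}}(v,w),u\rangle =2(\langle \tilde{\nabla}_{u}v,w\rangle -\langle \tilde{\nabla}_{G_{g}u}v,G_{g}w\rangle )$, so the hypothesis forces $N_{G_{g}}=0$ and integrability follows from the equivalence in Lemma 5.1.1. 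You bypass the Nijenhuis tensor and the parallelism $\tilde{\nabla}G_{g}=0$ altogether: you first translate the hypothesis, by specializing $u\in V^{\pm}$, $w\in V^{\mp}$ and using $G_{g}=\pm \mathrm{Id}$ on $V^{\pm}$ plus tensoriality of $\tilde{\nabla}$ in the direction slot, into the statement that $\tilde{\nabla}_{u}$ preserves the eigenbundle containing $u$; then the vanishing of $T^{\tilde{\nabla}}$ (which, as in the paper, comes from $T^{\nabla^{+}}=-db=0$ and the lemma on torsion-free connections) evaluated on $v,w\in V^{+}$, $u\in V^{-}$ leaves exactly $\langle [v,w]_{C},u\rangle =0$, i.e. $[v,w]_{C}\in (V^{-})^{\perp}=V^{+}$, which is integrability by Definition 5.1.1. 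What each approach buys: the paper's contraction identity shows the hypothesis is precisely the obstruction $\langle N_{G_{g}}(v,w),u\rangle$ up to a factor $2$, which is a sharper statement; your argument is more elementary, avoids the (typo-laden) formula of Remark 5.1.1 and Lemma 4.2.2, works directly with the definition of integrability as involutivity of $V^{+}$, and incidentally shows that only the "mixed" instances of the hypothesis (direction and test vector in opposite eigenbundles) are actually needed.
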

\begin{proof} Since $b=0$ and by Lemma 3.4.2, Lemma 3.2.2 follows that $T^{\tilde{\nabla}}=0$ and $\tilde{\nabla}G_{g}=0$. Thus, the Nijenhuis tensor of $G_{g}$ with respect to $[\cdot,\cdot]_{C}$ becomes:
$$N_{G_{g}}(v,w)=(\tilde{\nabla}Gv)^{*}(Gw)-G((\tilde{\nabla}Gv)^{*}w)-G((\tilde{\nabla}v)^{*}Gw)+(\tilde{\nabla}v)^{*}w.$$
As $G$ is orthogonal with respect to $\langle \cdot ,\cdot \rangle $ from the hypothesis it follows that:
$$\langle N_{G_{g}}(v,w),u\rangle =2(\langle \tilde{\nabla}_{u}v,w\rangle -\langle \tilde{\nabla}_{G_{g}u}v,G_{g}w\rangle )=0.$$
\end{proof}
\begin{lemma}
Let $G_{g}$ be the generalized metric induced by a Riemannian metric $g$ and with $b=0$. Then its $+1$-eigenbundle, $V^{+}$, is $[\cdot,\cdot]_{C}$-involutive if $g(\nabla _{Z}X,Y)=g(\nabla _{Z}Y,X).$
\end{lemma}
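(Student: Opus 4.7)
My plan is to check $[\cdot,\cdot]_C$-involutivity of $V^+$ directly. Since $b=0$, the $+1$-eigenbundle has the concrete description $V^+=\{X+gX:X\in C^\infty(TM)\}$, and by Proposition 4.2.1 together with $\nabla^+ g=0$, the connection $\nabla^+$ is the Levi-Civita connection of $g$. I would take arbitrary sections $v=X+gX$ and $w=Y+gY$ of $V^+$ and compute $[v,w]_C$. Symmetry of $g$ gives $\iota_X(gY)=g(X,Y)=\iota_Y(gX)$, so the exact correction term in the Courant bracket drops out, leaving only the Lie-derivative contributions together with the vector part $[X,Y]$. Involutivity of $V^+$ thereby reduces to showing that the one-form part of $[v,w]_C$ equals $g[X,Y]$.

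Next, I would pair the one-form identity with an arbitrary $Z\in C^\infty(TM)$ and expand each Lie derivative via $(\mathcal{L}_X\omega)(Z)=X(\omega(Z))-\omega([X,Z])$. Using metric compatibility of $\nabla^+$ and the torsion-free identity $[X,Z]=\nabla_X Z-\nabla_Z X$ gives the clean formula $(\mathcal{L}_X(gY))(Z)=g(\nabla_X Y,Z)+g(Y,\nabla_Z X)$, and symmetrically for $\mathcal{L}_Y(gX)$.

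Subtracting the two expansions, the $\nabla_X Y-\nabla_Y X$ terms reassemble into $g([X,Y],Z)$ via torsion-freeness, and the surviving asymmetric terms $g(Y,\nabla_Z X)-g(X,\nabla_Z Y)$ are exactly what the hypothesis $g(\nabla_Z X,Y)=g(\nabla_Z Y,X)$ controls. The main step is the careful bookkeeping of the Levi-Civita identities together with the factors arising from the Courant-bracket definition; once those are in place, the symmetry hypothesis closes the argument and yields the desired involutivity.
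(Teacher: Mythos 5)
Your proposal is correct and follows essentially the same route as the paper: compute $[X+gX,Y+gY]_{C}$, observe the exact term vanishes by symmetry of $g$, and reduce involutivity to the identity $(\mathcal{L}_{X}(gY)-\mathcal{L}_{Y}(gX))(Z)=g([X,Y],Z)+g(\nabla_{Z}X,Y)-g(\nabla_{Z}Y,X)$, which the hypothesis closes. The only cosmetic difference is that you expand the Lie derivatives directly using metric compatibility and torsion-freeness of $\nabla^{+}$, whereas the paper first rewrites them via Cartan's formula as $\iota_{X}d(gY)-\iota_{Y}d(gX)$; the computation is the same.
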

\begin{proof}
$$ [X+gX,Y+gY]_{C}=[X,Y]+\mathcal{L}_{X}(gY)-\mathcal{L}_{Y}(gX)=[X,Y]+\iota _{X}(d(g(Y))-\iota _{Y}(d(g(X)))$$
and from
$$\iota _{X}(d(g(Y))-\iota _{Y}(d(g(X)))(Z)=g([X,Y],Z)+g(\nabla _{Z}X,Y)-g(\nabla _{Z}Y,X)$$ 
by using $g(\nabla _{Z}X,Y)=g(\nabla _{Z}Y,X)$ we obtain the involutivity of $V^{+}$.
\end{proof}
\begin{corollary}
Let $G_{g}$ be the generalized metric defined by $(g,0)$ where $g$ is a Riemannian metric. The following condition is sufficient for the $[\cdot,\cdot]_{C}$-integrability of $G_{g}$:
%\begin{nicenum}[label=\roman*.]
%\item 
$$\beta (Z):=g(\nabla _{Z}^{+}X,Y)-g(\nabla _{Z}^{+}Y,X)=0 \hspace{0.3cm} \forall Z \in C^{\infty}(TM).$$
%\item $\nabla _{Z}^{+}$ is $g$-self-adjoint $\forall Z \in C^{\infty}(TM)$, that is $g(\nabla _{Z}Y,X)=g(\nabla _{Z}X,Y)$ \\ $\forall X,Y,Z \in C^{\infty}(TM)$.
%\end{nicenum}
\end{corollary}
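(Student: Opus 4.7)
The plan is to recognize that this corollary is essentially an instance of Lemma 5.1.2 applied to the specific affine connection $\nabla^{+}$ associated to $G_{g}$. The only non-trivial step is to verify that $\nabla^{+}$ satisfies the structural hypotheses implicit in Lemma 5.1.2 (namely $g$-compatibility and torsion-freeness, which are what allow one to rewrite $\iota_{X}d(gY)(Z)-\iota_{Y}d(gX)(Z)$ as $g([X,Y],Z)+g(\nabla_{Z}X,Y)-g(\nabla_{Z}Y,X)$).

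First I would invoke Proposition 4.3.1: since the generalized metric in question is given by the pair $(g,0)$, the induced affine connection $\nabla^{+}$ is compatible with $g$ and its torsion is $T^{\nabla^{+}}=-db=0$. Hence $\nabla^{+}$ is precisely the Levi-Civita connection of $g$, so in particular the identity used in the proof of Lemma 5.1.2,
\begin{equation*}
\bigl(\iota_{X}d(gY)-\iota_{Y}d(gX)\bigr)(Z)=g([X,Y],Z)+g(\nabla^{+}_{Z}X,Y)-g(\nabla^{+}_{Z}Y,X),
\end{equation*}
is available with $\nabla$ replaced by $\nabla^{+}$.

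Next, substituting the hypothesis $\beta(Z)=g(\nabla^{+}_{Z}X,Y)-g(\nabla^{+}_{Z}Y,X)=0$ into the computation of $[X+gX,Y+gY]_{C}$ carried out in the proof of Lemma 5.1.2 shows that the $1$-form part of $[X+gX,Y+gY]_{C}$ equals $g([X,Y],\cdot)$. Hence $[X+gX,Y+gY]_{C}\in C^{\infty}(V^{+})$, which is the definition of $V^{+}$ being $[\cdot,\cdot]_{C}$-involutive. By Definition 5.1.1 this is exactly $[\cdot,\cdot]_{C}$-integrability of $G_{g}$, concluding the proof.

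There is really no obstacle here beyond bookkeeping: the content has already been distilled into Lemma 5.1.2, and Proposition 4.3.1 ensures $\nabla^{+}$ is the Levi-Civita connection, so the ''affine connection $\nabla$'' appearing in the lemma can legitimately be taken to be the canonical $\nabla^{+}$ attached to the generalized metric.
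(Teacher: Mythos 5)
Your argument is correct and is essentially the paper's own (implicit) proof: the corollary is just the immediately preceding lemma (Lemma 5.1.3, not 5.1.2 as you cite; similarly the torsion/compatibility result is Proposition 4.2.1) applied with $\nabla=\nabla^{+}$, combined with Definition 5.1.1. Your extra step---checking via that proposition that for $b=0$ the connection $\nabla^{+}$ is $g$-compatible and torsion-free, hence Levi-Civita, so the identity $(\iota_{X}d(gY)-\iota_{Y}d(gX))(Z)=g([X,Y],Z)+g(\nabla^{+}_{Z}X,Y)-g(\nabla^{+}_{Z}Y,X)$ is legitimate---is exactly the justification the paper leaves tacit, so there is no gap.
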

\hspace{-0.6cm} We now study the relation between the Nijenhuis tensor with respect to $[\cdot,\cdot]_{C}$ and the Nijenhuis operator with respect to $[\cdot,\cdot]_{D}$ for $G$. Note that $N_{G}^{[\cdot,\cdot]_{D}}$ is not a priori skew-symmetric, indeed:
$$N_{G}^{[\cdot,\cdot]_{D}}(v,w)=-N_{G}^{[\cdot,\cdot]_{D}}(v,w)+2(d\langle v,w \rangle -G(d\langle Gv,w\rangle )).$$
Moreover, knowing that $[v,w]_{C}=\frac{1}{2}([v,w]_{D}-[w,v]_{D})$, we obtain that:
$$N_{G}(v,w)=\frac{1}{2}(N_{G}^{[\cdot,\cdot]_{D}}(v,w)-N_{G}^{[\cdot,\cdot]_{D}}(w,v)).$$
\begin{lemma}
Let $G$ be a generalized metric on $E$. $G$ is $[\cdot,\cdot]_{C}$-integrable if and only if $N_{G}^{[\cdot,\cdot]_{D}}$ is symmetric.
\end{lemma}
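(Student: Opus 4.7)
The statement is essentially a direct consequence of the two identities recorded immediately before the lemma. My plan is to use them as follows.

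First, I would invoke Lemma 5.1.1, which states that $G$ is $[\cdot,\cdot]_{C}$-integrable if and only if its Nijenhuis tensor with respect to the Courant bracket, $N_{G}$, vanishes identically. This reduces the problem to showing that $N_{G}=0$ is equivalent to the symmetry of $N_{G}^{[\cdot,\cdot]_{D}}$.

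Next, I would rely on the identity displayed just above the lemma, namely
$$N_{G}(v,w)=\tfrac{1}{2}\bigl(N_{G}^{[\cdot,\cdot]_{D}}(v,w)-N_{G}^{[\cdot,\cdot]_{D}}(w,v)\bigr),$$
which is itself an immediate consequence of $[v,w]_{C}=\tfrac{1}{2}([v,w]_{D}-[w,v]_{D})$ together with the definition of the Nijenhuis operator (each of the four terms in $N_{G}$ expands linearly in the bracket, so one obtains the skew-symmetrization of $N_{G}^{[\cdot,\cdot]_{D}}$).

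From this identity the conclusion is immediate in both directions: if $N_{G}^{[\cdot,\cdot]_{D}}(v,w)=N_{G}^{[\cdot,\cdot]_{D}}(w,v)$ for all $v,w$, then the right-hand side vanishes and so $N_{G}=0$, giving integrability; conversely, if $N_{G}=0$ then $N_{G}^{[\cdot,\cdot]_{D}}(v,w)-N_{G}^{[\cdot,\cdot]_{D}}(w,v)=0$, which is precisely the symmetry of $N_{G}^{[\cdot,\cdot]_{D}}$.

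There is no real obstacle; the only subtle point worth pausing on is that the Dorfman Nijenhuis operator is not a priori skew-symmetric (as emphasized in the preceding remark), so ``symmetric'' here is a genuine condition and not automatic. Consequently the proof reduces to citing Lemma 5.1.1 and the skew-symmetrization identity, and the main text can be kept to a couple of lines.
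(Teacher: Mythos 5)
Your proposal is correct and follows the paper's own argument exactly: it reduces integrability to $N_{G}=0$ via Lemma 5.1.1 and then applies the identity $N_{G}(v,w)=\tfrac{1}{2}\bigl(N_{G}^{[\cdot,\cdot]_{D}}(v,w)-N_{G}^{[\cdot,\cdot]_{D}}(w,v)\bigr)$ stated just before the lemma. No differences worth noting.
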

\begin{proof}
$G$ is $[\cdot,\cdot]_{C}$-integrable $\iff N_{G}=0 \iff N_{G}^{[\cdot,\cdot]_{D}}(v,w)=N_{G}^{[\cdot,\cdot]_{D}}(w,v) \hspace{0.2cm}$ \\ $ \forall v,w \in C^{\infty}(E).$
\end{proof}
\begin{proposition}
Let $G$ be a generalized metric on $E$ defined by $(g,b)$ and let $G_{g}$ be the generalized metric induced by $(g,0)$. If $db=0$ and $G_{g}$ is $[\cdot,\cdot]_{C}$-integrable then $G$ is $[\cdot,\cdot]_{C}$-integrable.
\end{proposition}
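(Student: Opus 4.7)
The plan is to reduce everything to the behaviour of the $b$-field transform on eigenbundles and on the Courant bracket. The crucial inputs are already in place: Definition 2.1.4 tells us that when $b$ is closed the map $e^{b}$ is an orthogonal automorphism of $E$ that preserves $[\cdot,\cdot]_{C}$, and Remark 3.0.2 gives the factorization $G=e^{-b}G_{g}e^{b}$.

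First I would identify the $+1$-eigenbundle of $G$ in terms of that of $G_{g}$. From $G=e^{-b}G_{g}e^{b}$, the condition $Gu=u$ is equivalent to $G_{g}(e^{b}u)=e^{b}u$, so
\[
V^{+}=e^{-b}(V_{g}^{+}),
\]
where $V_{g}^{+}$ denotes the $+1$-eigenbundle of $G_{g}$. This step is purely algebraic and uses only that $e^{b}$ is a bundle automorphism of $E$.

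Next, I would invoke the integrability hypothesis on $G_{g}$: by Definition 5.1.1 the subbundle $V_{g}^{+}$ is involutive with respect to $[\cdot,\cdot]_{C}$. Every section of $V^{+}$ can be written as $e^{-b}v'$ for some $v'\in C^{\infty}(V_{g}^{+})$. Since $db=0$, Definition 2.1.4 guarantees that $e^{-b}$ is a $b$-field transform, hence
\[
[e^{-b}v',\,e^{-b}w']_{C}=e^{-b}[v',w']_{C}
\]
for all $v',w'\in C^{\infty}(E)$. Applying this to sections of $V_{g}^{+}$ and using its $[\cdot,\cdot]_{C}$-involutivity yields $[v,w]_{C}\in C^{\infty}(e^{-b}(V_{g}^{+}))=C^{\infty}(V^{+})$ for all $v,w\in C^{\infty}(V^{+})$, which is exactly $[\cdot,\cdot]_{C}$-integrability of $G$ in the sense of Definition 5.1.1.

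There is no real analytic obstacle here: the argument is a two-line consequence of the equivariance of the Courant bracket under closed $b$-field transforms, once the eigenbundle identification $V^{+}=e^{-b}(V_{g}^{+})$ is noted. The only point to be careful about is invoking Definition 2.1.4 with the sign $-b$ rather than $b$, which is harmless since $d(-b)=0$ as well, so $e^{-b}$ is itself a $b$-field transform.
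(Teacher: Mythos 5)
Your proposal is correct and is essentially the paper's own argument: the paper likewise writes $G=e^{-b}G_{g}e^{b}$ and notes that, since $db=0$, the maps $e^{\pm b}$ are $b$-field transforms preserving Courant-bracket integrability. You merely spell out the two steps the paper leaves implicit (the eigenbundle identification $V^{+}=e^{-b}(V_{g}^{+})$ and the bracket equivariance $[e^{-b}v',e^{-b}w']_{C}=e^{-b}[v',w']_{C}$), which is a faithful elaboration rather than a different route.
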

\begin{proof}
We observe that $G=e^{-b}G_{g}e^{b}$. Since $db=0$, $e^{b}$ and $e^{-b}$ are $b$-field transforms that preserve the integrability of $G_{g}$. So $G$ is $[\cdot,\cdot]_{C}$-integrable.
\end{proof}
\begin{lemma}
Let $G$ be a generalized metric on $E$ and let $(g,b,\nabla ^{+})$ be the induced structures. Let $\tilde{\nabla}$ be the generalized connection induced by $\nabla ^{+}$. If $\nabla ^{+}b=0$ and 
$$2(\langle \tilde{\nabla}_{u}v,w\rangle -\langle \tilde{\nabla}_{Gu}v,Gw\rangle )-T^{\tilde{\nabla}}(Gv,Gw,u)+ T^{\tilde{\nabla}}(Gv,w,Gu)+$$
$$+ T^{\tilde{\nabla}}(v,Gw,Gu)- T^{\tilde{\nabla}}(v,w,u)=0 \hspace{0.35cm} \forall u,v,w \in C^{\infty}(E)$$ then $G$ is $[\cdot,\cdot]_{C}$-integrable.
\end{lemma}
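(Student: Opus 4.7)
The plan is to start from the expression for the Nijenhuis tensor $N_G$ given in Remark 5.1.1, use the hypothesis $\nabla^{+}b=0$ to collapse most of the terms via Lemma 4.2.2, and then pair the remainder with an arbitrary $u\in C^{\infty}(E)$ to recognise exactly the expression on the left-hand side of the hypothesis.

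First I would invoke Lemma 4.2.2 to translate the assumption $\nabla^{+}b=0$ into the statement $\tilde{\nabla}G=0$, i.e.\ $\tilde{\nabla}_{u}(Gv)=G(\tilde{\nabla}_{u}v)$ for every $u,v\in C^{\infty}(E)$. Substituting this identity into the eight ``connection terms'' of the Nijenhuis formula in Remark 5.1.1 produces four cancellations: the pairs $\tilde{\nabla}_{Gv}(Gw)$ and $-G\tilde{\nabla}_{Gv}w$, $-\tilde{\nabla}_{Gw}(Gv)$ and $G\tilde{\nabla}_{Gw}v$, $G\tilde{\nabla}_{w}(Gv)$ and $-\tilde{\nabla}_{w}v$, $-G\tilde{\nabla}_{v}(Gw)$ and $\tilde{\nabla}_{v}w$ all annihilate (using $G^{2}=\mathrm{Id}$). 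What survives is only the four torsion contributions and the four adjoint terms of the form $(\tilde{\nabla}\,\cdot\,)^{*}$.

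Next I would pair $N_{G}(v,w)$ with an arbitrary $u$ in the pairing $\langle\cdot,\cdot\rangle$, using self-adjointness of $G$ (which follows from $G^{2}=\mathrm{Id}$ together with the orthogonality property $\langle Gu,Gv\rangle=\langle u,v\rangle$). For the torsion terms, $\langle GT^{\tilde{\nabla}}(Gv,w),u\rangle=T^{\tilde{\nabla}}(Gv,w,Gu)$ and similar identities produce the four-term combination
\[
-T^{\tilde{\nabla}}(Gv,Gw,u)+T^{\tilde{\nabla}}(Gv,w,Gu)+T^{\tilde{\nabla}}(v,Gw,Gu)-T^{\tilde{\nabla}}(v,w,u).
\]
For the adjoint terms, I would use the defining relation $\langle u,(\tilde{\nabla}v)^{*}w\rangle=\langle\tilde{\nabla}_{u}v,w\rangle$ combined once more with $\tilde{\nabla}G=0$ and the orthogonality of $G$; each of the four adjoint contributions reduces to either $\langle\tilde{\nabla}_{u}v,w\rangle$ or $\langle\tilde{\nabla}_{Gu}v,Gw\rangle$, and they assemble precisely into $2(\langle\tilde{\nabla}_{u}v,w\rangle-\langle\tilde{\nabla}_{Gu}v,Gw\rangle)$.

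Adding the two contributions, the right-hand side coincides with the left-hand side of the hypothesis, hence $\langle N_{G}(v,w),u\rangle=0$ for all $u$. Since $\langle\cdot,\cdot\rangle$ is non-degenerate, $N_{G}=0$, and by Lemma 5.1.1 the metric $G$ is $[\cdot,\cdot]_{C}$-integrable. The main obstacle, as one would expect, is not conceptual but bookkeeping: one has to track eight cancellations and carefully move $G$ across the pairing in each of the eight surviving terms without sign or index errors, and verify that the single hypothesis ``$\tilde{\nabla}G=0$'' really is strong enough to reduce every occurrence of $\tilde{\nabla}_{Gv}(Gw)$ into the prescribed form.
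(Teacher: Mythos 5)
Your proposal is correct and follows essentially the same route as the paper: the paper's proof likewise uses Lemma 4.2.2 to get $\tilde{\nabla}G=0$ from $\nabla^{+}b=0$ and then contracts the formula of Remark 5.1.1 with $\langle\cdot,\cdot\rangle$, which is exactly the cancellation-and-pairing bookkeeping you carry out explicitly. Your detailed tracking of the torsion and adjoint terms matches the intended computation (modulo an evident typo in Remark 5.1.1, where the last connection term should read $-\tilde{\nabla}_{w}v$), so nothing further is needed.
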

\begin{proof}
If $\nabla ^{+}b=0$, then $\tilde{\nabla}G=0.$ Thus, from the formula in Remark 5.1.1, by "contracting" it with $\langle \cdot ,\cdot \rangle$, the thesis follows.
\end{proof}
\begin{proposition}
Let $G$ be a generalized metric on $E$ and $(g,b,\nabla ^{+})$ the induced structures. Let $\tilde{\nabla}$ be the generalized connection induced by $\nabla ^{+}$. If $\nabla ^{+}b=0$, $db=0$ and $\langle \tilde{\nabla}_{u}v,w\rangle =\langle \tilde{\nabla}_{Gu}v,Gw\rangle$ then $G$ is $[\cdot,\cdot]_{C}$-integrable.
\end{proposition}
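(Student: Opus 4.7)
The plan is to invoke Lemma 5.1.3 directly, since that lemma already handles the case $\nabla^{+}b=0$ and reduces integrability to the vanishing of a single expression built out of the inner product and the torsion $T^{\tilde{\nabla}}$. One of the two hypotheses, $\nabla^{+}b=0$, is assumed outright, so the entire task reduces to verifying the displayed identity
$$2(\langle \tilde{\nabla}_{u}v,w\rangle -\langle \tilde{\nabla}_{Gu}v,Gw\rangle )-T^{\tilde{\nabla}}(Gv,Gw,u)+T^{\tilde{\nabla}}(Gv,w,Gu)+T^{\tilde{\nabla}}(v,Gw,Gu)-T^{\tilde{\nabla}}(v,w,u)=0.$$

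The crucial observation is that $db=0$ collapses all four torsion terms. Indeed, Proposition 4.1.1 gives $T^{\nabla^{+}}=-db$, so $db=0$ forces $\nabla^{+}$ to be a torsion-free affine connection on $TM$. Lemma 4.3.1 then guarantees that the generalized connection $\tilde{\nabla}$ induced by such a $\nabla^{+}$ also has vanishing torsion, i.e.\ $T^{\tilde{\nabla}}=0$. Substituting into the identity above eliminates every term except the first, leaving
$$2(\langle \tilde{\nabla}_{u}v,w\rangle -\langle \tilde{\nabla}_{Gu}v,Gw\rangle )=0,$$
which is exactly the remaining hypothesis $\langle \tilde{\nabla}_{u}v,w\rangle =\langle \tilde{\nabla}_{Gu}v,Gw\rangle$. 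With both hypotheses of Lemma 5.1.3 satisfied, the conclusion that $G$ is $[\cdot,\cdot]_{C}$-integrable is immediate.

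The proof is essentially an assembly argument, so I do not anticipate any real obstacle beyond the routine chain $db=0\Rightarrow T^{\nabla^{+}}=0\Rightarrow T^{\tilde{\nabla}}=0$. The proposition is best viewed as a clean corollary of Lemma 5.1.3: the extra hypothesis $db=0$ is precisely what is needed to discard the torsion bookkeeping and isolate the compatibility condition between $\tilde{\nabla}$ and $G$ as the sole integrability obstruction.
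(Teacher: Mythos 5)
Your proposal is correct and matches the paper's own proof essentially verbatim: the paper likewise uses $db=0$ to get $T^{\nabla^{+}}=-db=0$, invokes the lemma that a torsion-free affine connection induces a generalized connection with $T^{\tilde{\nabla}}=0$, and then substitutes into the preceding lemma (the one with hypothesis $\nabla^{+}b=0$ and the displayed torsion identity) so that only the compatibility condition $\langle \tilde{\nabla}_{u}v,w\rangle =\langle \tilde{\nabla}_{Gu}v,Gw\rangle$ remains. The only discrepancies are in the internal cross-reference numbers you cite, not in the argument itself.
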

\begin{proof}
If $db=0$, then $T^{\nabla ^{+}}=0$ and by Lemma 4.3.2, $T^{\tilde{\nabla}}=0$. Substituting into previous lemma, the thesis follows.
\end{proof}

\subsection{$[\cdot,\cdot]_{\nabla^{+}}$-integrability}
\begin{definition}
A generalized metric $G$ is said to be \emph{$[\cdot,\cdot]_{\nabla ^{+}}$-integrable}
% if $V^{+}$ is involutive with respect to $[\cdot,\cdot]_{\nabla ^{+}}$ where $\nabla ^{+}$ is the affine connection on $M$ induced by $G$. Equivalently, $G$ is $[\cdot,\cdot]_{\nabla ^{+}}$-integrable 
if its Nijenhuis tensor with respect to the bracket induced by the connection $\nabla ^{+}$ arising from $G$, $N_{G}^{[\cdot,\cdot]_{\nabla ^{+}}}$, vanishes.
\end{definition}
\begin{oss}
By using the definition of the Nijenhuis tensor, we have that $G$ is $[\cdot,\cdot]_{\nabla^{+}}$-integrable if and only if:
$$ N_{G}^{[\cdot,\cdot]_{\nabla ^{+}}}(v,w)=[Gv,Gw]_{\nabla ^{+}}-G[Gv,w]_{\nabla ^{+}}-G[v,Gw]_{\nabla ^{+}}+[v,w]_{\nabla ^{+}}=$$
$$=\tilde{\nabla}_{Gv}(Gw)-\tilde{\nabla}_{Gw}(Gv)-T^{\tilde{\nabla}}_{[\cdot,\cdot]_{\nabla ^{+}}}(Gv,Gw)
-G(\tilde{\nabla}_{Gv}w-\tilde{\nabla}_{w}(Gv)+$$
$$-T^{\tilde{\nabla}}_{[\cdot,\cdot]_{\nabla ^{+}}}(Gv,w))-G(\tilde{\nabla}_{v}(Gw)-\tilde{\nabla}_{Gw}V
-T^{\tilde{\nabla}}_{[\cdot,\cdot]_{\nabla ^{+}}}(v,Gw))+\tilde{\nabla}_{v}w+$$
$$-\tilde{\nabla}_{v}w-T^{\tilde{\nabla}}_{[\cdot,\cdot]_{\nabla ^{+}}}(v,w)=0 \hspace{0.3cm} \forall v,w \in C^{\infty}(E)$$
where $\tilde{\nabla}$ is the generalized connection induced by $\nabla ^{+}$ arising from $G$.
\end{oss}
\begin{lemma}
A generalized metric $G$ is $[\cdot,\cdot]_{\nabla ^{+}}$-integrable if and only if both of its eigenspaces $V_{\pm}$ are involutive.
\end{lemma}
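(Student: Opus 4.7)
The plan is to mirror the argument for Proposition 2.9.1 (ii), adapted to the real eigenvalues $\pm 1$ of $G$ rather than the complex eigenvalues $\pm i$ of $\mathfrak{J}$. The crucial preliminary remark, already stressed in the proof of Proposition 2.9.1, is that $[\cdot,\cdot]_{\nabla^+}$ is \emph{not} compatible with the natural pairing $\langle\cdot,\cdot\rangle$, so involutivity of $V^+$ does not automatically give involutivity of $V^-$; this is precisely why both eigenbundles must appear in the statement.

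The proof splits into the two implications. For the forward direction, I would simply evaluate $N_G^{[\cdot,\cdot]_{\nabla^+}}(v,w)$ on sections of a single eigenbundle. If $v,w\in C^{\infty}(V^+)$, then $Gv=v$, $Gw=w$, and the Nijenhuis formula collapses to
\[
N_G^{[\cdot,\cdot]_{\nabla^+}}(v,w)=2(I-G)[v,w]_{\nabla^+},
\]
whose vanishing is equivalent to $[v,w]_{\nabla^+}\in V^+$. Analogously, for $v,w\in C^{\infty}(V^-)$ one gets $N_G^{[\cdot,\cdot]_{\nabla^+}}(v,w)=2(I+G)[v,w]_{\nabla^+}$, forcing $[v,w]_{\nabla^+}\in V^-$. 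Thus $N_G^{[\cdot,\cdot]_{\nabla^+}}=0$ implies involutivity of both $V^\pm$.

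For the converse, I would use the $\mathbb{R}$-bilinearity of $N_G^{[\cdot,\cdot]_{\nabla^+}}$ (which is automatic from $\mathbb{R}$-bilinearity of $[\cdot,\cdot]_{\nabla^+}$ and $\mathbb{R}$-linearity of $G$) together with the smooth splitting $E=V^+\oplus V^-$ to write any sections $v=v^++v^-$, $w=w^++w^-$ and expand
\[
N_G^{[\cdot,\cdot]_{\nabla^+}}(v,w)=\sum_{\epsilon,\delta\in\{+,-\}} N_G^{[\cdot,\cdot]_{\nabla^+}}(v^{\epsilon},w^{\delta}).
\]
The two pure terms $(+,+)$ and $(-,-)$ vanish by the assumed involutivity combined with the identities derived above. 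For the mixed terms, a direct computation with $Gv^+=v^+$ and $Gw^-=-w^-$ yields
\[
N_G^{[\cdot,\cdot]_{\nabla^+}}(v^+,w^-)=-[v^+,w^-]_{\nabla^+}-G[v^+,w^-]_{\nabla^+}+G[v^+,w^-]_{\nabla^+}+[v^+,w^-]_{\nabla^+}=0,
\]
and symmetrically for $(v^-,w^+)$; so the mixed cases vanish identically, without any further hypothesis.

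There is no real obstacle here: the argument is purely algebraic, the only subtle point being the observation that the mixed-type Nijenhuis contributions vanish automatically. Once this is noted, the equivalence is immediate from the eigenbundle decomposition and the simple forms of $N_G^{[\cdot,\cdot]_{\nabla^+}}$ on pure eigensections.
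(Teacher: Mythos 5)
Your proof is correct and is essentially the argument the paper intends: the paper's proof just says "follows the same arguments as those used in Proposition 2.9.1," and your eigenbundle computation (pure terms giving $2(I\mp G)[v,w]_{\nabla^+}$, mixed terms vanishing identically, with the key remark that $[\cdot,\cdot]_{\nabla^+}$ is not pairing-compatible so both $V^{\pm}$ must be assumed involutive) is exactly that argument written out in detail.
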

\begin{proof}
The proof follows the same arguments as those used in Proposition 2.9.1.
\end{proof}
\begin{lemma}
Let $G_{g}$ be a generalized metric and $(g,b=0,\nabla ^{+})$ the induced structures. $G_{g}$ is $[\cdot,\cdot]_{\nabla^{+}}$-integrable.
\end{lemma}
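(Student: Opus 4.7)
By Lemma 5.2.1, it suffices to verify that both eigenbundles $V^{\pm}$ of $G_g$ are involutive with respect to $[\cdot,\cdot]_{\nabla^+}$. Since $b=0$, the graphs of $\pm g$ give the explicit description
$$V^{\pm}=\{X\pm gX\mid X\in C^{\infty}(TM)\}.$$
The plan is therefore to compute $[X\pm gX,Y\pm gY]_{\nabla^+}$ directly from the definition of the connection-induced bracket and to show it has the form $Z\pm gZ$ for a suitable vector field $Z$.

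The key ingredients are the two properties of $\nabla^+$ that we extract from the hypotheses. First, by Proposition 4.3.1 the affine connection $\nabla^+$ is metric: $\nabla^+ g=0$. Viewing $gY$ as the $1$-form $(gY)(Z)=g(Y,Z)$, compatibility gives at once
$$(\nabla^+_X(gY))(Z)=X(g(Y,Z))-g(Y,\nabla^+_X Z)=g(\nabla^+_X Y,Z),$$
so $\nabla^+_X(gY)=g(\nabla^+_X Y)$. Second, since $b=0$, Proposition 4.3.1 gives $T^{\nabla^+}=-db=0$, hence $\nabla^+_X Y-\nabla^+_Y X=[X,Y]$.

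Plugging these into the defining formula for $[\cdot,\cdot]_{\nabla^+}$ yields
$$[X\pm gX,Y\pm gY]_{\nabla^+}=[X,Y]\pm\bigl(g(\nabla^+_X Y)-g(\nabla^+_Y X)\bigr)=[X,Y]\pm g([X,Y]),$$
which is a section of $V^{\pm}$. Thus both $V^+$ and $V^-$ are $[\cdot,\cdot]_{\nabla^+}$-involutive, and Lemma 5.2.1 concludes the proof.

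There is no serious obstacle here: the argument reduces to combining metric compatibility and torsion-freeness of $\nabla^+$, both of which are automatic when $b=0$. The only thing to be careful about is distinguishing the (2,1)-torsion $T^{\nabla^+}(X,Y)$ from its metric dual $3$-tensor appearing in Proposition 4.3.1, but since $g$ is non-degenerate the vanishing of one is equivalent to the vanishing of the other.
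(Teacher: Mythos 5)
Your proof is correct, but it takes a genuinely different route from the paper's. You invoke Lemma 5.2.1 and verify directly that the graphs $V^{\pm}=\{X\pm gX\mid X\in C^{\infty}(TM)\}$ are $[\cdot,\cdot]_{\nabla^{+}}$-involutive, reducing everything to the two facts $\nabla^{+}g=0$ and $T^{\nabla^{+}}=-db=0$; the computation $[X\pm gX,Y\pm gY]_{\nabla^{+}}=[X,Y]\pm g(\nabla^{+}_{X}Y-\nabla^{+}_{Y}X)=[X,Y]\pm g([X,Y])$ is exactly right, and checking both eigenbundles is precisely what Lemma 5.2.1 requires. The paper never touches the graphs: it observes that $b=0$ forces $T^{\nabla^{+}}=0$ and hence $T^{\tilde{\nabla}}_{[\cdot,\cdot]_{\nabla^{+}}}=0$, and that $\nabla^{+}b=0$ holds trivially, so $\tilde{\nabla}G_{g}=0$ by Lemma 4.2.2; substituting into the expression of $N_{G_{g}}^{[\cdot,\cdot]_{\nabla^{+}}}$ from Remark 5.2.1 in terms of $\tilde{\nabla}$ and this torsion then makes all terms cancel. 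Your argument is more elementary and self-contained, avoiding the generalized-connection formalism altogether, whereas the paper's tensorial formulation is the one that carries over to the subsequent statements with $b\neq 0$, where $\tilde{\nabla}G=0$ and the surviving torsion terms are exactly what must be controlled. One bookkeeping slip: metric compatibility and the torsion formula $T^{\nabla^{+}}=-db$ are Proposition 4.2.1 in the paper, not ``Proposition 4.3.1'' (which does not exist); the content you use is nonetheless the correct one.
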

\begin{proof}
Since $b=0$ we obtain $T^{\nabla ^{+}}=0$ and so $T_{[\cdot,\cdot]_{\nabla ^{+}}}^{\tilde{\nabla}}=0$. Furthermore by Lemma 4.2.2 we obtain $\tilde{\nabla}G=0$ and computing the Nijenhuis tensor we have $N_{G_{g}}^{[\cdot,\cdot]_{\nabla^{+}}}=0.$
\end{proof}
\begin{lemma}
Let $G$ be a generalized metric on $E$ and $(g,b,\nabla^{+})$ the induced structures, or $G=e^{-b}G_{g}e^{b}$. Then $G_{g}$ is $[\cdot,\cdot]_{\nabla^{+}}$-integrable $\iff$ $db=0$.
\end{lemma}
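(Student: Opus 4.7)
The plan is to reduce the biconditional to a concrete involutivity check on the two eigenbundles of $G_{g}$, and then identify the obstruction with the torsion of $\nabla^{+}$, which by Proposition 4.2.1 is exactly $-db$. By Lemma 5.2.1, $G_{g}$ is $[\cdot,\cdot]_{\nabla^{+}}$-integrable if and only if both $V_{g}^{\pm}=\{X\pm gX:X\in C^{\infty}(TM)\}$ are involutive with respect to $[\cdot,\cdot]_{\nabla^{+}}$. The subtle point — and the one worth flagging at the outset — is that $\nabla^{+}$ is the connection induced by $G=e^{-b}G_{g}e^{b}$, not by $G_{g}$ itself; so a priori $\nabla^{+}$ has nonzero torsion. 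Nevertheless, by Proposition 4.2.1 this $\nabla^{+}$ is still compatible with the same underlying Riemannian metric $g$, and satisfies $g(T^{\nabla^{+}}(X,Y),Z)=-db(X,Y,Z)$.

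The next step is a direct computation. Using $\nabla^{+}g=0$ one has, for any $X,Y\in C^{\infty}(TM)$,
\begin{equation*}
[X\pm gX,Y\pm gY]_{\nabla^{+}}=[X,Y]+\nabla^{+}_{X}(\pm gY)-\nabla^{+}_{Y}(\pm gX)=[X,Y]\pm g\bigl(\nabla^{+}_{X}Y-\nabla^{+}_{Y}X\bigr).
\end{equation*}
Expanding $\nabla^{+}_{X}Y-\nabla^{+}_{Y}X=[X,Y]+T^{\nabla^{+}}(X,Y)$ gives
\begin{equation*}
[X\pm gX,Y\pm gY]_{\nabla^{+}}=\bigl([X,Y]\pm g[X,Y]\bigr)\pm g\bigl(T^{\nabla^{+}}(X,Y)\bigr).
\end{equation*}
The first term on the right lies in $V_{g}^{\pm}$, so the bracket lies in $V_{g}^{\pm}$ if and only if $g(T^{\nabla^{+}}(X,Y))=0$ as a $1$-form.

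From here the conclusion is immediate. By non-degeneracy of $g$, the vanishing of $g(T^{\nabla^{+}}(X,Y))$ for all $X,Y$ is equivalent to $T^{\nabla^{+}}=0$, and by the torsion formula of Proposition 4.2.1 this is equivalent to $db=0$. Since the same scalar condition controls both signs simultaneously, both $V_{g}^{+}$ and $V_{g}^{-}$ become $[\cdot,\cdot]_{\nabla^{+}}$-involutive at once, giving the biconditional. No step is genuinely hard; the only potential obstacle is bookkeeping — in particular, keeping clear that the connection at play is the one associated to $G$, not to $G_{g}$, so that one is really using the $g$-compatibility and the torsion identity in their proper form.
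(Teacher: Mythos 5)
Your proof is correct, and it takes a genuinely workable route that differs in packaging from the paper's. The paper argues at the level of the Nijenhuis tensor: using $\nabla^{+}g=0$ (so that the generalized connection $\tilde{\nabla}$ induced by $\nabla^{+}$ satisfies $\tilde{\nabla}G_{g}=0$), it expands $N_{G_{g}}^{[\cdot,\cdot]_{\nabla^{+}}}$ into torsion terms of $\tilde{\nabla}$ with respect to $[\cdot,\cdot]_{\nabla^{+}}$; for the implication from $db=0$ these terms vanish, and for the converse the computation reduces to $T^{\nabla^{+}}(g^{-1}\xi,Y)+T^{\nabla^{+}}(X,g^{-1}\eta)$, forcing $T^{\nabla^{+}}=-db=0$. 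You instead invoke the eigenbundle characterization (the paper's Lemma 5.2.1) and compute the $[\cdot,\cdot]_{\nabla^{+}}$-bracket of the graph sections $X\pm gX$ directly, identifying the obstruction as the $1$-form $g(T^{\nabla^{+}}(X,Y))$, so that non-degeneracy of $g$ together with Proposition 4.2.1 ($\nabla^{+}g=0$ and $g(T^{\nabla^{+}}(X,Y),Z)=-db(X,Y,Z)$) gives both directions at once. The engine is the same in both arguments — compatibility of $\nabla^{+}$ with $g$ (where $\nabla^{+}$ is the connection induced by $G$, not by $G_{g}$, a point you correctly flag) plus the identification of the torsion with $-db$ — but your version bypasses the generalized connection $\tilde{\nabla}$ and its torsion operator entirely and makes the obstruction visible on the eigenbundles, which is arguably more elementary and more transparent; the paper's version has the advantage of slotting directly into its general torsion expansion of the Nijenhuis tensor (Remark 5.2.1), which it reuses for the neighbouring integrability statements in the same section. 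Your observation that the single condition $g(T^{\nabla^{+}}(X,Y))=0$ controls the involutivity of both $V_{g}^{+}$ and $V_{g}^{-}$ simultaneously is exactly what makes the appeal to Lemma 5.2.1 legitimate, so no gap remains.
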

\begin{proof}
$"\leftarrow "$ We already now that $\nabla ^{+}g=0$, so:
$$ N_{G}^{[\cdot,\cdot]_{\nabla ^{+}}}(v,w)=-T^{\tilde{\nabla}}_{[\cdot,\cdot]_{\nabla ^{+}}}(Gv,Gw)+G(T^{\tilde{\nabla}}_{[\cdot,\cdot]_{\nabla ^{+}}}(Gv,w))+$$
$$+G(T^{\tilde{\nabla}}_{[\cdot,\cdot]_{\nabla ^{+}}}(v,Gw))-T^{\tilde{\nabla}}_{[\cdot,\cdot]_{\nabla ^{+}}}(v,w)=0 \hspace{0.3cm} \forall v,w \in C^{\infty}(E).$$
However, since $db=0$, it follows that $N_{G_{g}}^{[\cdot,\cdot]_{\nabla^{+}}}(v,w)=0 \hspace{0.2cm} \forall v,w \in C^{\infty}(E).$
\\ $"\rightarrow "$. By computing the Nijenhuis tensor for $G$ we obtain that:
$$0=N_{G_{g}}^{[\cdot,\cdot]_{\nabla ^{+}}}(X+\xi ,Y+\eta )=T^{\nabla ^{+}}(g^{-1}\xi ,Y)+T^{\nabla ^{+}}(X,g^{-1}\eta ) \hspace{0.3cm} \forall v,w \in C^{\infty}(E)$$
and so $T^{\nabla^{+}}=-db=0$.
\end{proof}
\begin{proposition}
Let $G$ be a generalized metric on $E$ and let $(g,b,\nabla ^{+})$ be the induced structures. If $G_{g}$ is $[\cdot,\cdot]_{\nabla ^{+}}$-integrable and $\nabla ^{+}b=0$, then $G$ is $[\cdot,\cdot]_{\nabla ^{+}}$-integrable.
\end{proposition}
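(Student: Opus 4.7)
The plan is to reduce the claim to a short algebraic manipulation using the structural lemmas proved earlier in Chapters 4 and 5. First, since $G_g$ is $[\cdot,\cdot]_{\nabla^+}$-integrable, Lemma 5.2.3 gives $db=0$, and Proposition 4.2.1 then yields $T^{\nabla^+}=-db=0$. In particular, by Lemma 4.3.1 the induced generalized connection $\tilde{\nabla}$ has vanishing torsion with respect to both the Courant bracket and $[\cdot,\cdot]_{\nabla^+}$; the relevant fact here is $T^{\tilde{\nabla}}_{[\cdot,\cdot]_{\nabla^+}}=0$.

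Second, the hypothesis $\nabla^+ b=0$, combined with $\nabla^+ g=0$ (which always holds by Proposition 4.2.1), allows us to invoke Lemma 4.2.2 to conclude that $\tilde{\nabla} G=0$, i.e. $\tilde{\nabla}_u(Gv)=G(\tilde{\nabla}_u v)$ for all $u,v\in C^{\infty}(E)$.

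The final step is to substitute these two facts into the expression for $N_G^{[\cdot,\cdot]_{\nabla^+}}$ given in Remark 5.2.1. Dropping every $T^{\tilde{\nabla}}_{[\cdot,\cdot]_{\nabla^+}}$-term, one is left with eight summands of the form $\pm \tilde{\nabla}_{(\cdot)}(G\cdot)$ or $\pm G\tilde{\nabla}_{(\cdot)}(\cdot)$; commuting $G$ past each $\tilde{\nabla}$ via $\tilde{\nabla}G=0$ and using $G^{2}=\mathrm{Id}$, the terms cancel in pairs and yield $N_G^{[\cdot,\cdot]_{\nabla^+}}(v,w)=0$. Hence $G$ is $[\cdot,\cdot]_{\nabla^+}$-integrable.

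The main obstacle is organizational rather than genuinely difficult: keeping the bookkeeping straight in the eight-term expansion of Remark 5.2.1. An arguably cleaner alternative is to bypass that formula and work directly with Lemma 5.2.2, showing involutivity of $V^{\pm}$ on generators $X^{\pm}=X\pm gX+bX$; using $\nabla^+(g\pm b)=0$ and the torsion identity $\nabla^+_X Y-\nabla^+_Y X=[X,Y]+T^{\nabla^+}(X,Y)=[X,Y]$, one computes $[X^{\pm},Y^{\pm}]_{\nabla^+}=[X,Y]^{\pm}$ in two lines. Either route produces the same conclusion, and I would likely present the second in the final write-up since it is noticeably shorter and highlights the geometric content of the two hypotheses.
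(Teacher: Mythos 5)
Your proof is correct, but it takes a genuinely different route from the paper's. The paper argues by conjugation: writing $G=e^{-b}G_{g}e^{b}$, it asserts that when $\nabla^{+}b=0$ the maps $e^{\pm b}$ preserve the bracket $[\cdot,\cdot]_{\nabla^{+}}$, so the automorphism $e^{b}$ transports the $[\cdot,\cdot]_{\nabla^{+}}$-integrability of $G_{g}$ to $G$ in one line. You instead make everything explicit: you extract $db=0$ from the hypothesis via Lemma 5.2.3, hence $T^{\nabla^{+}}=-db=0$ and $T^{\tilde{\nabla}}_{[\cdot,\cdot]_{\nabla^{+}}}=0$, you get $\tilde{\nabla}G=0$ from $\nabla^{+}b=0$ by Lemma 4.2.2, and you then cancel $N^{[\cdot,\cdot]_{\nabla^{+}}}_{G}$ term by term (or, in your alternative, check involutivity of $V^{\pm}$ on the generators $X^{\pm}=X\pm gX+bX$, which is indeed the cleanest write-up and feeds directly into Lemma 5.2.1). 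What your version buys is transparency about where each hypothesis enters: the paper's claim that $e^{\pm b}$ are ``$b$-fields for $[\cdot,\cdot]_{\nabla^{+}}$'' requires the identity $\nabla^{+}_{X}(\iota_{Y}b)-\nabla^{+}_{Y}(\iota_{X}b)=\iota_{[X,Y]}b$, and with $\nabla^{+}b=0$ alone one only gets $\iota_{\nabla^{+}_{X}Y-\nabla^{+}_{Y}X}b=\iota_{[X,Y]}b+\iota_{T^{\nabla^{+}}(X,Y)}b$, so the extra torsion term vanishes precisely because the first hypothesis forces $T^{\nabla^{+}}=0$ --- a point your computation surfaces explicitly while the paper leaves it implicit. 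What the paper's route buys is brevity and the structural observation that the whole situation is a $b$-transform of the $b=0$ case. Both arguments are sound within the paper's framework.
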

\begin{proof}
We observe that $G=e^{-b}G_{g}e^{b}$. Since $\nabla ^{+}b=0$, we have that $e^{-b}$ and $e^{b}$ are $b$-fields for $[\cdot,\cdot]_{\nabla ^{+}}$. So we obtain that $G$ is $[\cdot,\cdot]_{\nabla ^{+}}$-integrable.
\end{proof}
\begin{oss}
If $b$ is $d$-closed, it defines a cohomology class $[b]$. Hence, $\nabla^{+}$, when $db=0$, is the Levi-Civita connection and does not depend on the chosen representative within the cohomology class $[b]$. Indeed, if we consider $a=b+d\alpha$ we obtain $T^{\nabla ^{+}}=-da=-db-d^{2}\alpha =0.$
\end{oss}
\begin{proposition} 
Let $V^{+}=\{X+gX+bX|X\in C^{\infty}(TM)\}$ and \\ $V^{+}_{d\alpha}=\{X+gX+bX+d\alpha X |X \in C^{\infty}(TM)\}$ be two generalized metrics on $E$ that differ by an exact term. Then the induced connections $\nabla ^{+}$ and $\nabla ^{+,d\alpha}$ coincide.
\end{proposition}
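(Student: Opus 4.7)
The strategy is to realize $V^{+}_{d\alpha}$ as the image of $V^{+}$ under the $b$-field transform $e^{d\alpha}$ and then exploit that, since $d(d\alpha)=0$, this transform preserves the Courant bracket (Definition 2.1.4). Because the induced connection $\nabla^{+}$ is built entirely out of the Courant bracket and the orthogonal projection onto $V^{+}$ (Definition 4.2.1 and Remark 4.2.2), invariance under $e^{d\alpha}$ will force the two connections to coincide.

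First I would check the identification $V^{+}_{d\alpha}=e^{d\alpha}(V^{+})$, and similarly $V^{-}_{d\alpha}=e^{d\alpha}(V^{-})$: applying $e^{d\alpha}$ to $X+gX+bX\in V^{+}$ gives $X+gX+bX+\iota_{X}(d\alpha)=X+gX+(b+d\alpha)X$, which is a generic element of $V^{+}_{d\alpha}$. Since $e^{d\alpha}$ is $\langle\cdot,\cdot\rangle$-orthogonal, the orthogonal decomposition $E=V^{+}\oplus V^{-}$ is sent to the orthogonal decomposition $E=V^{+}_{d\alpha}\oplus V^{-}_{d\alpha}$. Hence the projections satisfy $(\cdot)_{\pm,d\alpha}=e^{d\alpha}\circ(\cdot)_{\pm}\circ e^{-d\alpha}$, and the extensions satisfy $X^{\pm,d\alpha}=e^{d\alpha}(X^{\pm})$.

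Next, using that $e^{d\alpha}$ is a $b$-field transform (so it preserves $[\cdot,\cdot]_{C}$), I would compute
$$[X^{-,d\alpha},Y^{+,d\alpha}]_{C}=[e^{d\alpha}X^{-},e^{d\alpha}Y^{+}]_{C}=e^{d\alpha}[X^{-},Y^{+}]_{C},$$
and apply the projection onto $V^{+}_{d\alpha}$ to get
$$\bigl([X^{-,d\alpha},Y^{+,d\alpha}]_{C}\bigr)_{+,d\alpha}=e^{d\alpha}\bigl(([X^{-},Y^{+}]_{C})_{+}\bigr).$$
Finally, since $e^{d\alpha}$ acts trivially on the tangent component, $\pi\circ e^{d\alpha}=\pi$, and therefore
$$\nabla^{+,d\alpha}_{X}Y=\pi\bigl(([X^{-,d\alpha},Y^{+,d\alpha}]_{C})_{+,d\alpha}\bigr)=\pi\bigl(([X^{-},Y^{+}]_{C})_{+}\bigr)=\nabla^{+}_{X}Y.$$

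The only nontrivial step is the bookkeeping that the orthogonal projections onto $V^{\pm}_{d\alpha}$ are exactly the conjugates of those onto $V^{\pm}$ by $e^{d\alpha}$; this is where one uses orthogonality of $e^{d\alpha}$. Everything else is a clean transport of structures by a bracket-preserving isometry, so I do not expect a serious obstacle.
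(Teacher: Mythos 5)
Your proof is correct, and it takes a genuinely different route from the one in the paper. The paper proves the statement by brute force: it writes both connections via $\nabla^{+}_{X}Y=\pi\bigl(([X^{-},Y^{+}]_{C})_{+}\bigr)$, expands the Courant brackets explicitly, uses the projection $(\cdot)_{+}=\tfrac{1}{2}(Id+G)$, and shows that the difference $\nabla^{+}_{X}Y-\nabla^{+,d\alpha}_{X}Y=\tfrac{1}{2}g^{-1}\bigl(d\alpha([X,Y])-\mathcal{L}_{X}(\iota_{Y}d\alpha)+\mathcal{L}_{Y}(\iota_{X}d\alpha)+d(\iota_{X}\iota_{Y}d\alpha)\bigr)$ vanishes by Cartan-calculus identities (after first disposing of the case $db=0$ via the preceding remark). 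You instead transport the whole structure by the automorphism $e^{d\alpha}$: since $d(d\alpha)=0$ it is a $b$-field transform preserving $[\cdot,\cdot]_{C}$, it is $\langle\cdot,\cdot\rangle$-orthogonal, it sends $V^{\pm}$ to $V^{\pm}_{d\alpha}$ and hence conjugates the projections, it matches the extensions $X^{\pm,d\alpha}=e^{d\alpha}(X^{\pm})$, and it acts trivially on the tangent component, so the two connections coincide with no computation at all. Your bookkeeping is sound (the only point to be careful about, the identity $(\cdot)_{\pm,d\alpha}=e^{d\alpha}\circ(\cdot)_{\pm}\circ e^{-d\alpha}$, follows because $e^{d\alpha}$ carries the splitting $E=V^{+}\oplus V^{-}$ to $E=V^{+}_{d\alpha}\oplus V^{-}_{d\alpha}$), and you treat all cases uniformly without the paper's case split on $db$. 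What each approach buys: yours is shorter, conceptual, and in the spirit the paper itself uses elsewhere (e.g.\ Proposition 5.1.3), and it adapts immediately to any bracket preserved by the transform (e.g.\ the $H$-twisted bracket); the paper's computation is more elementary and, as a by-product, exhibits the explicit formula for the difference of the two connections when the perturbing $2$-form is not closed, which the structural argument does not give.
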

\begin{proof}
If $db=0$, previous remark yields the statement.
Otherwise, if $db$ is nonzero we observe that
$$\nabla _{X}^{+}Y=\pi _{1}(\nabla _{X}^{+}Y^{+})=\pi _{1}(\nabla _{X}^{+}(Y+gY+bY))$$
where $\pi _{1} :V^{+}\rightarrow TM$. The same for the other connection:
$$\nabla _{X}^{+,d\alpha }Y=\pi _{2}(\nabla _{X}^{+,d\alpha }Y^{+})=\pi _{2}(\nabla _{X}^{+,d\alpha }(Y+gY+bY+d\alpha Y))$$
where $\pi _{2}:V_{d\alpha }^{+}\rightarrow TM$.
Thus, performing the computations, we obtain:
$$\nabla _{X}^{+}(Y+gY+bY)=([X^{-},Y^{+}]_{C})_{+}=([X-gX+bX,Y+gY+bY]_{C})_{+}=$$
$$=([X,Y]+\mathcal{L}_{X}(gY+bY)-\mathcal{L}_{Y}(-gX+bX)-\frac{1}{2}d(\iota_{X}(gY+bY)-\iota_{Y}(-gX+bX))_{+}.$$
Knowing that the projection is $(X+\xi )_{+}=\frac{1}{2}(Id+G)(X+\xi )$ it follows that:
$$ \nabla _{X}^{+}Y=\frac{1}{2}([X,Y]-g^{-1}b([X,Y])+g^{-1}(\mathcal{L}_{X}(gY+bY)-\mathcal{L}_{Y}(-gX+bX)+$$
$$-\frac{1}{2}d(\iota _{X}(gY+bY)-\iota _{Y}(-gX+bX)))).$$
Proceeding analogously for the second connection, we obtain:
$$\nabla _{X}^{+,d\alpha }Y=\frac{1}{2}([X,Y]-g^{-1}b([X,Y])-g^{-1}d\alpha ([X,Y])+g^{-1}(\mathcal{L}_{X}(gY+bY)+\mathcal{L}_{X}(d\alpha Y)+$$
$$-\mathcal{L}_{Y}(-gX+bX)-\mathcal{L}_{Y}(d\alpha X)-\frac{1}{2}d(\iota _{X}(gY+bY)-\iota _{Y}(-gX+bX))+$$
$$-\frac{1}{2}d(\iota _{X}(d\alpha Y)-\iota _{Y}(d\alpha X)))).$$
So the following relation holds:
$$\nabla _{X}^{+}Y-\nabla _{X}^{+,d\alpha }Y=\frac{1}{2}g^{-1}(d\alpha ([X,Y])-\mathcal{L}_{X}(\iota _{Y}d\alpha )+\mathcal{L}_{Y}(\iota _{X}d\alpha )+d(\iota _{X}\iota _{Y}d\alpha )).$$
We note, however, that the following properties of the Lie derivative hold
$$-\mathcal{L}_{X}(\iota _{Y}d\alpha )=-\iota _{Y}\mathcal{L}_{X}d\alpha -\iota _{[X,Y]}d\alpha ,$$
$$-\iota _{Y}\mathcal{L}_{X}d\alpha =-\iota _{Y} (d\iota _{X}d\alpha ),$$
$$\mathcal{L}_{Y}(\iota _{X}d\alpha )=d(\iota _{Y}\iota _{X}d\alpha )+\iota _{Y}d(\iota _{X}d\alpha ).$$
thus, by substituting, we obtain:
$$\nabla _{X}^{+}Y-\nabla _{X}^{+,d\alpha }Y=0.$$
\end{proof}
\hspace{-0.58cm} We recall the following result, for completeness we insert a proof.
\begin{proposition}
Two affine connections $\nabla ^{1}$ and $\nabla ^{2}$ on a manifold $M$, compatible with the same Riemannian metric $g$ and having the same torsion $T^{\nabla^{1}}=T^{\nabla ^{2}}$, are equal.
\end{proposition}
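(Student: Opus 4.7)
The plan is to derive a Koszul-type formula expressing $g(\nabla_X Y, Z)$ purely in terms of $g$, Lie brackets, and the torsion $T$. Since both connections share $g$ and $T$, the formula gives the same right-hand side for both, and non-degeneracy of $g$ then forces $\nabla^1 = \nabla^2$.

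Concretely, I would start from metric compatibility applied to each connection $\nabla^i$ (for $i=1,2$):
\begin{align*}
X\,g(Y,Z) &= g(\nabla^i_X Y, Z) + g(Y, \nabla^i_X Z),\\
Y\,g(Z,X) &= g(\nabla^i_Y Z, X) + g(Z, \nabla^i_Y X),\\
Z\,g(X,Y) &= g(\nabla^i_Z X, Y) + g(X, \nabla^i_Z Y).
\end{align*}
Adding the first two and subtracting the third, I obtain a combination in which most terms pair up via the identity $\nabla^i_A B - \nabla^i_B A = [A,B] + T^{\nabla^i}(A,B)$. Substituting this identity three times (once for each pair of mixed terms) lets me isolate $2\,g(\nabla^i_X Y, Z)$ on the left-hand side.

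The resulting Koszul-type formula reads
\[
2\,g(\nabla^i_X Y, Z) = X g(Y,Z) + Y g(Z,X) - Z g(X,Y) + g([X,Y],Z) - g([Y,Z],X) + g([X,Z],Y) + g(T^{\nabla^i}(X,Y),Z) - g(T^{\nabla^i}(Y,Z),X) + g(T^{\nabla^i}(X,Z),Y),
\]
where all ingredients on the right depend only on $g$, the Lie brackets, and $T^{\nabla^i}$. Under the hypothesis $T^{\nabla^1} = T^{\nabla^2}$, the right-hand side is identical for $i=1$ and $i=2$, hence $g(\nabla^1_X Y - \nabla^2_X Y, Z) = 0$ for every $Z \in C^\infty(TM)$. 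Non-degeneracy of $g$ yields $\nabla^1_X Y = \nabla^2_X Y$ for all $X, Y$, which is the desired conclusion.

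The only delicate step is the bookkeeping that leads to the Koszul identity: one must be careful with signs when substituting $\nabla^i_A B - \nabla^i_B A = [A,B] + T^{\nabla^i}(A,B)$ into each of the three mixed pairs arising from the cyclic sum. This is routine but the main place where an error could slip in; otherwise the argument is a direct generalization of the standard Levi-Civita uniqueness proof, where the torsion simply contributes an extra symmetric block that is identical for both connections by assumption.
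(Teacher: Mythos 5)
Your approach is exactly the paper's: derive a Koszul-type formula expressing $g(\nabla^i_X Y,Z)$ through $g$, Lie brackets and the torsion alone, then conclude by non-degeneracy of $g$. One correction, precisely at the bookkeeping step you flagged as delicate: in your displayed identity the terms $g([X,Z],Y)$ and $g(T^{\nabla^i}(X,Z),Y)$ must enter with a minus sign (equivalently, write $+g([Z,X],Y)$ and $+g(T^{\nabla^i}(Z,X),Y)$); carrying out the substitution $\nabla^i_A B-\nabla^i_B A=[A,B]+T^{\nabla^i}(A,B)$ in the combination $(1)+(2)-(3)$ yields $2g(\nabla^i_X Y,Z)=Xg(Y,Z)+Yg(Z,X)-Zg(X,Y)+g([X,Y],Z)-g([X,Z],Y)-g([Y,Z],X)+g(T^{\nabla^i}(X,Y),Z)-g(T^{\nabla^i}(X,Z),Y)-g(T^{\nabla^i}(Y,Z),X)$, which is the formula in the paper's proof. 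The slip does not damage the argument, since all you use is that the right-hand side depends only on $g$, the brackets and $T^{\nabla^i}$, so with the corrected signs your proof coincides with the paper's.
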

\begin{proof}
Since the connection $\nabla ^{1}$ is compatible with $g$, we observe that:
$$Xg(Y,Z)+Yg(X,Z)-Zg(X,Y)=g(\nabla _{X}^{1}Y,Z)+g(Y,\nabla _{X}^{1}Z)+g(\nabla _{Y}^{1}X,Z)+$$
$$+g(X,\nabla _{Y}^{1}Z)-g(\nabla _{Z}^{1}X,Y)-g(X,\nabla _{Z}^{1}Y)=$$
$$=2g(\nabla _{X}^{1}Y,Z)-g(Z,[X,Y])-g(Z,T^{\nabla ^{1}}(X,Y))+g(Y,T^{\nabla ^{1}}(X,Z))+g(Y,[X,Z])+$$
$$+g(X,T^{\nabla ^{1}}(Y,Z))+g(X,[Y,Z]).$$
So the following equality holds:
$$g(\nabla _{X}^{1}Y,Z)=\frac{1}{2}\{Xg(Y,Z)+Yg(X,Z)-Zg(X,Y)+g(Z,[X,Y])-g(Y,[X,Z])+$$
$$-g(X,[Y,Z])+g(Z,T^{\nabla ^{1}}(X,Y))-g(Y,T^{\nabla ^{1}}(X,Z))-g(X,T^{\nabla ^{1}}(Y,Z))\}.$$
Analogously, the same is done for $\nabla ^{2}$. From the assumption on the torsion, the statement follows.
\end{proof}
\begin{oss}
Previous theorem confirms the result established in Proposition 5.2.1. Moreover, this allows us to conclude that the two connections $\nabla ^{+}$ and $\nabla ^{-}$ induced by a generalized metric, differ only in the sign of the torsion terms in the equality from previous proof.
\end{oss}

\section{Generalized metrics with pseudo-Riemannian metric}
\begin{definition}
$\cite{2}$ A \emph{generalized almost product structure} $\mathfrak{P}$ is an endomorphism of $E$ such that $\mathfrak{P}^{2}=Id$ and $\mathfrak{P}$ is orthogonal with respect to $\langle \cdot ,\cdot \rangle$, or $$\langle \mathfrak{P}v,\mathfrak{P}w\rangle=\langle v,w\rangle \hspace{0.2cm} \forall v,w \in C^{\infty}(E).$$
\end{definition}
\begin{definition}
A \emph{generalized almost para-complex structure}, $\mathfrak{P}$, is a generalized almost product structure such that the two eigenbundles associated to $+1$ and $-1$ have the same rank.
\end{definition}
\begin{oss}
Let $G=e^{-b}G_{g}e^{b}$ be an endomorphism of $E$ induced by a 2-form $b$ and by a pseudo-Riemannian metric $g$. Then $G^{2}=Id$ and $G$ is orthogonal with respect to $\langle \cdot ,\cdot \rangle $ and so it is a generalized almost product structure. In other words, if we consider any generalized metric, it represents a special case of a generalized almost product structure. Moreover, when considering generalized metrics $G$ induced by  pseudo-Riemannian metrics $g$, we again obtain generalized almost product structures.
\end{oss}
\begin{oss}
The main difference between a generalized almost para-complex structure $\mathfrak{P}$ and a generalized metric $G$ lies in the fact that the eigenspace corresponding to $+1$, $V^{+}$, for $\mathfrak{P}$, is no longer necessarily positive definite. In general, however, we can study the existence conditions for a generalized almost para-complex structure and construct its induced connections, analogously to what is done for a generalized metric.
\\ A generalized almost product structure can be viewed as an endomorphism of the generalized tangent bundle:
$$\mathfrak{P}=\left[\begin{matrix}
H && \alpha \\ 
\beta && K \end{matrix}\right]$$
such that it satisfies the following condition:
\begin{gather}
\begin{cases}
\beta \alpha =I-K^2 \\ 
\alpha \beta =I-H^2 \\ 
H\alpha =-\alpha K \\ 
\beta H=-K\beta \\ 
\beta =-\beta ^{*} \\ 
\alpha =-\alpha ^{*}
\end{cases}.
\end{gather}
 Moreover we denote by $V^{\pm}$ the eigenbundles of $\mathfrak{P}$ and we suppose they have the same rank (or $\mathfrak{P}$ is a generalized almost para-complex structure). Observe that if $\alpha $ is invertible and $v=X+\xi \in V^{+}$ then it can be written as $v=Y+\alpha ^{-1}X-\alpha ^{-1}H(Y)$ where $Y=H(X)+\alpha (\xi )$. This provides a way of expressing the elements of $V^{+}$. From now on, we will assume that $\alpha$ is \emph{invertible}. 
\\ If we wish to define an element of $V^{+}$ as an extension of a vector field $X$, we give the following definition:
$$X^{+}=X+\alpha ^{-1}(X)-\alpha ^{-1}H(X).$$
Similarly, if we wish to extend to $V^{-}$:
$$X^{-}=X-\alpha ^{-1}(X)-\alpha ^{-1}H(X).$$
So we can now define the affine connection $\nabla ^{+}$ on $V^{+}$. Let $ w=Y+\eta \in V^{+}$, we define $\nabla ^{+}$ as:
$$\nabla _{v}^{+}w=\nabla _{X+\xi}^{+}(Y+\eta) =([X^{-},w]_{C})_{+},$$
where the subscript $+$ denotes the projection from $E$ onto $V^{+}$.
Via $\pi $, the connection on $V^{+}$ in turn defines an affine connection $\nabla ^{+}$ on $TM$: $$\nabla _{X}^{+}Y=\pi (\nabla _{X}^{+}Y^{+}) .$$
Thus, it is also possible to define the bracket on the generalized tangent bundle induced by $\nabla ^{+}$:
$$[X+\xi ,Y+\eta ]_{\nabla ^{+}}=[X,Y]+\nabla _{X}^{+}\eta -\nabla _{Y}^{+}\xi $$
and the generalized connection $\tilde{\nabla}$:
$$\tilde{\nabla}_{X+\xi}Y+\eta =\nabla_{X}^{+}Y+\nabla_{X}^{+}\eta .$$
\end{oss}
\begin{lemma}
Let $\mathfrak{P}$ be a generalized almost para-complex structure as in $(1)$. Let $\nabla ^{+}$ be the induced connection on $TM$ and $\tilde{\nabla}$ the associated generalized connection. If $\nabla ^{+}\alpha =\nabla ^{+}H=0$ then $\tilde{\nabla}\mathfrak{P}=0$.
\end{lemma}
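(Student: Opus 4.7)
The plan is to verify $(\tilde{\nabla}_u \mathfrak{P})(w) := \tilde{\nabla}_u(\mathfrak{P}w) - \mathfrak{P}(\tilde{\nabla}_u w) = 0$ component by component, using the block form of $\mathfrak{P}$ and the definition $\tilde{\nabla}_{X+\xi}(Y+\eta) = \nabla^+_X Y + \nabla^+_X \eta$. First I would write $u = X+\xi$, $w = Y+\eta$, expand $\mathfrak{P}w = (H Y + \alpha \eta) + (\beta Y + K \eta)$, and apply $\tilde{\nabla}_u$; comparing with $\mathfrak{P}(\tilde{\nabla}_u w) = H(\nabla^+_X Y) + \alpha(\nabla^+_X \eta) + \beta(\nabla^+_X Y) + K(\nabla^+_X \eta)$, the claim reduces to the four tensorial identities $\nabla^+ H = 0$, $\nabla^+ \alpha = 0$, $\nabla^+ \beta = 0$, $\nabla^+ K = 0$ (where $\nabla^+$ acts on each field as an endomorphism/bundle map in the natural way and satisfies the Leibniz rule).

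Two of these, namely $\nabla^+ H = 0$ and $\nabla^+ \alpha = 0$, are given by hypothesis. The remaining two must be deduced from the algebraic system $(1)$. Using $\alpha$ invertible, from $H\alpha = -\alpha K$ we get $K = -\alpha^{-1} H \alpha$, and from $\alpha\beta = I - H^2$ we get $\beta = \alpha^{-1}(I - H^2)$. Differentiating $\alpha \alpha^{-1} = I$ with $\nabla^+$ (which is a derivation and annihilates the identity) yields $\nabla^+ \alpha^{-1} = -\alpha^{-1}(\nabla^+ \alpha)\alpha^{-1} = 0$. Then the Leibniz rule gives
\[
\nabla^+ K = -(\nabla^+ \alpha^{-1})H\alpha - \alpha^{-1}(\nabla^+ H)\alpha - \alpha^{-1}H(\nabla^+ \alpha) = 0,
\]
and similarly
\[
\nabla^+ \beta = (\nabla^+ \alpha^{-1})(I - H^2) - \alpha^{-1}\bigl((\nabla^+ H)H + H(\nabla^+ H)\bigr) = 0.
\]

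With $\nabla^+$ annihilating each of the four block entries $H, \alpha, \beta, K$, substituting back shows that the four pieces of $\tilde{\nabla}_u(\mathfrak{P}w)$ match term-by-term with the corresponding pieces of $\mathfrak{P}(\tilde{\nabla}_u w)$, hence $\tilde{\nabla}\mathfrak{P} = 0$. The main point that required care is the passage from the hypotheses on $H$ and $\alpha$ alone to the parallelism of $K$ and $\beta$; this is not automatic and relies crucially on the algebraic constraints in $(1)$ together with the invertibility of $\alpha$, which allows one to express $K$ and $\beta$ as polynomial expressions in $H$, $\alpha$ and $\alpha^{-1}$.
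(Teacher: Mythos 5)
Your proposal is correct and follows essentially the same route as the paper: the paper's proof likewise deduces $\nabla^{+}K=0$ and $\nabla^{+}\beta=0$ from the relations in $(1)$ together with the invertibility of $\alpha$, and concludes $\tilde{\nabla}\mathfrak{P}=0$. Your write-up simply makes explicit the formulas $K=-\alpha^{-1}H\alpha$, $\beta=\alpha^{-1}(I-H^{2})$ and the Leibniz-rule computation that the paper leaves implicit.
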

\begin{proof}
From the equalities in $(1)$, it follows that if $\nabla ^{+}\alpha =\nabla ^{+}H=0$. Then, since $\alpha$ is invertible, $\nabla^{+}K$ and $\nabla ^{+}\beta $ vanish as well, and consequently $\tilde{\nabla}\mathfrak{P}$ does too.
\end{proof}
\begin{oss}
This confirms what has been established for generalized metrics. Indeed, in that case $\alpha ^{-1}=g$ and $H=g^{-1}b$.
\end{oss}
\begin{proposition}
Let $\mathfrak{P}$ be a generalized almost para-complex structure and $\nabla ^{+}$ the induced connection on $TM$. If $\nabla ^{+}\alpha =\nabla ^{+}H=0$ and $d(\alpha ^{-1}H)=0$, then $\mathfrak{P}$ is $[\cdot,\cdot]_{\nabla ^{+}}$-integrable.
\end{proposition}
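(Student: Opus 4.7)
The plan is to reduce the problem to a torsion-vanishing statement, mirroring the metric case of Lemma 5.2.3 and Proposition 5.2.2. First, I would invoke Lemma 6.4.1: the hypotheses $\nabla^{+}\alpha=\nabla^{+}H=0$ give $\tilde{\nabla}\mathfrak{P}=0$, so the generalized connection $\tilde{\nabla}$ commutes with $\mathfrak{P}$, i.e.\ $\tilde{\nabla}_{a}(\mathfrak{P}b)=\mathfrak{P}(\tilde{\nabla}_{a}b)$ for all $a,b\in C^{\infty}(E)$.

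Next, I would expand each of the four brackets appearing in $N_{\mathfrak{P}}^{[\cdot,\cdot]_{\nabla^{+}}}(v,w)$ using the identity
$$[a,b]_{\nabla^{+}} = \tilde{\nabla}_{a}b - \tilde{\nabla}_{b}a - T^{\tilde{\nabla}}_{[\cdot,\cdot]_{\nabla^{+}}}(a,b).$$
Substituting and using $\tilde{\nabla}\mathfrak{P}=0$ together with $\mathfrak{P}^{2}=\mathrm{Id}$, every $\tilde{\nabla}$-differentiation term cancels pairwise, leaving
$$N_{\mathfrak{P}}^{[\cdot,\cdot]_{\nabla^{+}}}(v,w) = -T^{\tilde{\nabla}}_{[\cdot,\cdot]_{\nabla^{+}}}(\mathfrak{P}v,\mathfrak{P}w)+\mathfrak{P}T^{\tilde{\nabla}}_{[\cdot,\cdot]_{\nabla^{+}}}(\mathfrak{P}v,w)+\mathfrak{P}T^{\tilde{\nabla}}_{[\cdot,\cdot]_{\nabla^{+}}}(v,\mathfrak{P}w)-T^{\tilde{\nabla}}_{[\cdot,\cdot]_{\nabla^{+}}}(v,w).$$
Since the computation inside the proof of Lemma 4.3.1 yields $T^{\tilde{\nabla}}_{[\cdot,\cdot]_{\nabla^{+}}}(X+\xi,Y+\eta)=T^{\nabla^{+}}(X,Y)$, it is enough to prove $T^{\nabla^{+}}=0$.

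Finally, I would establish the para-complex analogue of the identity $T^{\nabla^{+}}=-db$ from Proposition 4.2.1. Starting from $\nabla^{+}_{X}Y = \pi(([X^{-},Y^{+}]_{C})_{+})$ with the extensions $X^{\pm} = X \pm \alpha^{-1}X - \alpha^{-1}H(X)$, expanding the Dorfman part of the Courant bracket, applying the projection $\tfrac{1}{2}(\mathrm{Id}+\mathfrak{P})$ and antisymmetrizing in $X,Y$ should give $T^{\nabla^{+}}(X,Y,Z) = -d(\alpha^{-1}H)(X,Y,Z)$. The hypothesis $d(\alpha^{-1}H)=0$ then forces $T^{\nabla^{+}}=0$, every torsion term in the displayed formula for $N_{\mathfrak{P}}^{[\cdot,\cdot]_{\nabla^{+}}}$ vanishes, and the proposition follows. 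The main obstacle is precisely this torsion computation: in the metric case the symmetry of $g$ collapses several terms automatically, whereas here one must track the contributions $\mathcal{L}_{X}(\alpha^{-1}Y)$ and $\mathcal{L}_{X}(\alpha^{-1}H(Y))$ in the Dorfman expansion before applying the projection, since $\alpha^{-1}$ and $H$ are not subject to the same self-adjointness constraints as $g$ and $b$.
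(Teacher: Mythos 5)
Your proposal is correct and follows essentially the same route as the paper: reduce $N_{\mathfrak{P}}^{[\cdot,\cdot]_{\nabla^{+}}}$ to pure torsion terms using $\tilde{\nabla}\mathfrak{P}=0$, identify $T^{\tilde{\nabla}}_{[\cdot,\cdot]_{\nabla^{+}}}(X+\xi,Y+\eta)=T^{\nabla^{+}}(X,Y)$, and then compute the torsion of $\nabla^{+}$ from the extensions $X^{\pm}=X\pm\alpha^{-1}X-\alpha^{-1}H(X)$ and the projection $\frac{1}{2}(\mathrm{Id}+\mathfrak{P})$. The torsion computation you flag as the main obstacle is exactly what the paper carries out, obtaining $T^{\nabla^{+}}(X,Y)=\alpha(\iota_{X}\iota_{Y}d(\alpha^{-1}H))$, which is precisely your formula $-d(\alpha^{-1}H)(X,Y,Z)$ after lowering the index with $\alpha^{-1}$, so your conclusion goes through as planned.
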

\begin{proof}
From Remark 5.2.1, we can state that, in general, if $\nabla ^{+}$ is torsion-free, a generalized almost para-complex structure parallel with respect to the generalized connection $\tilde{\nabla}$ induced by $\nabla ^{+}$, is also $[\cdot,\cdot]_{\nabla ^{+}}$-integrable. Indeed, for the Nijenhuis tensor of a generalized almost para-complex structure, the following equality holds:
$$N_{\mathfrak{P}}^{[\cdot,\cdot]_{\nabla ^{+}}}(X+\xi ,Y+\eta )= -T^{\nabla ^{+}}(X,Y)+\mathfrak{P}(T^{\nabla ^{+}}(\pi_{TM}\mathfrak{P}X,Y))+$$
$$ +\mathfrak{P}(T^{\nabla ^{+}}(X,\pi_{TM}\mathfrak{P}Y))
 -T^{\nabla ^{+}}(\pi_{TM}\mathfrak{P}X,\pi_{TM}\mathfrak{P}Y).$$
We now compute the torsion of $\nabla ^{+}$, $T^{\nabla ^{+}}(X,Y)$. Observe that:
$$\nabla _{X}^{+}Y=\pi (\nabla _{X}^{+}Y^{+})=\pi (([X-\alpha ^{-1}(X)-\alpha ^{-1}H(X),Y+\alpha ^{-1}(Y)-\alpha ^{-1}H(Y)]_{C})_{+}).$$
Knowing that the projection onto $V^{+}$, is $\frac{1}{2}(Id+\mathfrak{P})$, we obtain:
$$\nabla _{X}^{+}Y=\frac{1}{2}([X,Y]+H([X,Y])+\alpha (\mathcal{L}_{X}(\alpha ^{-1}(Y)-\alpha ^{-1}H(Y))-\mathcal{L}_{Y}(-\alpha ^{-1}(X)+$$ 
$$-\alpha ^{-1}H(X))-\frac{1}{2}d(\iota_{X}(\alpha ^{-1}(Y)-\alpha ^{-1}H(Y))-\iota _{Y}(-\alpha ^{-1}(X)-\alpha ^{-1}H(X))))).$$
Similarly, one can determine:
$$\nabla _{Y}^{+}X=\frac{1}{2}([Y,X]+H([Y,X])+\alpha (\mathcal{L}_{Y}(\alpha ^{-1}(X)-\alpha ^{-1}H(X))-\mathcal{L}_{X}(-\alpha ^{-1}(Y)+$$ 
$$-\alpha ^{-1}H(Y))-\frac{1}{2}d(\iota_{Y}(\alpha ^{-1}(X)-\alpha ^{-1}H(X))-\iota _{X}(-\alpha ^{-1}(Y)-\alpha ^{-1}H(Y))))).$$
Hence, the torsion is:
$$T^{\nabla ^{+}}(X,Y)=\nabla _{X}^{+}Y-\nabla _{Y}^{+}X-[X,Y]=$$ $$=H([X,Y])+\alpha (\mathcal{L}_{X}(-\alpha ^{-1}H(Y))-\mathcal{L}_{Y}(-\alpha ^{-1}H(X))+d(\iota _{X}\iota _{Y}(\alpha ^{-1}H))) =$$
$$=\alpha (\iota_{X}\iota_{Y}d(\alpha ^{-1}H)),$$
where in the last equality we used the following properties of the Lie derivative:
$$\mathcal{L}_{X}\omega =d(\iota _{X}\omega )+\iota _{X}d\omega ,$$
$$\mathcal{L}_{X}(\iota _{Y}\omega )=\iota _{Y}(\mathcal{L}_{X}\omega )+\iota _{[X,Y]}\omega .$$
Thus, if $d(\alpha ^{-1}H)=0$, it follows that $T^{\nabla ^{+}}=0$ and consequently $T^{\tilde{\nabla}}=0$, from which we obtain the $[\cdot ,\cdot ]_{\nabla ^{+}}$-integrability of $\mathfrak{P}$.
\end{proof}
\begin{oss}
In this case as well, we find analogies with generalized metrics. Indeed, in the case of a generalized metric, we have $\alpha ^{-1}H=-b$.
\end{oss}
\begin{proposition}
Let $\mathfrak{P}$ be a generalized almost para-complex structure as in $(1)$. Let $\nabla ^{+}$ be the induced connection on $TM$ and $\tilde{\nabla}$ the associated generalized connection. If $\nabla ^{+}\alpha =\nabla ^{+}H=0$, $d(\alpha ^{-1}H)=0$ and $$\langle \tilde{\nabla}_{u}v,w\rangle =\langle \tilde{\nabla}_{\mathfrak{P}u}v,\mathfrak{P}w\rangle \hspace{0.2cm}\forall v,w \in C^{\infty}(E),$$
then $\mathfrak{P}$ is $[\cdot,\cdot]_{C}$-integrable.
\end{proposition}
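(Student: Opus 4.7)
The plan is to mimic the proof of Proposition 5.1.2, adapting it to the generalized almost para-complex setting. The three hypotheses play complementary roles: $\nabla^{+}\alpha=\nabla^{+}H=0$ forces $\tilde{\nabla}\mathfrak{P}=0$, the closedness $d(\alpha^{-1}H)=0$ kills the torsion, and the orthogonality-type identity $\langle \tilde{\nabla}_{u}v,w\rangle=\langle \tilde{\nabla}_{\mathfrak{P}u}v,\mathfrak{P}w\rangle$ handles the remaining adjoint contributions.

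First I would invoke Lemma 6.0.1 to pass from $\nabla^{+}\alpha=\nabla^{+}H=0$ to $\tilde{\nabla}\mathfrak{P}=0$. Then, reusing the torsion computation performed inside the proof of Proposition 6.0.1, the condition $d(\alpha^{-1}H)=0$ gives $T^{\nabla^{+}}(X,Y)=\alpha(\iota_{X}\iota_{Y}d(\alpha^{-1}H))=0$, and by Lemma 4.3.2 this lifts to the vanishing of the generalized torsion, $T^{\tilde{\nabla}}=0$.

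Next I would expand $N_{\mathfrak{P}}$ with respect to $[\cdot,\cdot]_{C}$ in terms of $\tilde{\nabla}$, using the same scheme as in Remark 5.1.1 (with $\mathfrak{P}^{2}=Id$ instead of $\mathfrak{J}^{2}=-Id$, which flips a sign but does not alter the structural layout). Once $\tilde{\nabla}\mathfrak{P}=0$ and $T^{\tilde{\nabla}}=0$ are substituted in, the only surviving contributions are those coming from the adjoint pieces $(\tilde{\nabla}\,\cdot\,)^{*}$, so that
$$N_{\mathfrak{P}}(v,w)=(\tilde{\nabla}\mathfrak{P}v)^{*}(\mathfrak{P}w)-\mathfrak{P}\bigl((\tilde{\nabla}\mathfrak{P}v)^{*}w\bigr)-\mathfrak{P}\bigl((\tilde{\nabla}v)^{*}\mathfrak{P}w\bigr)+(\tilde{\nabla}v)^{*}w.$$
Pairing with an arbitrary $u\in C^{\infty}(E)$ via $\langle\cdot,\cdot\rangle$, using the orthogonality of $\mathfrak{P}$ and the relation $\tilde{\nabla}_{u}\mathfrak{P}v=\mathfrak{P}\tilde{\nabla}_{u}v$, this reduces to a multiple of $\langle \tilde{\nabla}_{u}v,w\rangle-\langle \tilde{\nabla}_{\mathfrak{P}u}v,\mathfrak{P}w\rangle$, which vanishes by hypothesis.

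The main obstacle is carefully checking that the expansion from Remark 5.1.1 still holds with the correct signs in the $\mathfrak{P}^{2}=+Id$ case, so that all torsion terms and all $\tilde{\nabla}\mathfrak{P}$ terms really do drop out simultaneously and nothing spurious is left behind. Once this bookkeeping is confirmed, the contraction with $\langle\cdot,\cdot\rangle$ is a direct verification and Lemma 5.1.1 concludes that $\mathfrak{P}$ is $[\cdot,\cdot]_{C}$-integrable.
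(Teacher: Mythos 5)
Your proposal is correct and follows essentially the same route as the paper, which likewise expands $N_{\mathfrak{P}}$ via the scheme of Remark 5.1.1 and then applies the lemma giving $\tilde{\nabla}\mathfrak{P}=0$ together with the torsion computation $T^{\nabla^{+}}=\alpha(\iota_{X}\iota_{Y}d(\alpha^{-1}H))=0$, so that the hypothesis kills the remaining adjoint terms exactly as in your contraction $2(\langle\tilde{\nabla}_{u}v,w\rangle-\langle\tilde{\nabla}_{\mathfrak{P}u}v,\mathfrak{P}w\rangle)=0$. The only cosmetic remark is that no sign adjustment is actually needed, since Remark 5.1.1 already treats an endomorphism squaring to $+Id$.
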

\begin{proof}
The proof follows the same steps as in Remark 5.1.1 and then applies Lemma 6.2.1 and Proposition 6.1.2.
\end{proof}
\begin{proposition}
Let $G$ be a generalized metric induced by $(g,b)$ where $g$ is a pseudo-Riemannian metric and $b$ a 2-form. Then, the following conditions hold:
\begin{nicenum}[label=\roman*.]
\item If $\nabla ^{+}b=0$ and $db=0$ then $G$ is $[\cdot,\cdot]_{\nabla ^{+}}$-integrable.
\item If $\nabla ^{+}b=0$, $db=0$ and 
$$\langle \tilde{\nabla}_{u}v,w\rangle =\langle \tilde{\nabla}_{\mathfrak{P}u}v,\mathfrak{P}w\rangle \hspace{0.2cm}\forall v,w \in C^{\infty}(E)$$
then $G$ is $[\cdot,\cdot]_{C}$-integrable.
\end{nicenum}
\end{proposition}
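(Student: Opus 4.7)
The plan is to recognize that a generalized metric $G$ induced by a pseudo-Riemannian metric $g$ and a $2$-form $b$ is a special case of a generalized almost para-complex structure (as observed in Remark 6.1.1), and then simply invoke Propositions 6.1.1 and 6.1.2 after translating the hypotheses via the explicit block form of $G$.

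First I would identify the components of $\mathfrak{P}=G$ in the notation of Remark 6.1.2. From the formula
$$G=\begin{pmatrix} -g^{-1}b & g^{-1} \\ g-bg^{-1}b & bg^{-1} \end{pmatrix},$$
one reads off $H=-g^{-1}b$, $\alpha=g^{-1}$, $\beta=g-bg^{-1}b$, $K=bg^{-1}$. Since $g$ is pseudo-Riemannian it is non-degenerate, so $\alpha$ is invertible with $\alpha^{-1}=g$, and in particular $\alpha^{-1}H=-b$. This is exactly the setting in which Propositions 6.1.1 and 6.1.2 apply.

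Next I would translate the hypotheses. The induced connection $\nabla^{+}$ is always metric, $\nabla^{+}g=0$, and therefore $\nabla^{+}g^{-1}=0$, which gives $\nabla^{+}\alpha=0$ for free. Using $\nabla^{+}g=0$ once more, $\nabla^{+}H=-\nabla^{+}(g^{-1}b)=-g^{-1}\nabla^{+}b$, so the hypothesis $\nabla^{+}b=0$ is equivalent to $\nabla^{+}H=0$. Finally $d(\alpha^{-1}H)=-db$, so $db=0$ is equivalent to $d(\alpha^{-1}H)=0$. With these identifications, the hypotheses of part (i) become exactly the hypotheses of Proposition 6.1.1, and the hypotheses of part (ii) become exactly those of Proposition 6.1.2, so the conclusions follow.

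The only substantive point to check is that $G$ really fits the framework of Section 6 even when $g$ is indefinite: what was used in defining $\nabla^{+}$ via $X^{-}=X+\alpha^{-1}X-\alpha^{-1}H(X)$ was only the invertibility of $\alpha$, not the positivity of $g$. Consequently the conclusions of Propositions 6.1.1 and 6.1.2 carry over verbatim to the pseudo-Riemannian case, and I expect no genuine obstacle beyond the bookkeeping of these identifications.
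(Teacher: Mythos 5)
Your proposal is correct and follows essentially the same route as the paper: the paper's proof likewise views $G$ as a generalized almost para-complex structure, notes $\alpha^{-1}=g$, $\nabla^{+}g=0$ and identifies $\alpha^{-1}H$ with $\pm b$, and then invokes the two integrability propositions of Section 6 for $[\cdot,\cdot]_{\nabla^{+}}$ and $[\cdot,\cdot]_{C}$. Your sign $\alpha^{-1}H=-b$ is the correct one (the paper's proof writes $b$, but this is immaterial since only $d(\alpha^{-1}H)=0$ and $\nabla^{+}(\alpha^{-1}H)=0$ are used), and your explicit check that only invertibility of $\alpha$, not positivity of $g$, is needed is a useful clarification the paper leaves implicit.
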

\begin{proof}
We apply Propositions 6.1.2 and 6.2.2, observing that in this case $\alpha ^{-1}H=b$, $\alpha ^{-1}=g$ and $\nabla ^{+}g=0$ by construction of $\nabla ^{+}$ and since $g$ is non-degenerate.
\end{proof}
\begin{oss}
Let $g$ be a pseudo-Riemannian metric and let $sign(g)=(p,q)$ be its signature, where $0<p\leq q <m$, $p+q=dim_{\mathbb{R}}M=m$. Therefore, an isotropic subbundle $D\subset TM$ may exist.
\\ In particular, one can obtain an isotropic subbundle of dimension equal to $p$.
\end{oss}
%\begin{lemma}
%Let $g$ be a pseudo-Riemannian metric of $sign(g)=(p,q)$ with $0<p\leq q <m$, $p+q=dim_{\mathbb{R}}M=m$. 
%The following conditions are sufficient for the existence of an isotropic subbundle $D\subset TM$ of dimension $k\leq p$:
%\begin{nicenum}
%\item $TM =\epsilon ^{k}\oplus E'$ where $\epsilon ^{k}$ is a trivial subbundle of rank $k$.
%\item There exist $k$ vector fields $X_{1}, \dots ,X_{k} \in C^{\infty}(TM)$ such that \\ $g(X_{i},X_{j})=0 \hspace{0.2cm} \forall i,j \in \{1, \dots , k\}$.
%\item There exist $k$ linear indipendent sections of $TM$.
%\end{nicenum}
%\end{lemma}
\begin{lemma}
Let $g$ be a pseudo-Riemannian metric of $sign(g)=(p,q)$ with $0<p\leq q <m$, $p+q=dim_{\mathbb{R}}M=m$. If there exist $k$ smooth vector fields $X_{1},\dots ,X_{k} \in C^{\infty}(TM)$ which are pointwise linearly independent and satisfy $$g(X_{i},X_{j})=0 \hspace{0.25cm} \forall i,j \in \{1, \dots k\},$$
then $D:=span\{X_{1},\dots ,X_{k}\}$ is a rank-$k$ subbundle of $TM$ which is totally isotropic; in particular such a $k$ must satisfy $k\leq p$.
\end{lemma}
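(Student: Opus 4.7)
The statement has two essentially independent pieces, and I would treat them in sequence. The first is that $D$ is a smooth rank-$k$ subbundle of $TM$ on which $g$ vanishes identically; the second is the bound $k\leq p$. The first piece is a mild bookkeeping exercise, while the second is the classical Witt-index estimate applied fibrewise.

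For the subbundle assertion, I would invoke the standard local frame criterion: a tuple of smooth sections of $TM$ that is pointwise linearly independent automatically spans a smooth vector subbundle of rank $k$, with the $X_{i}$ serving as a global frame. Total isotropy is then a one-line bilinearity check: for any $v,w\in D_{x}$, write $v=\sum_{i}a_{i}X_{i}(x)$ and $w=\sum_{j}b_{j}X_{j}(x)$, and then $g_{x}(v,w)=\sum_{i,j}a_{i}b_{j}\,g(X_{i},X_{j})(x)=0$ by hypothesis.

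For the bound $k\leq p$, I would argue pointwise. Fix $x\in M$ and choose a $g_{x}$-orthogonal decomposition $T_{x}M=V_{x}^{+}\oplus V_{x}^{-}$ where $g_{x}$ is positive definite on $V_{x}^{+}$ with $\dim V_{x}^{+}=p$ and negative definite on $V_{x}^{-}$ with $\dim V_{x}^{-}=q$ (permissible because $p\leq q$, so the positive part is the \emph{smaller} one). The key observation is that the projection $\pi:T_{x}M\to V_{x}^{+}$ restricted to $D_{x}$ is injective: if $v\in\ker(\pi|_{D_{x}})$ then $v\in V_{x}^{-}$, so $g_{x}(v,v)\leq 0$ with equality only at $v=0$, but total isotropy of $D_{x}$ forces $g_{x}(v,v)=0$, hence $v=0$. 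Therefore $k=\dim D_{x}\leq\dim V_{x}^{+}=p$. There is essentially no obstacle here; the only point requiring any care is the sign convention for $sign(g)$, which must be arranged so that one projects onto the smaller of the two definite subspaces, producing exactly the bound $p$ rather than $q$.
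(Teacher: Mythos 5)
Your proposal is correct and follows essentially the same route as the paper: pointwise linear independence gives the smooth rank-$k$ subbundle, bilinearity gives total isotropy fibrewise, and the bound $k\leq p$ comes from the maximal dimension of an isotropic subspace in signature $(p,q)$. The only difference is that the paper simply cites this last fact (the Witt-index bound), whereas you prove it via the injectivity of the projection of $D_{x}$ onto the positive-definite summand, which makes your write-up slightly more self-contained.
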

\begin{proof}
Pointwise independence of the $X_{i}$ guarantees that their span defines a smooth rank $k$ subbundle $D$. The mutual nullity $g(X_{i},X_{j})=0$ implies every fiber $D_{x}$ is isotropic. Hence $D$ is an isotropic subbundle and since the maximal dimension of an isotropic subbundle in signature $(p,q)$ is $p$, we have $k\leq p$.
\end{proof}
\begin{lemma}
Let $g$ be a pseudo-Riemannian metric of $sign(g)=(p,q)$ with $0<p\leq q <m$, $p+q=dim_{\mathbb{R}}M=m$. Locally, there always exists an isotropic subbundle $D_{max}$ of maximal dimension $p$.
\end{lemma}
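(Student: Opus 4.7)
The plan is to construct a local totally isotropic subbundle of rank $p$ by pairing off the negative directions of a pseudo-orthonormal frame with positive ones and then invoking the preceding lemma.

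First I would work in a small neighbourhood $U\subset M$ on which $g$ admits a pseudo-orthonormal local frame. The existence of such a frame is standard: one picks any local trivialisation of $TM|_U$, diagonalises $g$ fibrewise (which can be done smoothly because the $\pm 1$-eigendistributions of $g$ with respect to any auxiliary Riemannian metric vary smoothly and preserve the signature), and applies a Gram--Schmidt-type procedure to obtain smooth sections $e_{1},\dots,e_{p},f_{1},\dots,f_{q}$ of $TM|_U$ satisfying
\[
g(e_i,e_j)=-\delta_{ij},\qquad g(f_\alpha,f_\beta)=\delta_{\alpha\beta},\qquad g(e_i,f_\alpha)=0.
\]

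Next, exploiting the assumption $p\leq q$, I would define for each $i\in\{1,\dots,p\}$ the smooth vector field
\[
X_i:=e_i+f_i\in C^{\infty}(TM|_U).
\]
A direct computation gives $g(X_i,X_j)=g(e_i,e_j)+g(f_i,f_j)=-\delta_{ij}+\delta_{ij}=0$ for all $i,j\in\{1,\dots,p\}$, so the $X_i$ are mutually null. Linear independence at each point follows immediately because any relation $\sum c^iX_i=0$ splits into $\sum c^ie_i=0$ and $\sum c^if_i=0$ by the direct-sum decomposition associated with the frame, forcing all $c^i=0$.

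Finally, having produced $p$ pointwise linearly independent smooth vector fields on $U$ that are pairwise $g$-orthogonal and self-null, I would apply the previous lemma with $k=p$ to conclude that
\[
D_{\max}:=\mathrm{span}\{X_1,\dots,X_p\}\subset TM|_U
\]
is a totally isotropic subbundle of rank exactly $p$, which is the maximal possible value in signature $(p,q)$. The main technical obstacle is the smooth existence of the local pseudo-orthonormal frame; once this is granted, the rest is a bookkeeping argument combined with the previous lemma.
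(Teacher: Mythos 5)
Your proof is correct, but it takes a genuinely different route from the paper. The paper fixes a maximal totally isotropic subspace $W\subset T_{x_0}M$ with basis $w_1,\dots,w_p$, passes to a normal geodesic neighbourhood $U$ of $x_0$, and parallel transports each $w_i$ along the radial geodesics; since parallel transport is a linear isometry of $g$, the resulting fields stay pointwise independent and mutually null, so their span is the desired rank-$p$ isotropic subbundle. You instead build a local pseudo-orthonormal frame $e_1,\dots,e_p,f_1,\dots,f_q$ and set $X_i=e_i+f_i$ (using $p\leq q$), then invoke the preceding lemma with $k=p$. Your approach is more elementary and in a sense more robust: it needs no exponential map or Levi-Civita connection, only the standard existence of local orthonormal frames for a metric of constant signature, and it exhibits the null directions explicitly while making direct use of the previous lemma. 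The paper's parallel-transport argument buys brevity (smoothness, independence and isotropy all follow at once from the isometry property of parallel transport) at the cost of importing geodesic normal neighbourhoods. One small point in your write-up: the smooth diagonalisation step should be phrased in terms of the positive and negative spectral subbundles of the $h$-self-adjoint operator $A$ with $g=h(A\cdot,\cdot)$ for an auxiliary Riemannian metric $h$ (these are smooth because the signature, hence the splitting of the spectrum away from $0$, is locally constant), not in terms of individual eigendistributions, whose eigenvalues may cross; after that, Gram--Schmidt with respect to $\mp g$ on each summand gives the frame and the rest of your argument goes through verbatim.
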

\begin{proof}
At $x_{0}$, choose a maximal totally isotropic subbundle $W \subset T_{x_{0}}M$ of dimension $p$ and let $w_{1}, \dots ,w_{p}$ be a basis of $W$. Take a normal geodesic neighbourhood $U$ of $x_{0}$. Parallel transport each $w_{i}$ along the unique radial geodesic from $x_{0}$ to $x \in U$; the resulting fields $W_{i}$ are smooth, remain pointwise linearly independent and, since parallel transport preserves $g$, satisfy $g(W_{i},W_{j})=0$ $\forall i,j$ on $U$. Hence the span of $W_{i}$ defines a smooth rank-$p$ subbundle $D_{max} \subset TM|_{U}$ which is isotropic.
\end{proof}
\begin{proposition}
Let $g$ be a pseudo-Riemannian metric of $sign(g)=(p,q)$ with $0<p\leq q <m$, $p+q=dim_{\mathbb{R}}M=m$. If there exists a global isotropic subbundle of dimension $p$, then a maximal isotropic subbundle in $E$ can be constructed starting from $D_{max}$. Locally, this construction always exists.
\end{proposition}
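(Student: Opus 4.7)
The plan is to use the standard annihilator construction of maximal isotropic subbundles of $E$. The pairing $\langle\cdot,\cdot\rangle$ on $E$ has neutral signature $(m,m)$ on each fibre, so any maximal isotropic subbundle necessarily has rank $m$. Starting from the $g$-isotropic subbundle $D_{max}\subset TM$ of rank $p$, I introduce
$$\mathrm{Ann}(D_{max}) := \{\xi \in T^{*}M \mid \iota_{X}\xi = 0 \;\; \forall X \in D_{max}\},$$
which is a smooth subbundle of $T^{*}M$ of rank $m-p$, and set $L := D_{max}\oplus \mathrm{Ann}(D_{max}) \subset E$.

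The verification is immediate. The rank of $L$ equals $p+(m-p)=m$, the maximum allowed. For isotropy, given $X+\xi,\,Y+\eta \in L$ with $X,Y\in D_{max}$ and $\xi,\eta\in \mathrm{Ann}(D_{max})$, one computes
$$\langle X+\xi, Y+\eta\rangle = \iota_{X}\eta + \iota_{Y}\xi = 0,$$
since each term vanishes by the annihilator condition. Hence $L$ is maximal isotropic in $E$. Note that the pseudo-Riemannian data $(g,b)$ does not literally enter this particular construction: the role of $g$ is only to supply the distribution $D_{max}$ in the first place. If one wishes to incorporate the $2$-form $b$ of a generalized metric, one may further apply the $b$-field transform $e^{b}$, which is $\langle\cdot,\cdot\rangle$-orthogonal and therefore maps $L$ to another maximal isotropic subbundle; this yields the alternative description $e^{b}(L)=\{X+\iota_{X}b+\xi \mid X \in D_{max},\, \xi \in \mathrm{Ann}(D_{max})\}$.

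For the local assertion, I invoke the preceding parallel-transport lemma, which supplies a smooth $g$-isotropic $D_{max}$ of rank $p$ on any normal geodesic neighbourhood; running the same construction there produces the desired local maximal isotropic subbundle of $E$. The main difficulty is therefore not in the algebraic construction itself, which is elementary once the annihilator is available, but in the purely topological issue of existence: a smooth, rank-$p$, $g$-isotropic distribution on all of $M$ is not guaranteed by the signature of $g$ alone, which is precisely why the global half of the proposition posits it as a hypothesis.
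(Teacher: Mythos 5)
Your construction is correct as a proof of the literal statement, but it is genuinely different from the paper's, and the difference matters for what comes next. You take $L=D_{max}\oplus \mathrm{Ann}(D_{max})$ with $D_{max}$ sitting inside $TM\subset E$; since $TM$ and $T^{*}M$ are both isotropic for $\langle\cdot,\cdot\rangle$, the only pairing to check is the one between $D_{max}$ and its annihilator, and your argument never actually uses the $g$-isotropy of $D_{max}$ (any rank-$p$ distribution would do), nor the generalized metric at all. The paper instead lifts $D_{max}$ into the eigenbundle $V^{+}$ via $X\mapsto X^{+}=X+gX+bX$, sets $v(D_{max})=\{X^{+}\mid X\in D_{max}\}$, and takes $V_{i}=v(D_{max})\oplus \mathrm{Ann}(D_{max})$; here the hypothesis $g(X,Y)=0$ on $D_{max}$ is exactly what makes $\langle X^{+},Y^{+}\rangle=g(X,Y)=0$, so the isotropy assumption is essential rather than decorative. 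The payoff of the paper's version is that half of $V_{i}$ lies in $V^{+}$, which is what the subsequent proposition exploits: $[\cdot,\cdot]_{C}$-integrability of $G$ gives involutivity of $V^{+}$, hence of $v(D_{max})$, and together with involutivity of $D_{max}$ one gets involutivity of $V_{i}$ and integrability of the induced structure $\mathfrak{J}_{G}$. Your $L$, and even your $b$-field correction $e^{b}(L)=\{X+\iota_{X}b+\xi\}$, is not that subbundle (it lacks the $gX$ component), so while it proves "a maximal isotropic subbundle can be constructed from $D_{max}$", it is not the one "associated with $G$" that the paper needs; if you want your route to feed into the following result you should replace $D_{max}$ by its lift $v(D_{max})\subset V^{+}$. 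Your closing remarks on the local statement (via the parallel-transport lemma) and on the global existence of $D_{max}$ being a hypothesis, not a consequence of the signature, are accurate and consistent with the paper.
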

\begin{proof}
Given an isotropic subbundle $D_{max}$, it is possible to define $v(D_{max})$, a subbundle of $V^{+}$, as the extension of the elements of $D_{max}$ to $V^{+}$. That is, if $X \in C^{\infty}(D_{max})$, we define $X^{+}=X+gX+bX \in v(D_{max})$. Moreover, we observe that if $X \in C^{\infty}(D_{max})$, then $g(X,X)=0$ and hence:
$$\langle X^{+},X^{+}\rangle =g(X,X)=0$$
and if $X,Y \in C^{\infty}(D_{max})$ we have:
$$\langle X^{+},Y^{+}\rangle =g(X,Y)=0.$$
Therefore, the subbundle $v(D_{max}) \subset V^{+}$ is $\langle \cdot ,\cdot \rangle $-isotropic.
\\ We observe, however, that neither $D_{max}$ or $v(D_{max})$ are a priori maximal isotropic subbundles. If $g$ were a neutral metric, then one could conclude that $v(D_{max})$ is a maximal isotropic subbundle of $V^{+}$.
\\ Let us now consider $Ann(D_{max})=\{\alpha \in C^{\infty}(T^{*}M): \alpha |_{D_{max}}=0\} \subset Ann(v(D_{max}))$ and observe that if $\alpha ,\beta \in Ann(D_{max})$ then $\langle \alpha ,\beta \rangle =0$.
Hence, the annihilator $Ann(D_{max})$ is $\langle \cdot ,\cdot \rangle $-isotropic.
\\ We thus define $V_{i}=v(D_{max})\oplus Ann(D_{max})$ and consider $X^{+}\in V(D_{max})$, $\alpha \in Ann(D_{max})$:
$$\langle X^{+},\alpha \rangle =\frac{1}{2}(\alpha (X))=0.$$
Hence, $V_{i}$ is an isotropic subbundle of $E$ of dimension $dimV_{i}=p+(m-p)=m=dim_{\mathbb{R}}M$, and therefore maximal.
\end{proof}
\begin{proposition}
Let $G$ be a generalized metric induced by $(g,b)$ where $g$ is a pseudo-Riemannian metric of $sign(g)=(p,q)$ with $0<p\leq q <m$, $p+q=dim_{\mathbb{R}}M=m$. 
\\ Suppose there exists a global isotropic subbundle $D_{max}\subset TM$ of dimension $p$. If $G$ is $[\cdot,\cdot]_{C}$-integrable and $D_{max}$ is $[\cdot,\cdot]$-involutive, then the strong generalized almost complex structure $\mathfrak{J}_{G}$ induced by $G$, that is, the one associated with $V_{i}=v(D_{max})\oplus Ann(D_{max})$, is $[\cdot,\cdot]_{C}$-integrable.
\end{proposition}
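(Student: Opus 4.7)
The plan is to show that the maximal isotropic subbundle $V_{i} = v(D_{max}) \oplus Ann(D_{max})$ is involutive under $[\cdot,\cdot]_{C}$, which by Proposition 2.9.1 is equivalent to $[\cdot,\cdot]_{C}$-integrability of the structure associated with $V_i$. Because $V_i$ is $\langle\cdot,\cdot\rangle$-isotropic by Proposition 6.3.1, the Leibniz defect $\langle e_1,e_2\rangle\, df$ for the Courant bracket vanishes on sections of $V_i$, so the bracket restricts to a $C^\infty$-bilinear operation there, and it suffices to verify closure on generating sections of the form $e_j = X_j^+ + \alpha_j$ with $X_j \in C^{\infty}(D_{max})$ and $\alpha_j \in C^{\infty}(Ann(D_{max}))$.

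I would then expand $[e_1,e_2]_C$ into four contributions and handle each separately. The term $[\alpha_1,\alpha_2]_C$ vanishes at once since both $TM$-parts are zero. For $[X_1^+, X_2^+]_C$, the hypothesis that $G$ is $[\cdot,\cdot]_C$-integrable places this element in $V^+$; since $V^+$ is the graph of $g+b$, any of its elements is determined by its projection to $TM$, whence $[X_1^+, X_2^+]_C = [X_1,X_2]^+$. Involutivity of $D_{max}$ then gives $[X_1,X_2]\in D_{max}$, so this term lies in $v(D_{max})$.

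For the mixed term $[X_1^+, \alpha_2]_C$ I would use $\iota_{X_1}\alpha_2 = \alpha_2(X_1) = 0$ (since $X_1\in D_{max}$ and $\alpha_2\in Ann(D_{max})$) to collapse the Courant formula to $\tfrac{1}{2}\mathcal{L}_{X_1}\alpha_2$. Evaluating this 1-form on any $Y\in D_{max}$ yields
$$(\mathcal{L}_{X_1}\alpha_2)(Y) = X_1(\alpha_2(Y)) - \alpha_2([X_1,Y]) = 0,$$
using once more that $\alpha_2(Y)=0$ and, crucially, that $[X_1,Y]\in D_{max}$ by involutivity of $D_{max}$. Hence $\mathcal{L}_{X_1}\alpha_2 \in Ann(D_{max})$, and the symmetric term $[\alpha_1,X_2^+]_C$ is disposed of by the skew-symmetry of $[\cdot,\cdot]_C$.

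Summing the four contributions gives $[e_1,e_2]_C \in v(D_{max}) \oplus Ann(D_{max}) = V_i$, establishing involutivity. I do not anticipate a serious obstacle here: the two standing hypotheses are used exactly once each, Courant-integrability of $G$ to control the $V^+$-component, and involutivity of $D_{max}$ both for the step $[X_1,X_2]\in D_{max}$ and for the identity $\mathcal{L}_{X}(Ann(D_{max}))\subset Ann(D_{max})$ when $X\in D_{max}$; isotropy of $V_i$ then takes care of the Leibniz failure of the Courant bracket. The only mildly delicate point is recognising that an element of $V^+$ whose $TM$-part happens to lie in $D_{max}$ automatically lies in $v(D_{max})$, which is immediate from the graph description of $V^+$.
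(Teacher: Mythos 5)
Your proposal is correct and follows essentially the same route as the paper's proof: showing that $V_{i}=v(D_{max})\oplus Ann(D_{max})$ is $[\cdot,\cdot]_{C}$-involutive by treating the bracket of two annihilator sections, the mixed bracket via $\mathcal{L}_{X}\alpha\in Ann(D_{max})$, and the bracket of two extended sections via integrability of $G$ together with involutivity of $D_{max}$. In fact you are slightly more explicit than the paper on two points it leaves implicit, namely that isotropy of $V_{i}$ kills the Leibniz anomaly so checking on generating sections suffices, and that the graph description of $V^{+}$ forces $[X_{1}^{+},X_{2}^{+}]_{C}=[X_{1},X_{2}]^{+}\in v(D_{max})$.
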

\begin{proof}
To verify the integrability of the strong generalized almost complex structure, we show that $V_{i}$ is $[\cdot,\cdot]_{C}$-involutive.
\\ Let $\alpha , \beta $ be in $Ann(D_{max})$, then $[\alpha ,\beta ]_{C}=0$.
\\ Let us now consider $X^{+} \in v(D_{max})$ and $\alpha$ in $Ann(D_{max})$: $[X^{+},\alpha ]_{C}=\mathcal{L}_{X}(\alpha )$.
\\ We now observe that $$\mathcal{L}_{X}(\alpha ) \in V_{i} \iff \mathcal{L}_{X}(\alpha ) \in Ann(D_{max}) \iff \mathcal{L}_{X}(\alpha )(Y)=0 \hspace{0.2cm} \forall Y\in C^{\infty}(D_{max}).$$
Carrying out the calculations, we obtain:
$$(\mathcal{L}_{X}\alpha )(Y)=X(\alpha (Y))-\alpha ([X,Y])=-\alpha ([X,Y])$$
but since $D_{max}$ is $[\cdot,\cdot]$-involutive, $[X,Y]\in D_{max}$ and so $\mathcal{L}_{X}\alpha $ vanishes, or \\ $\mathcal{L}_{X}\alpha \in V_{i} \hspace{0.35cm} \forall X \in C^{\infty}(D_{max}), \hspace{0.2cm} \forall \alpha \in C^{\infty}(Ann(D_{\max})).$
Finally since $G$ is integrable, it follows that $V^{+}$ is $[\cdot,\cdot]_{C}$-involutive, and therefore $v(D_{max})$ is $[\cdot,\cdot]_{C}$-involutive. Hence $V_{i}$ is $[\cdot,\cdot]_{C}$-involutive and $\mathfrak{J}_{G}$ is integrable.
\end{proof}

\section{Weak metric structures}
\subsection{Weak almost Hermitian manifolds}
Following \cite{12} we pose:
\begin{definition}
A \emph{weak almost Hermitian manifold} $(M,g,A)$ is a Riemannian manifold $(M,g)$ of dimension $m=2n \geq 4$ endowed with a $g$-skew-symmetric non-singular endomorphism $A:TM\rightarrow TM$. $F(X,Y):=g(AX,Y)$ is called $2$-fondamental form of $(M,g,A)$.
\end{definition}
\begin{oss}
Let $(M,g,A)$ be a weak almost Hermitian manifold. Then $Q=-A^{2}$ is $g$-self-adjoint.
\end{oss}
\begin{proposition}
Let $M$ be a smooth manifold of dimension $m\geq 4$ and let $E=TM\oplus T^{*}M$. Let $G$ be a generalized metric on $E$ and let $(g,b)$ be the induced structures. If $b$ is non-degenerate then $m$ is even and $G$ induces a weak almost Hermitian structures on $M$. Conversely, if $(M,g,A)$ is a weak almost Hermitian structure then $g$ and $b(X,Y):=g(AX,Y)$, $X,Y \in C^{\infty}(TM)$ define a generalized metric on $E$.
\end{proposition}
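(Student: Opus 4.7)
The plan is to prove the two directions separately, each by a short linear-algebra argument applied fiberwise and then extended to smooth sections.

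For the forward direction, assume $G$ is a generalized metric with associated pair $(g,b)$, where $g$ is a Riemannian metric and $b\in\Omega^2(M)$, and suppose $b$ is non-degenerate. First I would recall that any non-degenerate skew-symmetric bilinear form on a finite-dimensional real vector space forces the dimension to be even (standard fact: such a form admits a symplectic basis). Applying this fiberwise to $b_x$ on $T_xM$ shows $m=2n$. Next, since $g$ is non-degenerate, I define $A\in\mathrm{End}(TM)$ by the requirement $g(AX,Y)=b(X,Y)$ for all $X,Y\in C^{\infty}(TM)$. The map $A$ is non-singular because $b$ is non-degenerate: if $AX=0$ at some point, then $b(X,\cdot)=g(AX,\cdot)\equiv 0$, forcing $X=0$. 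Skew-symmetry of $A$ with respect to $g$ follows from skew-symmetry of $b$:
\[
g(AX,Y)=b(X,Y)=-b(Y,X)=-g(AY,X)=-g(X,AY).
\]
Thus $(M,g,A)$ satisfies the requirements of Definition 7.1.1, and the 2-fundamental form associated to this weak almost Hermitian structure recovers $b$.

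For the converse, assume $(M,g,A)$ is a weak almost Hermitian manifold and set $b(X,Y):=g(AX,Y)$. The bilinearity and $C^{\infty}(M)$-linearity in each argument are immediate from those of $g$ and $A$, so $b$ is a $(0,2)$-tensor. Skew-symmetry of $b$ follows from the $g$-skew-symmetry of $A$:
\[
b(X,Y)=g(AX,Y)=-g(X,AY)=-g(AY,X)=-b(Y,X),
\]
hence $b\in\Omega^2(M)$. Since $g$ is Riemannian by definition of a weak almost Hermitian manifold and $b$ is a smooth $2$-form, Remark 3.0.3 associates to $(g,b)$ a generalized metric on $E$, namely $V^{+}=\{X+(g+b)X\mid X\in C^{\infty}(TM)\}$.

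There is no substantial obstacle; the only step that deserves a brief remark is the parity of $m$ in the forward direction, which rests on the classification of non-degenerate alternating forms. The two constructions $A\mapsto b$ and $b\mapsto A$ are manifestly inverse to one another, which gives the claimed equivalence between non-degenerate pairs $(g,b)$ defining a generalized metric and weak almost Hermitian structures $(g,A)$.
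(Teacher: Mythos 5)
Your proposal is correct and follows essentially the same route as the paper: define $A$ by $g(AX,Y)=b(X,Y)$, check non-singularity and $g$-skew-symmetry, and reverse the construction for the converse. You additionally spell out the evenness of $m$ via the fiberwise symplectic-basis argument, a point the paper's proof leaves implicit, while the paper also records the auxiliary endomorphism $Q=-A^{2}$; neither difference changes the substance.
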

\begin{proof}
Given a generalized metric $G$, one can define the following $(1,1)$-tensor $A$:
$$g(AX,Y)=b(X,Y) \hspace{0.2cm} \forall X,Y \in C^{\infty}(TM).$$
$A$ is an endomorphism of $TM$, moreover, since $g$ and $b$ are non-degenerate, $A$ is also non-degenerate, and it can be shown that it is $g$-skew-symmetric as well:
$$g(AX,Y)=b(X,Y)=-b(Y,X)=-g(AY,X)=-g(X,AY) \hspace{0.2cm} \forall X,Y \in C^{\infty}(TM).$$
Let us now consider $Q=-A^{2}$: it is clearly non-degenerate, and $g$-self-adjoint, moreover:
$$g(AX,AY)=g(-A^{2}X,Y)=g(QX,Y) \hspace{0.2cm} \forall X,Y \in C^{\infty}(TM).$$
Conversely $b(X,Y)=g(AX,Y)=-g(X,AY)=b(Y,X)$ is a $2$-form.
\end{proof}
\begin{definition}
$\cite{12}$ Let $G$ be a generalized metric on $M$ and let $A$ be a skew-symmetric endomorphism of $TM$ of constant rank. A connection $\nabla$ on $M$ is said to have \emph{the $A$-torsion condition} if its torsion tensor $T^{\nabla}$ satisfies:
$$T^{\nabla}(AX,Y)=T^{\nabla}(X,AY) \hspace{0.2cm} \forall X,Y \in C^{\infty}(TM).$$
\end{definition}
\begin{oss}
We have already seen in previous proposition that, given a generalized metric $G$, it is possible to find a skew-symmetric endomorphism $A$ associated with $G$. In particular if $b$ is non-degenerate, then $A$ also has constant rank.
\end{oss}
\begin{lemma}
Let $G$ be a generalized metric on $M$. Let $(g,b,\nabla ^{+})$ be the induced structures and $A$ the induced endomorphism. If $\nabla ^{+}b=0$ and $b$ satisfies the following equality:
$$b(g^{-1}(db(X,AY))-g^{-1}(db(AX,Y)),Z)+b(g^{-1}(db(AY,Z)),X)+$$
$$-b(g^{-1}(db(Y,Z)),AX)-b(g^{-1}(db(Z,AX)),Y)+b(g^{-1}(db(Z,X)),AY)=0,$$
then $\nabla ^{+}$ have the $A$-torsion condition.
\end{lemma}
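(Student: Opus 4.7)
The plan is to reduce the $A$-torsion condition to a pointwise linear-algebra statement of the form $(I+2A)\xi = 0$ and then use the $g$-skew-symmetry of $A$ to force $\xi = 0$. The two main ingredients will be Lemma 3.2.2 (ii) (which applies because $\nabla^+ b = 0$) and the identification $T^{\nabla^+}(U,V) = -g^{-1}(db(U,V,\cdot))$ coming from Proposition 4.2.1. By non-degeneracy of $g$, the $A$-torsion condition $T^{\nabla^+}(AX,Y) = T^{\nabla^+}(X,AY)$ is equivalent to the scalar identity $db(AX,Y,Z) = db(X,AY,Z)$ for all $X,Y,Z \in C^{\infty}(TM)$, and that is the target I would aim for.

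First I would apply Lemma 3.2.2 (ii) at the triples $(AX,Y,Z)$ and $(X,AY,Z)$ and subtract, substituting $T^{\nabla^+} = -g^{-1}(db(\cdot,\cdot))$ throughout so that every torsion term becomes a $b(g^{-1}(db(\cdot,\cdot)),\cdot)$ expression. I expect the resulting identity to contain exactly the same five $b(g^{-1}(db(\cdot,\cdot)),\cdot)$-terms that appear in the hypothesis, but with four of them carrying the opposite sign; adding the hypothesis should then cancel those four terms pairwise and leave the single factor-of-two identity
\[
2\bigl[b(g^{-1}(db(X,AY)),Z) - b(g^{-1}(db(AX,Y)),Z)\bigr] = db(AX,Y,Z) - db(X,AY,Z).
\]
Using $b(U,V) = g(AU,V) = -g(U,AV)$, each left-hand term rewrites as $-db(\cdot,\cdot,AZ)$, so the identity collapses to $2f(X,Y,AZ) = f(X,Y,Z)$, with $f(X,Y,Z) := db(AX,Y,Z) - db(X,AY,Z)$ the quantity I want to kill.

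Finally, for fixed $X,Y$ the 1-form $\omega_{X,Y}(Z) := f(X,Y,Z)$ satisfies $\omega_{X,Y}(Z) = 2\omega_{X,Y}(AZ)$; setting $\xi := g^{-1}(\omega_{X,Y})$ and using the $g$-skew-symmetry of $A$ once more turns this into $(I+2A)\xi = 0$. Since $A$ is skew with respect to the positive-definite $g$, its complex eigenvalues lie in $i\mathbb{R}$, so $-1/2$ is not in its spectrum and $I+2A$ is invertible; hence $\xi = 0$, so $f \equiv 0$, which is the desired conclusion. The hardest step will be the sign bookkeeping in the middle paragraph: one has to verify that, after the substitution $T^{\nabla^+} = -g^{-1}(db)$, the difference of the two instances of Lemma 3.2.2 (ii) produces precisely the five-term pattern of the hypothesis with the opposite sign on four of them, so that the addition eliminates everything except the leading difference term. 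If anything goes wrong in the proof, it will be there; by contrast, the concluding linear-algebra step is short and uses only the well-known fact that a real skew-symmetric operator has no nonzero real eigenvalue.
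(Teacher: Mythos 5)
Your strategy is at bottom the paper's: reduce the $A$-torsion condition, via non-degeneracy of $g$ and $g(T^{\nabla^{+}}(X,Y),Z)=-db(X,Y,Z)$, to the identity $db(AX,Y,Z)=db(X,AY,Z)$, and then expand the two $db$-terms using $\nabla^{+}b=0$. Where you diverge is in the anticipated outcome of that expansion, and this is exactly the spot you flagged as risky. From the cyclic formula the paper works with,
$$db(X,Y,Z)=\sum_{cyc}\bigl((\nabla^{+}_{X}b)(Y,Z)-b(T^{\nabla^{+}}(X,Y),Z)\bigr),$$
the hypothesis $\nabla^{+}b=0$ together with $T^{\nabla^{+}}(U,V)=-g^{-1}(db(U,V))$ gives
$$db(U,V,W)=b(g^{-1}(db(U,V)),W)+b(g^{-1}(db(V,W)),U)+b(g^{-1}(db(W,U)),V),$$
and subtracting the instances $(AX,Y,Z)$ and $(X,AY,Z)$ reproduces the hypothesized five-term expression term by term, with \emph{all} signs matching (up to an overall sign, depending on the sign convention for the cyclic formula). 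So the assumed identity annihilates the whole difference at once: $db(X,AY,Z)=db(AX,Y,Z)$ follows immediately, there is no surviving factor-of-two term, no relation $f(X,Y,Z)=2f(X,Y,AZ)$, and your concluding $(I+2A)$-invertibility argument --- correct in itself, since $A$ is skew for the positive-definite $g$ --- is never needed. The ``four terms with opposite sign'' pattern you predict would arise only if you used verbatim the displayed statement $b(T^{\nabla^{+}}(X,Y),Z)-b(T^{\nabla^{+}}(Z,X),Y)-b(T^{\nabla^{+}}(Y,Z),X)=db(X,Y,Z)$ of the lemma in Section 4 (what you cite as Lemma 3.2.2(ii)); note that this display is not consistent with the cyclic formula quoted in its own proof (its right-hand side is cyclic-invariant, its left-hand side is not), so it is not a safe basis for the sign bookkeeping. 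Redone from the cyclic formula, your argument collapses to the paper's one-line substitution and is correct; as written, the middle paragraph would not yield the residual identity you state, but the conclusion is unaffected because the cancellation is total rather than partial.
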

\begin{proof}
The $A$-torsion condition is equivalent to the following equality:
$$0=T^{\nabla^{+}}(AX,Y,Z)-T^{\nabla ^{+}}(X,AY,Z)=-db(AX,Y,Z)+db(X,AY,Z)$$ $$ \forall X,Y,Z \in C^{\infty}(TM).$$
We remark that, in general, the following holds:
$$db(X,Y,Z)=\sum _{cyc}((\nabla _{X}^{+}b)(Y,Z)-b(T^{\nabla ^{+}}(X,Y),Z)).$$
Thus, we can substitute this equality into $db(AX,Y,Z)$ and $db(X,AY,Z)$, remarking that $\nabla ^{+}b=0$ and $T^{\nabla ^{+}}(X,Y)=-g^{-1}(db(X,Y))$, we have the statement.
\end{proof}
\begin{definition}
$\cite{12}$ Let $G$ be a generalized metric on $M$, $\nabla$ an affine connection on $M$ and $Q:TM\rightarrow TM$ a $g$-self-adjoint endomorphism. $\nabla$ is said to have \emph{the $Q$-torsion condition} if its torsion tensor $T^{\nabla}$ satisfies:
$$T^{\nabla}(QX,Y)=T^{\nabla}(X,QY)=Q(T^{\nabla}(X,Y)) \hspace{0.35cm} \forall X,Y \in C^{\infty}(TM).$$
\end{definition}
\begin{lemma} Let $G$ be a generalized metric and $(g,b,\nabla ^{+})$ the induced structures. Let $A,Q$ be the endomorphism be defined as in the proof of Proposition 7.1.1. If $\nabla ^{+}$ have the $A$-torsion condition and $db(QX,Y,Z)=db(X,Y,QZ) \hspace{0.2cm} \forall X,Y \in C^{\infty}(TM)$, then $\nabla ^{+}$ have also the $Q$-torsion condition.
\end{lemma}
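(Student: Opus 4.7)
The plan is to reduce the $Q$-torsion condition to the two input hypotheses via the torsion formula of Proposition 4.2.1, namely $g(T^{\nabla^+}(X,Y),Z)=-db(X,Y,Z)$, combined with the $g$-self-adjointness of $Q=-A^2$ observed in Remark 7.1.1. The $Q$-torsion condition has two equalities, $T^{\nabla^+}(QX,Y)=T^{\nabla^+}(X,QY)$ and $T^{\nabla^+}(QX,Y)=Q(T^{\nabla^+}(X,Y))$, and I would prove them separately, each by a short manipulation.

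For the first equality, the plan is simply to iterate the $A$-torsion condition. Writing $T=T^{\nabla^+}$ for brevity, the hypothesis $T(AX,Y)=T(X,AY)$ applied with $X$ replaced by $AX$ gives $T(A^2X,Y)=T(AX,AY)$; applied with $Y$ replaced by $AY$ it gives $T(AX,AY)=T(X,A^2Y)$. Chaining these and using $Q=-A^2$ yields $T(QX,Y)=T(X,QY)$, which is the first half of the $Q$-torsion condition.

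For the second equality $T(QX,Y)=Q(T(X,Y))$, the plan is to test both sides against an arbitrary vector field $Z$ via $g$ and to use the torsion formula. On one side, $g(T(QX,Y),Z)=-db(QX,Y,Z)$. On the other, since $Q$ is $g$-self-adjoint, $g(Q(T(X,Y)),Z)=g(T(X,Y),QZ)=-db(X,Y,QZ)$. The hypothesis $db(QX,Y,Z)=db(X,Y,QZ)$ is precisely what is needed to equate these two expressions, and since $Z$ is arbitrary and $g$ is non-degenerate, the identity $T(QX,Y)=Q(T(X,Y))$ follows.

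I do not anticipate any real obstacle: the proof is essentially bookkeeping once one has both the algebraic iteration of the $A$-torsion condition and the passage between $T^{\nabla^+}$ and $db$ through $g$. The only delicate point to keep in mind is the sign $Q=-A^2$ when converting the $A$-torsion identity into a $Q$-identity, and the fact that self-adjointness of $Q$ with respect to $g$ is what allows one to move $Q$ from the value of the torsion across to the $Z$-slot of $db$.
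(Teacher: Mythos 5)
Your proposal is correct and follows essentially the same route as the paper: both halves of the $Q$-torsion condition are obtained from the $A$-torsion hypothesis and from pairing the torsion with $Z$ via $g(T^{\nabla^+}(X,Y),Z)=-db(X,Y,Z)$, using the $g$-self-adjointness of $Q$ and the assumption on $db$. You are merely more explicit than the paper in spelling out the iteration $T(A^2X,Y)=T(AX,AY)=T(X,A^2Y)$ and the non-degeneracy of $g$ at the end, which is fine.
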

\begin{proof}
Recall that $Q=-A^{2}$. Since $\nabla ^{+}$ have the $A$-torsion condition, we have that $$T^{\nabla ^{+}}(QX,Y)=T^{\nabla ^{+}}(X,QY).$$
Moreover we know that $$g(T^{\nabla ^{+}}(QX,Y),Z)=T^{\nabla ^{+}}(QX,Y,Z)=-db(QX,Y,Z),$$
$$g(QT^{\nabla ^{+}}(X,Y),Z)=T^{\nabla ^{+}}(X,Y,QZ)=-db(X,Y,QZ).$$
Thus, from the assumption on $db$, it follows that $T^{\nabla ^{+}}(QX,Y)=QT^{\nabla ^{+}}(X,Y) \hspace{0.2cm}$ \\ $\forall X,Y \in C^{\infty}(TM)$.
\end{proof}

\subsection{Weak nearly K\"ahler manifolds}
\begin{definition}
$\cite{12}$ A weak almost Hermitian manifold is said to be \emph{weak nearly K\"ahler} if:
$$(\nabla _{X}^{LC}A)X=0 \iff (\nabla_{X}^{LC}F)(X,Y)=0 \hspace{0.2cm} \forall X,Y \in C^{\infty}(TM).$$
If $\nabla ^{LC}A=0$, then such structure is called \emph{weak K\"ahler manifold}.
\end{definition}
\begin{proposition}
$\cite{12}$ Let $(M,g,A,Q)$ be a weak almost Hermitian manifold. Let $b(X,Y)=g(AX,Y)$ and let $G$ be the generalized metric induced by $(g,b)$. Let $\nabla$ be an affine connection with totally skew-symmetric torsion and such that $\tilde{\nabla}G=0$, where $\tilde{\nabla}$ is the generalized connection induced by $\nabla$. Then $\nabla$ has the $A$-torsion condition if and only if $(M,g,A,Q)$ is a weak nearly K\"ahler manifold.
\end{proposition}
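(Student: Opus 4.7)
The plan is to deduce from $\tilde{\nabla}G=0$ that $\nabla$ is metric-compatible and preserves $A$, and then to translate both the $A$-torsion condition on $\nabla$ and the weak nearly K\"ahler condition on $\nabla^{LC}$ into the same algebraic identity for the totally skew-symmetric torsion $T^{\nabla}$.

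The first step is to extract the consequences of $\tilde{\nabla}G=0$. Writing $G$ in its standard block form in terms of $g$ and $b$ and using that the generalized connection acts diagonally through $\nabla$, a component-wise comparison of $\tilde{\nabla}_{X+\xi}(G(Y+\eta))$ with $G(\tilde{\nabla}_{X+\xi}(Y+\eta))$ forces both $\nabla g=0$ and $\nabla b=0$; this refines Lemma 4.2.2, which proves only $\nabla^{+}b=0$ starting from the a priori metric-compatible $\nabla^{+}$. Since $b(X,Y)=g(AX,Y)$, combining $\nabla g=0$ and $\nabla b=0$ yields $g((\nabla_{X}A)Y,Z)=0$ for all $Y,Z$, hence $\nabla A=0$.

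Next, I would exploit the assumption that $T^{\nabla}$ is totally skew-symmetric. Together with $\nabla g=0$, a standard difference-tensor argument gives $\nabla_{X}Y=\nabla^{LC}_{X}Y+\tfrac{1}{2}T^{\nabla}(X,Y)$. Substituting this into $(\nabla^{LC}_{X}A)Y=\nabla^{LC}_{X}(AY)-A(\nabla^{LC}_{X}Y)$ and using $\nabla A=0$, I obtain
$$(\nabla^{LC}_{X}A)Y=-\tfrac{1}{2}\bigl(T^{\nabla}(X,AY)-AT^{\nabla}(X,Y)\bigr).$$

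Finally, I would compare the two conditions. Setting $Y=X$ and using $T^{\nabla}(X,X)=0$ shows $(\nabla^{LC}_{X}A)X=-\tfrac{1}{2}T^{\nabla}(X,AX)$, so the weak nearly K\"ahler condition $(\nabla^{LC}_{X}A)X=0$ for all $X$ is equivalent, by polarization together with the skew-symmetry of $T^{\nabla}$, to the identity $T^{\nabla}(X,AY)+T^{\nabla}(Y,AX)=0$. On the other hand, skew-symmetry of $T^{\nabla}$ rewrites the $A$-torsion condition $T^{\nabla}(AX,Y)=T^{\nabla}(X,AY)$ as $-T^{\nabla}(Y,AX)=T^{\nabla}(X,AY)$, which is exactly the same identity. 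The main obstacle is the first step: Lemma 4.2.2 is stated only for the distinguished connection $\nabla^{+}$ associated with $G$, so for the arbitrary $\nabla$ considered here I would need to redo the block computation carefully to extract metric-compatibility $\nabla g=0$ in addition to $\nabla b=0$; once this is in place, the remainder is routine tensor algebra.
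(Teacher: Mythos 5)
The paper does not actually prove this proposition: it is quoted from \cite{12} and stated without proof, so there is no in-paper argument to compare yours with. Judged on its own, your proof is correct and self-contained. The block computation in your first step does work: testing $\tilde{\nabla}G=G\tilde{\nabla}$ on sections of the form $0+\eta$ gives, from the $g^{-1}$ block, $\nabla_{X}(g^{-1}\eta)=g^{-1}\nabla_{X}\eta$, i.e.\ $\nabla g=0$, while testing on $Y+0$ gives $\nabla(g^{-1}b)=0$, and since $g^{-1}b=A$ (because $b=g\circ A$ as a map $TM\to T^{*}M$) this yields $\nabla A=0$ directly; so the worry you flag about Lemma 4.2.2 only covering $\nabla^{+}$ is resolved exactly as you propose, and in fact you get $\nabla A=0$ without even passing through $\nabla b=0$. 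The remaining steps are the standard Bismut-type argument: for a $g$-metric connection with totally skew torsion, $\nabla=\nabla^{LC}+\tfrac{1}{2}T^{\nabla}$, whence $(\nabla^{LC}_{X}A)Y=-\tfrac{1}{2}\bigl(T^{\nabla}(X,AY)-AT^{\nabla}(X,Y)\bigr)$, and setting $Y=X$ plus polarization identifies the weak nearly K\"ahler condition with $T^{\nabla}(X,AY)+T^{\nabla}(Y,AX)=0$, which by the ordinary antisymmetry of the torsion tensor is precisely the $A$-torsion condition $T^{\nabla}(AX,Y)=T^{\nabla}(X,AY)$. All coefficients and sign conventions check out, so your argument would serve as a legitimate proof of the quoted result within the paper's framework.
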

\begin{proposition}
Let $G$ be a generalized metric on $M$, $dimM=2n\geq 4$, and $(g,b,A,Q)$ the induced structures as in Proposition 7.7.1.
If $b$ is non-degenerate, $\nabla ^{+}b=0$ and $b$ satisfies the following equality:
$$b(g^{-1}(db(X,AY))-g^{-1}(db(AX,Y)),Z)+b(g^{-1}(db(AY,Z)),X)+$$
$$-b(g^{-1}(db(Y,Z)),AX)-b(g^{-1}(db(Z,AX)),Y)+b(g^{-1}(db(Z,X)),AY)=0,$$
then $G$ induces a weak nearly K\"ahler manifold $(M,g,A,Q)$.
\end{proposition}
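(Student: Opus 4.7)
The strategy is to reduce the statement to the characterization theorem cited from \cite{12} (the proposition in Subsection~7.2 that identifies weak nearly K\"ahler manifolds with those admitting a connection having totally skew-symmetric torsion, satisfying $\tilde{\nabla}G=0$, and obeying the $A$-torsion condition). I will show that the canonical data $(g,b,A,Q,\nabla^{+})$ induced by $G$ fulfil each of these hypotheses with the choice $\nabla=\nabla^{+}$.

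First, since $b$ is non-degenerate, Proposition~7.1.1 applies and guarantees that $m$ is even and that $G$ induces a weak almost Hermitian structure $(M,g,A)$, where $A$ is the $g$-skew-symmetric non-singular endomorphism defined by $g(AX,Y)=b(X,Y)$. Setting $Q=-A^{2}$, the remark following Definition~7.1 gives that $Q$ is $g$-self-adjoint, so all the underlying weak almost Hermitian data $(M,g,A,Q)$ are in place.

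Next I would verify the three hypotheses required by the characterization proposition for the connection $\nabla^{+}$. The torsion of $\nabla^{+}$ is totally skew-symmetric: by Proposition~4.2.1 one has $T^{\nabla^{+}}(X,Y,Z)=-db(X,Y,Z)$, and $db\in\Omega^{3}(TM)$ is automatically totally skew. The parallelism $\tilde{\nabla}G=0$ follows immediately from the hypothesis $\nabla^{+}b=0$ via Lemma~4.2.2. Finally, the $A$-torsion condition for $\nabla^{+}$ is provided by Lemma~7.2.1, whose input is precisely the algebraic identity on $b$ postulated in the statement. Applying the characterization proposition then yields that $(M,g,A,Q)$ is weak nearly K\"ahler.

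The only delicate point is bookkeeping: one must check that the explicit five-term identity assumed for $b$ really does reorganize, after substituting $T^{\nabla^{+}}=-g^{-1}(db)$ and using $\nabla^{+}b=0$ in the cyclic expansion $db(X,Y,Z)=\sum_{cyc}\bigl((\nabla^{+}_{X}b)(Y,Z)-b(T^{\nabla^{+}}(X,Y),Z)\bigr)$, into the equality $db(AX,Y,Z)=db(X,AY,Z)$, which is the defining condition of the $A$-torsion property. This rearrangement is exactly the computation carried out in the proof of Lemma~7.2.1, so no new calculation is required here; the body of the proof reduces to a careful invocation of previously established results.
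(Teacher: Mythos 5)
Your proposal is correct and follows essentially the same route as the paper: invoke Proposition 7.1.1 for the weak almost Hermitian data, Lemma 7.2.1 (with the five-term identity and $\nabla^{+}b=0$) for the $A$-torsion condition, and then the characterization Proposition 7.4.1 from \cite{12} applied to $\nabla^{+}$. If anything, you are slightly more careful than the paper in explicitly checking the remaining hypotheses of that characterization, namely that $T^{\nabla^{+}}=-db$ is totally skew-symmetric and that $\tilde{\nabla}G=0$ follows from $\nabla^{+}b=0$ via Lemma 4.2.2.
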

\begin{proof}
From Proposition 7.7.1 and Lemma 7.2.1, we obtain that $G$ induces a weak almost Hermitian structure and that $\nabla ^{+}$ satisfies the $A$-torsion condition. Moreover $\nabla ^{+}g=\nabla ^{+}b=0$; hence, by Proposition 7.4.1, the claim follows.
\end{proof}

\section{Generalized K\"ahler structures}
In this section, we will consider only strong generalized almost complex / complex structures, and therefore we will omit specifying “strong".
\begin{definition} 
\cite{1} Let $\mathfrak{J}$ be a generalized almost complex structure and let $G$ be a generalized metric. The metric $G$ is a \emph{generalized Hermitian metric} with respect to $\mathfrak{J}$ if 
$$\langle G\mathfrak{J}v,\mathfrak{J}w\rangle =\langle Gv,w\rangle \ \forall v,w \in C^{\infty}(E).$$
In this case we call the pair $(\mathfrak{J},G)$ a \emph{generalized Hermitian structure}.
\end{definition}
\begin{oss}
This means that the generalized metric $G$ must be compatible with $\mathfrak{J}$. Equivalently, $G$ is a generalized Hermitian metric if and only if it commutes with $\mathfrak{J}$, i.e. $G\mathfrak{J}=\mathfrak{J}G$. Hence the eigenbundles of $G$, \\ $V^{\pm}=\{X\pm g(X)+b(X)|X\in C^{\infty}(TM)\}$ are invariant under $\mathfrak{J}$.
\\ Moreover, from the commutativity of $G$ and $\mathfrak{J}$ it follows that $G\mathfrak{J}$ is a generalized almost complex structure that anticommutes with $\mathfrak{J}$.
\end{oss}
\begin{definition} 
\cite{7,8} A \emph{generalized K\"ahler structure} is a pair of generalized complex structure $\mathfrak{J}_{1}$, $ \mathfrak {J}_{2}$ which commute and such that $-\mathfrak{J}_{1}\mathfrak{J}_{2}$ is a generalized metric.
\end{definition}
\begin{oss}
\cite{7} Equivalently, a generalized K\"ahler structure may be described as a generalized complex structure $\mathfrak{J}$ together with a generalized Hermitian metric $G$ such that $G\mathfrak{J}$ is again a generalized complex structure.
\end{oss}
\begin{oss}
\cite{7} Let $G$ be a generalized metric and let $V^{\pm}$ denote its eigenbundles. As already observed, there are bundle isomorphism $\pi :V^{\pm}\rightarrow TM$. Given a generalized almost complex structure $\mathfrak{J}$, it induces two almost complex structures $J_{\pm}:TM\rightarrow TM$ by:
$$(J_{\pm}X)^{\pm}=\mathfrak{J}X^{\pm}.$$
or equivalently $J_{\pm}X=\pi (\mathfrak{J}X^{\pm})$, where the superscript $\pm$ denotes the extension of a vector field to $V^{\pm}$.
\\ Thus, if $(\mathfrak{J}_{1},\mathfrak{J}_{2})$ is a generalized K\"ahler structure, then $\mathfrak{J}_{1}$ induces two Hermitian structure $J_{\pm}$ on $TM$, while $\mathfrak{J}_{2}$ induces $J_{+}$ and $-J_{-}$.
\end{oss} 
\hspace{-0.62cm} We will study $[\cdot,\cdot]_{C}$-integrability of a generalized Hermitian structure. This is equivalent to find sufficient conditions under which a generalized Hermitian structure is a generalized K\"ahler structure. Later we will also analyze $[\cdot,\cdot]_{\nabla ^{+}}$-integrability for these structures.
\begin{proposition} Let $(G,\mathfrak{J})$ be a generalized Hermitian structure. Let $g$ be the Riemannian metric induced by $G$ and let $b$ be the $2$-form induced by $G$. Let $(M,g,J_{\pm})$ be the almost Hermitian structures on $TM$ induced by $\mathfrak{J}$. If $db=0$ and $\nabla ^{+}J_{+}=\nabla ^{+}J_{-}=0$, then $(G,\mathfrak{J})$ is a generalized K\"ahler structure.
\end{proposition}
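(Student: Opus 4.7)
The plan is to reduce the statement to the classical bi-K\"ahler picture and then verify Courant involutivity of the $+i$-eigenbundles of $\mathfrak{J}$ and $G\mathfrak{J}$. Since $db=0$, Proposition 4.2.1 gives $T^{\nabla^+}=-db=0$; combined with $\nabla^+g=0$ and uniqueness of the Levi-Civita connection (Proposition 5.2.2), this identifies $\nabla^+$ with $\nabla^{LC}$. The hypotheses $\nabla^+J_{\pm}=0$ thus read $\nabla^{LC}J_{\pm}=0$, and torsion-freeness of $\nabla^{LC}$ forces $N_{J_{\pm}}=0$. Hence $(M,g,J_+)$ and $(M,g,J_-)$ are two K\"ahler manifolds sharing the same Riemannian metric. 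By Remark 8.1.2, it suffices to show that $\mathfrak{J}$ and $G\mathfrak{J}$ are both $[\cdot,\cdot]_{C}$-integrable, since they commute (from $\mathfrak{J}G=G\mathfrak{J}$ and $\mathfrak{J}^{2}=-I$) and $-\mathfrak{J}(G\mathfrak{J})=G$ recovers the generalized metric.

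To streamline the computation, I would first reduce to $b=0$: since $db=0$, the transform $e^{b}$ is a Courant-bracket automorphism (Definition 2.4), and it conjugates $(G,\mathfrak{J})$ to $(G_{g},e^{b}\mathfrak{J}e^{-b})$ without affecting $[\cdot,\cdot]_{C}$-integrability. Under $b=0$ we have $V^{\pm}=\{X\pm gX:X\in C^{\infty}(TM)\}$, and by Remark 8.1.3 the $+i$-eigenbundles of $\mathfrak{J}$ and $G\mathfrak{J}$ split as $\mathfrak{L}_{\mathfrak{J}}=L_{+}\oplus L_{-}$ and $\mathfrak{L}_{G\mathfrak{J}}=L_{+}\oplus\bar{L}_{-}$, where $L_{\pm}=\{X\pm gX:X\in T^{1,0}_{J_{\pm}}M\}$.

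The key step is computing the Courant bracket of lifts. For $X,Y\in T^{1,0}_{J_{+}}M$, expanding $[X^{+},Y^{+}]_{C}$ (the exact $d$-term vanishes by symmetry of $g$) and using $\nabla^{LC}g=0$ yields, evaluated on $W$, the 1-form $g([X,Y],W)+g(Y,\nabla^{LC}_{W}X)-g(X,\nabla^{LC}_{W}Y)$. The parallelness $\nabla^{LC}J_{+}=0$ keeps $\nabla^{LC}_{W}X$ and $\nabla^{LC}_{W}Y$ in $T^{1,0}_{J_{+}}M$, while the K\"ahler isotropy $g(T^{1,0}_{J_{+}},T^{1,0}_{J_{+}})=0$ kills the last two terms; using integrability of $J_{+}$ (so $[X,Y]\in T^{1,0}_{J_{+}}M$), this gives $[X^{+},Y^{+}]_{C}=[X,Y]^{+}\in L_{+}$. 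An identical argument yields $[X^{-},Y^{-}]_{C}\in L_{-}$ for $X,Y\in T^{1,0}_{J_{-}}M$. For the mixed bracket, a direct computation produces $[X^{+},Y^{-}]_{C}=(-\nabla^{LC}_{Y}X)^{+}+(\nabla^{LC}_{X}Y)^{-}$, and parallelness of both $J_{\pm}$ places each summand in the correct holomorphic piece of $\mathfrak{L}_{\mathfrak{J}}$, and in the analogous piece (anti-holomorphic on the $V^{-}$-side) of $\mathfrak{L}_{G\mathfrak{J}}$ when $Y\in T^{0,1}_{J_{-}}M$.

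The main obstacle is precisely this mixed bracket: one must carry out the $V^{+}\oplus V^{-}$-decomposition of $[X^{+},Y^{-}]_{C}$ correctly (the tangential projections conspire via $[X,Y]=\nabla^{LC}_{X}Y-\nabla^{LC}_{Y}X$ to give the clean split above) and check that each component lands in the appropriate holomorphic or anti-holomorphic subbundle, depending on which of $\mathfrak{J}$ or $G\mathfrak{J}$ is being tested. Once this is done, both $\mathfrak{L}_{\mathfrak{J}}$ and $\mathfrak{L}_{G\mathfrak{J}}$ are Courant-involutive, so $\mathfrak{J}$ and $G\mathfrak{J}$ are $[\cdot,\cdot]_{C}$-integrable generalized complex structures, and $(G,\mathfrak{J})$ is a generalized K\"ahler structure.
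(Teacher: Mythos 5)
Your proposal is correct, but it proves the key step by a different mechanism than the paper. Both arguments begin identically: $db=0$ forces $T^{\nabla^{+}}=0$, so $\nabla^{+}=\nabla^{LC}$, and $\nabla^{+}J_{\pm}=0$ then makes $(M,g,J_{\pm})$ Kähler. From there the paper does not compute any brackets: it writes down the explicit Gualtieri pair $\mathfrak{J}_{1},\mathfrak{J}_{2}$ built from $(J_{\pm},\omega_{\pm})$ and conjugated by $e^{b}$, invokes their known $[\cdot,\cdot]_{C}$-integrability for bi-Kähler data, and then identifies $\mathfrak{J}$ with $\mathfrak{J}_{1}$ (and $G\mathfrak{J}$ with $\mathfrak{J}_{2}$) by observing that a $G$-compatible generalized almost complex structure is determined by the pair $J_{\pm}$ it induces on $V^{\pm}\cong TM$. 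You instead make the argument self-contained: after the legitimate $b$-field reduction to $b=0$, you decompose the $+i$-eigenbundles as $L_{+}\oplus L_{-}$ and $L_{+}\oplus\bar{L}_{-}$ and verify Courant involutivity directly; your bracket computations check out, including the mixed identity $[X^{+},Y^{-}]_{C}=(-\nabla^{LC}_{Y}X)^{+}+(\nabla^{LC}_{X}Y)^{-}$, with parallelness of $J_{\pm}$ and the isotropy of $T^{1,0}_{J_{\pm}}$ (which comes from $g(J_{\pm}\cdot,J_{\pm}\cdot)=g(\cdot,\cdot)$, itself a consequence of $\mathfrak{J}$ being orthogonal and $G$-compatible) placing each term in the right summand. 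In effect you re-prove the Gualtieri integrability result that the paper cites; the paper's route is shorter but leans on that external construction and on the identification step "$\mathfrak{J}=\mathfrak{J}_{1}$ up to $b$-transform", whereas yours is more elementary and exhibits exactly where each hypothesis ($\nabla^{LC}J_{\pm}=0$, integrability of $J_{\pm}$, closedness of $b$) enters the involutivity check.
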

\begin{proof}
Since $db=0$, by Proposition 5.2.1 we have $\nabla ^{+}=\nabla ^{-}=\nabla ^{LC}$. From the hypothesis on $J_{\pm}$ it follows that $(M,g,J_{\pm})$ are K\"ahler manifolds. Consider the generalized almost complex structures:
$$\mathfrak{J}_{1}=\frac{1}{2}e^{b}\left( \begin{matrix}
J_{+}+J_{-} && -(\omega _{+}^{-1}-\omega _{-}^{-1}) \\ 
\omega _{+}-\omega _{-} && -(J_{+}^{*}+J_{-}^{*})
\end{matrix} \right) e^{-b},$$
$$\mathfrak{J}_{2}=\frac{1}{2}e^{b}\left( \begin{matrix}
J_{+}-J_{-} && -(\omega _{+}^{-1}+\omega _{-}^{-1}) \\ 
\omega _{+}+\omega _{-} && -(J_{+}^{*}-J_{-}^{*}) 
\end{matrix} \right) e^{-b},$$
where $\omega _{\pm}$ are the K\"ahler forms associated to $(g,J_{\pm})$. These structures are $[\cdot,\cdot]_{C}$-integrable since they arise from K\"ahler manifolds. Hence $(\mathfrak{J}_{1},\mathfrak{J}_{2})$ is a generalized K\"ahler structure. Now we observe that $\mathfrak{J}$ and $\mathfrak{J}_{1}$ induce the same complex structures on $TM$, so:
$$\pi (\mathfrak{J}X^{+})=J_{+}X=\pi (\mathfrak{J}_{1}X^{+}),$$
$$\pi (\mathfrak{J}X^{-})=J_{-}X=\pi (\mathfrak{J}_{1}X^{-}).$$
Since $\pi$ is an isomorphism, $\mathfrak{J}$ and $\mathfrak{J}_{1}$ coincide up to a $b$-field transform on the whole $E$, and therefore $\mathfrak{J}$ is $[\cdot,\cdot]_{C}$-integrable. By the same argument applied to $\mathfrak{J}_{2}$ one obtains the $[\cdot,\cdot]_{C}$-integrability of $G\mathfrak{J}$, hence $(G,\mathfrak{J})$ is a generalized K\"ahler structure.
\end{proof}
\begin{lemma}
\cite{1,7} Let $(\mathfrak{J},G)$ be a generalized K\"ahler structure. Then we can write
 $$E\otimes \mathbb{C}=V_{i}^+ \oplus V_{-i}^+ \oplus V_{i}^- \oplus V_{-i}^-$$ 
in terms of the simultaneous eigenbundles of $\mathfrak{J}$ and $G$. Each eigenbundle is involutive and isotropic.
\end{lemma}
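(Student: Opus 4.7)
The plan is to exploit the two generalized complex structures $\mathfrak{J}_1=\mathfrak{J}$ and $\mathfrak{J}_2=G\mathfrak{J}$ associated with the generalized Kähler pair $(\mathfrak{J},G)$, together with the commutativity of $\mathfrak{J}$ and $G$, to perform a simultaneous diagonalization of $E\otimes\mathbb{C}$ and then to identify each simultaneous eigenbundle as an intersection of maximal isotropic, Courant-involutive subbundles.

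First I would observe that, since $(\mathfrak{J},G)$ is a generalized Kähler structure, we have $G\mathfrak{J}=\mathfrak{J}G$ (Remark 8.0.1) and both $\mathfrak{J}_1=\mathfrak{J}$ and $\mathfrak{J}_2=G\mathfrak{J}$ are generalized complex structures. Denote by $L^{j}_{\pm}\subset E\otimes\mathbb{C}$ the $\pm i$-eigenbundles of $\mathfrak{J}_j$, $j=1,2$. Each $L^{j}_{\pm}$ is maximal isotropic and $[\cdot,\cdot]_{C}$-involutive, being the eigenbundle of a generalized complex structure. Because $G$ commutes with $\mathfrak{J}$, the restriction of $\mathfrak{J}$ to each real eigenbundle $V^{\pm}$ of $G$ is an almost complex structure, so $V^{\pm}\otimes\mathbb{C}$ splits as $V^{\pm}_i\oplus V^{\pm}_{-i}$, the $\pm i$-eigenbundles of $\mathfrak{J}|_{V^{\pm}}$. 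This already yields the claimed decomposition
\[
E\otimes\mathbb{C}=V^{+}_{i}\oplus V^{+}_{-i}\oplus V^{-}_{i}\oplus V^{-}_{-i}.
\]

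Next I would identify each simultaneous eigenbundle with an intersection of eigenbundles of $\mathfrak{J}_1$ and $\mathfrak{J}_2$. For $v\in V^{+}_{i}$ one has $Gv=v$ and $\mathfrak{J}v=iv$, hence $\mathfrak{J}_2 v=G\mathfrak{J}v=iv$, so $V^{+}_{i}=L^{1}_{+}\cap L^{2}_{+}$. An identical calculation on the other three sectors gives
\[
V^{+}_{-i}=L^{1}_{-}\cap L^{2}_{-},\qquad V^{-}_{i}=L^{1}_{+}\cap L^{2}_{-},\qquad V^{-}_{-i}=L^{1}_{-}\cap L^{2}_{+}.
\]
From this presentation, isotropy of each summand is immediate because each $V^{\pm}_{\pm i}$ sits inside the maximal isotropic bundle $L^{1}_{\pm}$. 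Involutivity follows from the elementary fact that the intersection of two subbundles closed under $[\cdot,\cdot]_{C}$ is again closed under $[\cdot,\cdot]_{C}$: if $v,w$ belong to both $L^{1}_{\pm}$ and $L^{2}_{\pm}$, then $[v,w]_{C}$ lies in each by the Courant involutivity of the $\mathfrak{J}_j$-eigenbundles, hence lies in their intersection.

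The main bookkeeping obstacle is making sure that the signs match up correctly when converting $Gv=\pm v$ and $\mathfrak{J}v=\pm iv$ into eigenvalues for $\mathfrak{J}_2=G\mathfrak{J}$; once this is done, the rest of the argument is formal. I would therefore organize the write-up as: (1) simultaneous diagonalization from $[G,\mathfrak{J}]=0$, (2) identification of each simultaneous eigenbundle as an intersection $L^{1}_{\epsilon_{1}}\cap L^{2}_{\epsilon_{2}}$ using a four-line sign check, and (3) deduction of isotropy and $[\cdot,\cdot]_{C}$-involutivity from the corresponding properties of $L^{j}_{\pm}$.
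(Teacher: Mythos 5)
Your proof is correct. Note that the paper does not actually prove this lemma: it is quoted from the references [1,7] (Gualtieri), and your argument is essentially the standard one found there — simultaneous diagonalization of the commuting pair $\mathfrak{J}_{1}=\mathfrak{J}$, $\mathfrak{J}_{2}=G\mathfrak{J}$, identification of each simultaneous eigenbundle as an intersection $L^{1}_{\epsilon_{1}}\cap L^{2}_{\epsilon_{2}}$ of maximal isotropic, Courant-involutive eigenbundles (your sign bookkeeping $V^{-}_{i}=L^{1}_{+}\cap L^{2}_{-}$, $V^{-}_{-i}=L^{1}_{-}\cap L^{2}_{+}$ is right), and then isotropy and involutivity inherited from the $L^{j}_{\pm}$. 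The only point worth making explicit in a write-up is that each intersection is a genuine smooth subbundle of constant rank, which follows from the fourfold direct-sum decomposition you obtain from $[G,\mathfrak{J}]=0$; with that remark the closure of sections under $[\cdot,\cdot]_{C}$ in each summand is exactly the formal argument you give.
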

\begin{proposition} 
Let $(G,\mathfrak{J})$ be a generalized K\"ahler structure. Then $\nabla^{\pm}\mathfrak{J}$ vanishes, hence $J_{\pm}$ are integrable and $\nabla^{+}J_{+}=\nabla ^{-}J_{-}=0$.
\end{proposition}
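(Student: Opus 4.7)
The approach is to combine the simultaneous four-fold eigenbundle decomposition of Lemma~8.5 with the explicit formula $\nabla^{+}_X v=([X^{-},v]_{C})_{+}$ of Definition~4.2. Since $(G,\mathfrak{J})$ is generalized Hermitian, $\mathfrak{J}$ commutes with $G$ and therefore preserves the splitting $E=V^{+}\oplus V^{-}$; after complexifying, $\mathfrak{J}|_{V^{+}}$ has $\pm i$-eigenbundles $V^{+}_{\pm i}$ and $\mathfrak{J}|_{V^{-}}$ has $\pm i$-eigenbundles $V^{-}_{\pm i}$, in the notation of Lemma~8.5. Hence $\nabla^{+}\mathfrak{J}=0$ reduces to showing that $\nabla^{+}$ preserves the refined splitting $V^{+}\otimes\mathbb{C}=V^{+}_{i}\oplus V^{+}_{-i}$, and analogously for $\nabla^{-}$.

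To prove $\nabla^{+}V^{+}_{i}\subset V^{+}_{i}$, I would fix $v\in V^{+}_{i}$ and $X\in C^{\infty}(TM)$, and decompose $X^{-}\in V^{-}\otimes\mathbb{C}$ as $X^{-}=X^{-}_{i}+X^{-}_{-i}$ with $X^{-}_{\pm i}\in V^{-}_{\pm i}$ (complex conjugates of each other). Then
$$\nabla^{+}_X v=\bigl([X^{-}_{i},v]_{C}\bigr)_{+} + \bigl([X^{-}_{-i},v]_{C}\bigr)_{+}.$$
A direct eigenvalue check (using $G=\pm 1$ on $V^{\pm}$) identifies the $+i$-eigenbundle of $\mathfrak{J}$ as $V^{+}_{i}\oplus V^{-}_{i}$ and that of $G\mathfrak{J}$ as $V^{+}_{i}\oplus V^{-}_{-i}$. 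Since both $\mathfrak{J}$ and $G\mathfrak{J}$ are Courant-integrable by the generalized K\"ahler hypothesis, these two bundles are Courant-involutive, so $[X^{-}_{i},v]_{C}\in V^{+}_{i}\oplus V^{-}_{i}$ and $[X^{-}_{-i},v]_{C}\in V^{+}_{i}\oplus V^{-}_{-i}$; each projection to $V^{+}$ therefore lands in $V^{+}_{i}$. Conjugating gives $\nabla^{+}V^{+}_{-i}\subset V^{+}_{-i}$, hence $\nabla^{+}\mathfrak{J}=0$. The symmetric computation with $\nabla^{-}_X v=([X^{+},v]_{C})_{-}$, decomposing $X^{+}$ along $V^{+}_{i}\oplus V^{+}_{-i}$ and invoking the same involutivity inputs for the appropriate eigenbundle sums, yields $\nabla^{-}\mathfrak{J}=0$.

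Transporting through the bundle isomorphism $\pi:V^{\pm}\to TM$ of Remark~8.4, the parallelism $\nabla^{\pm}\mathfrak{J}|_{V^{\pm}}=0$ translates immediately into $\nabla^{+}J_{+}=0$ and $\nabla^{-}J_{-}=0$, since by definition $J_{\pm}$ is the image of $\mathfrak{J}|_{V^{\pm}}$ through $\pi$. For the integrability of $J_{\pm}$ I would avoid deducing it from $\nabla^{\pm}J_{\pm}=0$ (the torsion of $\nabla^{\pm}$ obstructs this), and instead use that $\pi$ intertwines $[\cdot,\cdot]_{C}$ with the Lie bracket: for $v,w\in V^{+}_{i}$ one has $[\pi v,\pi w]=\pi[v,w]_{C}\in\pi(V^{+}_{i})$, which is the $+i$-eigenbundle of $J_{+}$ in $TM\otimes\mathbb{C}$; the Courant-involutivity of $V^{+}_{i}$ from Lemma~8.5 combined with Newlander--Nirenberg then gives integrability of $J_{+}$, and analogously for $J_{-}$. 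The main bookkeeping challenge is matching the four simultaneous eigenbundles against the $\pm i$-eigenbundles of $\mathfrak{J}$ and $G\mathfrak{J}$; once that identification is settled, the argument is purely algebraic.
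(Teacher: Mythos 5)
Your proposal is correct and follows essentially the same route as the paper: both arguments show that $\nabla^{\pm}_X v=([X^{\mp},v]_{C})_{\pm}$ preserves the $\pm i$-eigendecomposition of $\mathfrak{J}|_{V^{\pm}}$ by feeding the Courant-involutivity of the eigenbundles of $\mathfrak{J}$ and of $G\mathfrak{J}$ (equivalently, of the four simultaneous eigenbundles) into the bracket formula, and then transport the result through $\pi:V^{\pm}\rightarrow TM$ to get $\nabla^{+}J_{+}=\nabla^{-}J_{-}=0$ and the integrability of $J_{\pm}$. The only differences are cosmetic: you split $X^{\mp}$ into its eigencomponents and quote involutivity of the $+i$-eigenbundle of $G\mathfrak{J}$ where the paper runs the equivalent case analysis via $N_{G\mathfrak{J}}$, and you derive $J_{\pm}$-integrability from the involutivity of $V^{\pm}_{i}$ together with $\pi[v,w]_{C}=[\pi v,\pi w]$, which is a slightly more careful rendering of the paper's step $\pi([X^{+},Y^{+}]_{C})=[X,Y]$.
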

\begin{proof} 
We first prove the integrability of $J_{\pm}$ using integrability of $\mathfrak{J}$. Denote by $V_{i}$ and  $V_{-i}$ the eigenbundles of $\mathfrak{J}$. Let $X,Y$ be vector fields with $J_{+}X=iX$ and $J_{+}Y=iY$. Since $V_{i}$ is $[\cdot,\cdot]_{C}$-involutive, we have
$$\mathfrak{J}([X^{+},Y^{+}]_{C})=i[X^{+},Y^{+}]_{C}.$$ 
Noting that $\pi ([X^{+},Y^{+}]_{C})=[X,Y]$ (i.e. $[X^{+},Y^{+}]=[X,Y]^{+}$), it follows that:
$$(J_{+}[X,Y])^{+}=i[X,Y]^{+}$$
hence $J_{+}[X,Y]=i[X,Y]$, so $J_{+}$ is integrable. The same reasoning applies to $J_{-}$.
\\ Next we show $\nabla ^{\pm}\mathfrak{J}=0$. Let $v=Y^{+} \in V^{+}$ and compute
$$\mathfrak{J}(\nabla_{X}^{+}v)=\mathfrak{J}([X^{-},v]_{+})=\mathfrak{J}([X^{-},Y^{+}]_{+}).$$
If $X^{-},Y^{+}$ are sections of the same eigenbundle of $\mathfrak{J}$, then, since these eigenbundles are involutive, we have
$$\mathfrak{J}(\nabla_{X}^{+}v)=\nabla_{X}^{+}(\mathfrak{J}v).$$
Now consider $X^{-}\in C^{\infty}(V_{-i})$ and $Y^{+}\in C^{\infty}(V_{i})$. Because $(G,\mathfrak{J})$ is generalized K\"ahler, $G\mathfrak{J}$ is integrable, and so:
$$N_{G\mathfrak{J}}(X^{-},Y^{+})=2([iX^{-},iY^{+}]_{C}+\mathfrak{J}([-iX^{-},Y^{+}]_{C})_{+}+\mathfrak{J}([X^{-},iY^{+}]_{C})_{-})=0.$$ 
From this and the fact that $\mathfrak{J}(V^{\pm})\subset V^{\pm}$, we obtain:
$$\mathfrak{J}([X^{-},Y^{+}]_{+})=i[X^{-},Y^{+}]_{+}, \hspace{0.4cm} \mathfrak{J}([X^{-},Y^{+}]_{-})=-i[X^{-},Y^{+}]_{-}.$$
and therefore again $\mathfrak{J}(\nabla _{X}^{+}v)=\nabla _{X}^{+}(\mathfrak{J}v)$. The analogous argument for the remaining cases yields $\nabla ^{+}\mathfrak{J}=0$; similarly one obtains $\nabla ^{-}\mathfrak{J}=0$.
\\ From these equalities we get:
$$(J_{+}\nabla_{X}^{+}Y)^{+}=\mathfrak{J}(\nabla_{X}^{+}Y)^{+}=\mathfrak{J}(\nabla_{X}^{+}Y^{+})=\nabla_{X}^{+}\mathfrak{J}(Y^{+})=\nabla_{X}^{+}(J_{+}Y)^{+}=(\nabla_{X}^{+}J_{+}Y)^{+}$$ 
hence $J_{+}\nabla_{X}^{+}Y=\nabla_{X}^{+}J_{+}(Y)$. The same equalities hold for $J_{-}$, and thus \\ $\nabla ^{-}J_{-}=0$.
\end{proof}
\begin{oss}
If $(G,\mathfrak{J})$ is a generalized K\"ahler structure and $(db)^{1,1}=0$, then $\nabla ^{\pm}$ are the Chern connections for $J_{\pm}$, respectively. Indeed, for a generalized K\"ahler structure $(G,\mathfrak{J})$ we have $\nabla ^{\pm}g=0$ and, as shown above, $\nabla ^{+}J_{+}=\nabla ^{-}J_{-}=0$. If additionally the $(1,1)$-component of $db$ vanishes, then the $(1,1)$-component of the torsions of $\nabla ^{\pm}$ vanishes as well, so $\nabla ^{\pm}$ coincide with the Chern connections of $J_{+}$ e $J_{-}$.
\end{oss}
\begin{proposition}
Let $(\mathfrak{J},G)$ be a generalized Hermitian structure. Let $g$ be the Riemannian metric and $b$ the $2$-form induced by $G$ and let $J_{\pm}$ be the almost complex structures on $TM$ induced by $\mathfrak{J}$. Let $\tilde{\nabla}$ be the generalized connection induced by $\nabla ^{+}$. If $\nabla  ^{+}b=0$ then:
$$\tilde{\nabla }\mathfrak{J}=0 \iff \nabla ^{+}J_{\pm}=0.$$
\end{proposition}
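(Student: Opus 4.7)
My plan is to exploit Lemma 4.2.2, which equates the hypothesis $\nabla^{+}b=0$ with the condition $\tilde{\nabla}G=0$, together with the automatic identity $\nabla^{+}g=0$ from Proposition 4.2.1, in order to split the problem onto the two $G$-eigenbundles $V^{\pm}$ and reduce it to the usual covariant parallelism of $J_{\pm}$ on $TM$.

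As a first step I would show that, under the hypothesis, the eigenbundles $V^{\pm}$ are $\tilde{\nabla}$-parallel. Explicitly, for any $u=X+\xi\in C^{\infty}(E)$ and $Y\in C^{\infty}(TM)$, the defining formula of the induced generalized connection yields
\[
\tilde{\nabla}_{u}Y^{\pm}=\nabla^{+}_{X}Y\pm g(\nabla^{+}_{X}Y)+b(\nabla^{+}_{X}Y)=(\nabla^{+}_{X}Y)^{\pm},
\]
since $g$ and $b$ are both $\nabla^{+}$-parallel. In particular $\tilde{\nabla}_{u}$ acts on a section of $V^{\pm}$ only through $X=\pi(u)$, and it sends $V^{\pm}$ into $V^{\pm}$.

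Next I would use the Hermitian compatibility $G\mathfrak{J}=\mathfrak{J}G$, which forces $\mathfrak{J}$ to preserve $V^{\pm}$ and makes the defining identity $\mathfrak{J}X^{\pm}=(J_{\pm}X)^{\pm}$ available. Combining the two steps gives
\[
(\tilde{\nabla}_{u}\mathfrak{J})Y^{\pm}=\tilde{\nabla}_{u}(J_{\pm}Y)^{\pm}-\mathfrak{J}(\nabla^{+}_{X}Y)^{\pm}=(\nabla^{+}_{X}(J_{\pm}Y))^{\pm}-(J_{\pm}\nabla^{+}_{X}Y)^{\pm}=((\nabla^{+}_{X}J_{\pm})Y)^{\pm}.
\]
Since $E=V^{+}\oplus V^{-}$ and each extension $Y\mapsto Y^{\pm}$ is a bundle isomorphism $TM\cong V^{\pm}$, the tensor $\tilde{\nabla}\mathfrak{J}$ vanishes on all of $E$ if and only if $((\nabla^{+}_{X}J_{\pm})Y)^{\pm}=0$ for all $X,Y\in C^{\infty}(TM)$ and for both signs, i.e.\ $\nabla^{+}J_{+}=\nabla^{+}J_{-}=0$, establishing the equivalence in both directions.

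The main difficulty is not really an obstacle but a bookkeeping subtlety: one must be careful to distinguish the extensions $X\mapsto X^{\pm}$ from the projection $\pi\colon V^{\pm}\to TM$, and to exploit that $\tilde{\nabla}_{u}$ depends on $u$ only through $\pi(u)$. The key observation that $\tilde{\nabla}_{u}$ maps $V^{\pm}$ into itself relies crucially on $\nabla^{+}b=0$; without this hypothesis the clean identification $\tilde{\nabla}_{u}Y^{\pm}=(\nabla^{+}_{X}Y)^{\pm}$ fails and the defining relation $\mathfrak{J}X^{\pm}=(J_{\pm}X)^{\pm}$ can no longer be used to translate $\tilde{\nabla}\mathfrak{J}=0$ into an ordinary covariant parallelism of $J_{\pm}$ on $TM$.
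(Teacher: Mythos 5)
Your proof is correct and follows essentially the same route as the paper's: both use $\nabla^{+}g=\nabla^{+}b=0$ to obtain $\tilde{\nabla}_{u}Y^{\pm}=(\nabla^{+}_{X}Y)^{\pm}$ and the compatibility $\mathfrak{J}Y^{\pm}=(J_{\pm}Y)^{\pm}$ to reduce the parallelism of $\mathfrak{J}$ to that of $J_{\pm}$, eigenbundle by eigenbundle. Your invariant formulation via $(\tilde{\nabla}_{u}\mathfrak{J})Y^{\pm}=((\nabla^{+}_{X}J_{\pm})Y)^{\pm}$ and the isomorphisms $TM\cong V^{\pm}$ merely replaces the paper's explicit block-matrix computation and projection argument, making the converse direction a bit more transparent, but the underlying mechanism is the same.
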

\begin{proof} 
Write the generalized almost complex structure in block form:
$$\mathfrak{J}=\left[\begin{matrix}
H && \alpha \\ 
\beta && K \\ \end{matrix}\right].$$
For $v=X+\xi $ and $w=Y^{+} \in V^{+}$ one computes:
$$\tilde{\nabla }_{v}\mathfrak{J}Y^{+}=\nabla _{X}^{+}((H+\alpha g+\alpha b)(Y))+\nabla _{X}^{+}((\beta +Kg+Kb)(Y)),$$
$$\mathfrak{J}\tilde{\nabla }_{v}Y^{+}=H(\nabla _{X}^{+}Y)+\alpha (\nabla _{X}^{+}(gY+bY))+\beta (\nabla _{X}^{+}Y)+K(\nabla _{X}^{+}(gY+bY)).$$
From the definition of the induced structures and by using $\nabla ^{+}b=0$, we have
$$\tilde{\nabla }_{v}Y^{+}=\nabla _{X}^{+}Y+\nabla _{X}^{+}(gY+bY)=\nabla _{X}^{+}Y+(g+b)(\nabla _{X}^{+}Y)=(\nabla _{X}^{+}Y)^{+},$$
and therefore
$$\pi (\mathfrak{J}\tilde{\nabla }_{v}Y^{+})=\pi (\mathfrak{J}((\nabla_{X}^{+}Y)^{+}))=J_{+}(\nabla _{X}^{+}Y),$$
$$\pi (\tilde {\nabla }_{v}\mathfrak{J}Y^{+})=\nabla _{X}^{+}(H(Y)+\alpha (g(Y)+b(Y))=\nabla _{X}^{+}\pi (\mathfrak{J}Y^{+}))=\nabla _{X}^{+}(J_{+}Y).$$
Analogous equalities hold for $J_{-}$.
%$$\pi (\mathfrak{J}\tilde{\nabla }_{v}Y^{-})=\pi (\mathfrak{J}((\nabla _{X}^{+}Y)^{-}))=J_{-}(\nabla _{X}^{+}Y);$$
%$$ \pi (\tilde{\nabla}_{v}\mathfrak{J}Y^{-})=\nabla _{X}^{+}(J_{-}Y).$$
\\ $"\rightarrow "$ If $\mathfrak{J}$ is parallel with respect to $\tilde{\nabla}$, previous identities give:
$$J_{+}(\nabla _{X}^{+}Y)=\pi (\mathfrak{J}\tilde{\nabla }_{v}Y^{+})=\pi (\tilde {\nabla}_{v}\mathfrak{J}(Y^{+}))=\nabla _{X}^{+}(J_{+}Y),$$
$$J_{-}(\nabla _{X}^{+}Y)=\pi (\mathfrak{J}\tilde{\nabla }_{v}Y^{-})=$$
$$=\pi (\tilde {\nabla}_{v}\mathfrak{J}(Y^{-}))=\nabla _{X}^{+}(J_{-}Y),$$
hence $\nabla ^{+}J_{+}=\nabla ^{+}J_{-}=0$.
\\ $"\leftarrow "$ The converse follows by reversing the order of the equalities above: if $\nabla ^{+}J_{\pm}=0$ then $\tilde{\nabla}\mathfrak{J}=0$ on $V^{+}$ and $V^{-}$ and hence on all of $E$.
\end{proof}

\begin{proposition}
Let $(G,\mathfrak{J})$ be a generalized Hermitian structure. Let $g$ and $b$ be the metric and $2$-form induced by $G$ and let $J_{\pm}$ be the almost complex structures induced by $\mathfrak{J}$ on $TM$. If $\nabla^{+}b=0$, $\nabla ^{+}J_{\pm}=0$ and $db=0$, then both $G$ and $\mathfrak{J}$ are $[\cdot,\cdot]_{\nabla ^{+}}$-integrable.
\end{proposition}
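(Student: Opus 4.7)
The plan is to dispatch the two integrabilities separately, each as a corollary of machinery already established. For $G$, I would argue as follows. Since $db=0$, the formula $T^{\nabla^+}=-db$ (Proposition 4.2.1) gives $T^{\nabla^+}=0$, and Lemma 5.2.2 then delivers the $[\cdot,\cdot]_{\nabla^+}$-integrability of the auxiliary metric $G_g$ (recalling that $G_g$ is integrable with respect to $[\cdot,\cdot]_{\nabla^+}$ if and only if $db=0$). The hypothesis $\nabla^+b=0$ lets me apply Proposition 5.2.3: the transforms $e^{\pm b}$ act as $b$-fields for the bracket $[\cdot,\cdot]_{\nabla^+}$ precisely when $\nabla^+b=0$, and they transport the integrability of $G_g$ to $G=e^{-b}G_g e^b$.

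For $\mathfrak{J}$ the strategy is to combine parallelism of $\mathfrak{J}$ with torsion-freeness of $\tilde{\nabla}$ and then close with a short algebraic check. From $\nabla^+b=0$ and $\nabla^+ J_\pm=0$ the preceding proposition gives $\tilde{\nabla}\mathfrak{J}=0$, where $\tilde{\nabla}$ is the generalized connection induced by $\nabla^+$. Because $db=0$ implies that $\nabla^+$ is torsion-free, Lemma 4.3.1 yields $T^{\tilde{\nabla}}_{[\cdot,\cdot]_{\nabla^+}}=0$, so the bracket reduces to the clean identity
$$[v,w]_{\nabla^+}=\tilde{\nabla}_v w-\tilde{\nabla}_w v, \qquad \forall v,w\in C^\infty(E).$$

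Substituting this expression into the definition of $N_{\mathfrak{J}}^{[\cdot,\cdot]_{\nabla^+}}(v,w)$ and pushing $\mathfrak{J}$ past each covariant derivative via $\tilde{\nabla}_x(\mathfrak{J}y)=\mathfrak{J}(\tilde{\nabla}_x y)$, together with $\mathfrak{J}^2=-Id$, the eight resulting terms cancel in pairs: the cross-terms $\mathfrak{J}(\tilde{\nabla}_{\mathfrak{J}v}w)$ and $\mathfrak{J}(\tilde{\nabla}_{\mathfrak{J}w}v)$ kill each other immediately, while the remaining four collapse by $\mathfrak{J}^2=-Id$. This is the classical "torsion-free, parallel almost complex structure is integrable" argument, transplanted to the generalized setting through $\tilde{\nabla}$.

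I do not anticipate any genuine obstacle. The one step that deserves care is the lifting from $\nabla^+J_\pm=0$ to $\tilde{\nabla}\mathfrak{J}=0$ on the whole of $E$ (not only on $V^\pm$), but this is precisely the content of the preceding proposition and need not be redone. The present statement is therefore, in essence, a corollary combining the preceding parallelism proposition with Lemma 4.3.1, Lemma 5.2.2 and Proposition 5.2.3.
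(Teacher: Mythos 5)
Your proposal is correct and follows essentially the same route as the paper: the $G$-part is handled by combining $db=0$ (so $T^{\nabla^+}=-db=0$ and $G_g$ is $[\cdot,\cdot]_{\nabla^+}$-integrable) with the $b$-field argument under $\nabla^+b=0$, and the $\mathfrak{J}$-part by lifting $\nabla^+J_\pm=0$ to $\tilde{\nabla}\mathfrak{J}=0$ via the preceding proposition and then running the standard "parallel plus torsion-free implies vanishing Nijenhuis" cancellation, exactly as the paper does (it merely cancels the covariant-derivative terms first and kills the residual torsion terms at the end, whereas you suppress the torsion first). Only your lemma numbers are slightly shifted (the $db=0$ equivalence is Lemma 5.2.3 and the $b$-field transport is Proposition 5.2.1 in the paper's numbering), but the cited content is the right one.
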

\begin{proof}
The $[\cdot,\cdot]_{\nabla ^{+}}$-integrability of $G$ follows from Proposition 3.6.1. For $\mathfrak{J}$ we compute its Nijenhuis tensor with respect to $[\cdot,\cdot]_{\nabla ^{+}}$. 
\\ Let $v=X+\xi$, $w=Y+\eta \in C^{\infty}(E)$ and note that $\tilde{\nabla}\mathfrak{J}=0$, where $\tilde{\nabla}$ is the generalized connection induced by $\nabla ^{+}$. Then:
\\ $N_{\mathfrak{J}}^{\nabla ^{+}}(v,w)=\tilde{\nabla} _{\mathfrak{J}(v)}\mathfrak{J}(w)-\tilde{\nabla} _{\mathfrak{J}(w)}\mathfrak{J}(v) -T^{\tilde{\nabla}}_{[\cdot,\cdot]_{\nabla^{+}}}(\mathfrak{J}(v),\mathfrak{J}(w))-\mathfrak{J}(\tilde{\nabla}_{\mathfrak{J}(v)}w -\tilde{\nabla}_{w}\mathfrak{J}(v)+$
\\ $-T^{\tilde{\nabla}}_{[\cdot,\cdot]_{\nabla^{+}}}(\mathfrak{J}(v),w))-\mathfrak{J}(\tilde{\nabla}_{v}\mathfrak{J}(w) -\tilde{\nabla}_{\mathfrak{J}(w)}v-T^{\tilde{\nabla}}_{[\cdot,\cdot]_{\nabla^{+}}}(v,\mathfrak{J}(w))-\tilde {\nabla} _{v}w+\tilde{\nabla} _{w}v+T^{\tilde{\nabla}}_{[\cdot,\cdot]_{\nabla^{+}}}(v,w)=T^{\tilde{\nabla}}_{[\cdot,\cdot]_{\nabla^{+}}}(v,w)+\mathfrak{J}(T^{\tilde{\nabla}}_{[\cdot,\cdot]_{\nabla^{+}}}(v,\mathfrak{J}(w))+$
\\ $+\mathfrak{J}(T^{\tilde{\nabla}}_{[\cdot,\cdot]_{\nabla^{+}}}(\mathfrak{J}(v),w))-T^{\tilde{\nabla}}_{[\cdot,\cdot]_{\nabla^{+}}}(\mathfrak{J}(v),\mathfrak{J}(w))$.
\\ Recalling that $T_{[\cdot,\cdot]_{\nabla ^{+}}}^{\tilde{\nabla }}(X+\xi ,Y+\eta )=T^{\nabla ^{+}}(X,Y)$, it follows that if $db=0$, then the generalized connection $\tilde{\nabla }$ is symmetric with respect to $[\cdot,\cdot]_{\nabla ^{+}}$ and hence $\mathfrak{J}$ is $[\cdot,\cdot]_{\nabla ^{+}}$-integrable.
\end{proof}
\begin{corollary}
Let $(\mathfrak{J},G,G\mathfrak{J})$ be a generalized K\"ahler structure. If $\nabla ^{+}b=0$ and $db=0$, then $$\tilde{\nabla }G=\tilde{\nabla }\mathfrak{J}=\tilde{\nabla }G\mathfrak{J}=0,$$
and moreover $G$, $\mathfrak{J}$ and $G\mathfrak{J}$ are $[\cdot,\cdot]_{\nabla ^{+}}$-integrable.
\end{corollary}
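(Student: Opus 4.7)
The plan is to break the corollary into two halves: first, establish the three parallelism identities $\tilde{\nabla}G=\tilde{\nabla}\mathfrak{J}=\tilde{\nabla}(G\mathfrak{J})=0$, and then derive the $[\cdot,\cdot]_{\nabla^{+}}$-integrability of each of the three endomorphisms as a consequence.

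For the parallelism of $G$, I would apply Lemma 4.2.2 directly: the hypothesis $\nabla^{+}b=0$ is precisely the equivalent condition for $\tilde{\nabla}G=0$. For the parallelism of $\mathfrak{J}$, I would invoke Proposition 8.4.1 (valid because $(G,\mathfrak{J})$ is generalized K\"ahler), which yields $\nabla^{+}J_{+}=0$ and $\nabla^{-}J_{-}=0$. The identity $db=0$ enters here: by Proposition 5.2.1 together with the remark following it, $\nabla^{+}$ and $\nabla^{-}$ are both the Levi-Civita connection and coincide, so $\nabla^{+}J_{\pm}=0$. Having upgraded the parallelism condition for both induced almost complex structures on $TM$, Proposition 8.5.1 (applicable because $\nabla^{+}b=0$) gives $\tilde{\nabla}\mathfrak{J}=0$. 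For the composite, the Leibniz rule for the generalized connection $\tilde{\nabla}$ applied to the composition of endomorphisms of $E$ gives
\[
\tilde{\nabla}(G\mathfrak{J})=(\tilde{\nabla}G)\mathfrak{J}+G(\tilde{\nabla}\mathfrak{J})=0.
\]

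For the integrability part, the $[\cdot,\cdot]_{\nabla^{+}}$-integrability of $G$ and of $\mathfrak{J}$ follows immediately from Proposition 8.6.1, whose hypotheses ($\nabla^{+}b=0$, $\nabla^{+}J_{\pm}=0$, $db=0$) are exactly what we have verified. For $G\mathfrak{J}$, I would observe that the argument proving Proposition 8.6.1 uses only two inputs: that the structure in question is parallel with respect to $\tilde{\nabla}$, and that $db=0$ forces $T^{\nabla^{+}}=0$ and hence $T^{\tilde{\nabla}}_{[\cdot,\cdot]_{\nabla^{+}}}=0$. Since $G\mathfrak{J}$ is a generalized almost complex structure (by the definition of generalized K\"ahler and Remark 8.1.1) and is $\tilde{\nabla}$-parallel by the Leibniz computation above, the same Nijenhuis computation carried out in the proof of Proposition 8.6.1 collapses the four torsion terms to zero and yields $N_{G\mathfrak{J}}^{[\cdot,\cdot]_{\nabla^{+}}}=0$.

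I expect the most delicate step to be the upgrade from $\nabla^{+}J_{+}=\nabla^{-}J_{-}=0$ (what Proposition 8.4.1 actually provides) to the symmetric statement $\nabla^{+}J_{+}=\nabla^{+}J_{-}=0$ required by Proposition 8.5.1. This upgrade relies crucially on $db=0$ collapsing $\nabla^{+}$, $\nabla^{-}$, and $\nabla^{LC}$ into a single connection via Proposition 5.2.1, without which the parallelism of $J_{-}$ would only be available with respect to $\nabla^{-}$, not $\nabla^{+}$. Every other step is either a quotation of a named earlier result or a one-line Leibniz computation.
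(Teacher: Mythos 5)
Your proposal is correct and follows essentially the same route as the paper's proof, which likewise obtains $\tilde{\nabla}G=0$ from $\nabla^{+}b=0$, uses the generalized K\"ahler hypothesis with $db=0$ to get $\nabla^{+}J_{\pm}=0$ and hence $\tilde{\nabla}\mathfrak{J}=\tilde{\nabla}(G\mathfrak{J})=0$, quotes the preceding proposition for the $[\cdot,\cdot]_{\nabla^{+}}$-integrability of $G$ and $\mathfrak{J}$, and finishes with the same Nijenhuis computation for $G\mathfrak{J}$. You merely make explicit two steps the paper leaves implicit (the upgrade $\nabla^{-}J_{-}=0\Rightarrow\nabla^{+}J_{-}=0$ via $\nabla^{+}=\nabla^{-}=\nabla^{LC}$ when $db=0$, and the Leibniz argument for $\tilde{\nabla}(G\mathfrak{J})=0$), which is a faithful filling-in rather than a different approach.
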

\begin{proof} 
The claim follows from previous propositions: for a generalized K\"ahler structure with $\nabla ^{+}b=db=0$, we have $\nabla ^{+}J_{\pm}=0$, hence $\tilde{\nabla}G=\tilde{\nabla}\mathfrak{J}=0=\tilde{\nabla}G\mathfrak{J}$. A direct computation of the Nijenhuis tensor $N_{G\mathfrak{J}}^{[\cdot,\cdot]_{\nabla ^{+}}}$ shows that $G\mathfrak{J}$ is also $[\cdot,\cdot]_{\nabla ^{+}}$-integrable.
\end{proof}

\nocite{*}
\bibliographystyle{plain}
\bibliography{bibliografia}

\end{document}